\newif\ifpersonal
\newcolumntype{L}[1]{>{\raggedright\let\newline\\\arraybackslash\hspace{0pt}}m{#1}}
\newcolumntype{C}[1]{>{\centering\let\newline\\\arraybackslash\hspace{0pt}}m{#1}}
\newcolumntype{R}[1]{>{\raggedleft\let\newline\\\arraybackslash\hspace{0pt}}m{#1}}
\numberwithin{equation}{section}
\theoremstyle{plain}
\newtheorem{thm-intro}[equation]{Theorem}
\newtheorem{thm}[equation]{Theorem}
\newtheorem{lem}[equation]{Lemma}
\newtheorem{prop}[equation]{Proposition}
\newtheorem{conj}[equation]{Conjecture}
\newtheorem{cor}[equation]{Corollary}
\newtheorem{assumption}[equation]{Assumption}
\newtheorem*{rep@theorem}{\rep@title}
\newcommand{\newreptheorem}[2]{
\newenvironment{rep#1}[1]{
 \def\rep@title{#2 \ref{##1}}
 \begin{rep@theorem}}
 {\end{rep@theorem}}}
\theoremstyle{definition}
\newtheorem{defin}[equation]{Definition}
\newtheorem{notation}[equation]{Notation}
\newtheorem{rem}[equation]{Remark}
 \newtheorem{warning}[equation]{Warning}
\newtheorem{eg}[equation]{Example}
\newtheorem{construction}[equation]{Construction}
\newcommand{\personal}[1]{\textcolor[rgb]{0,0,1}{(Personal: #1)}}
\newcommand{\todo}[1]{\textcolor{red}{(Todo: #1)}}
\newcommand*{\personal}[1]{\ignorespaces}
\newcommand*{\todo}[1]{\ignorespaces}
\newcommand{\Z}{\mathbb Z}
\newcommand{\rB}{\mathrm B}
\newcommand{\rH}{\mathrm H}
\newcommand{\rK}{\mathrm K}
\newcommand{\rI}{\mathrm I}
\newcommand{\rR}{\mathrm R}
\newcommand{\fX}{\mathfrak X}
\newcommand{\cA}{\mathcal A}
\newcommand{\cB}{\mathcal B}
\newcommand{\cC}{\mathcal C}
\newcommand{\cD}{\mathcal D}
\newcommand{\cE}{\mathcal E}
\newcommand{\cF}{\mathcal F}
\newcommand{\cH}{\mathcal H}
\newcommand{\cG}{\mathcal G}
\newcommand{\cL}{\mathcal L}
\newcommand{\cO}{\mathcal O}
\newcommand{\cS}{\mathcal S}
\DeclareFontFamily{U}{BOONDOX-calo}{\skewchar\font=45 }
\DeclareFontShape{U}{BOONDOX-calo}{m}{n}{<-> s*[1.05] BOONDOX-r-calo}{}
\DeclareFontShape{U}{BOONDOX-calo}{b}{n}{<-> s*[1.05] BOONDOX-b-calo}{}
\DeclareMathAlphabet{\mathcalboondox}{U}{BOONDOX-calo}{m}{n}
\newcommand{\bbG}{\mathbb G}
\newcommand{\bbN}{\mathbb N}
\let\save@mathaccent\mathaccent
\newcommand*\if@single[3]{%
	\setbox0\hbox{${\mathaccent"0362{#1}}^H$}%
	\setbox2\hbox{${\mathaccent"0362{\kern0pt#1}}^H$}%
	\ifdim\ht0=\ht2 #3\else #2\fi
}
\newcommand*\rel@kern[1]{\kern#1\dimexpr\macc@kerna}
\newcommand*\widebar[1]{\@ifnextchar^{{\wide@bar{#1}{0}}}{\wide@bar{#1}{1}}}
\newcommand*\wide@bar[2]{\if@single{#1}{\wide@bar@{#1}{#2}{1}}{\wide@bar@{#1}{#2}{2}}}
\newcommand*\wide@bar@[3]{%
	\begingroup
	\def\mathaccent##1##2{%
		\let\mathaccent\save@mathaccent
		\if#32 \let\macc@nucleus\first@char \fi
		\setbox\z@\hbox{$\macc@style{\macc@nucleus}_{}$}%
		\setbox\tw@\hbox{$\macc@style{\macc@nucleus}{}_{}$}%
		\dimen@\wd\tw@
		\advance\dimen@-\wd\z@
		\divide\dimen@ 3
		\@tempdima\wd\tw@
		\advance\@tempdima-\scriptspace
		\divide\@tempdima 10
		\advance\dimen@-\@tempdima
		\ifdim\dimen@>\z@ \dimen@0pt\fi
		\rel@kern{0.6}\kern-\dimen@
		\if#31
		\overline{\rel@kern{-0.6}\kern\dimen@\macc@nucleus\rel@kern{0.4}\kern\dimen@}%
		\advance\dimen@0.4\dimexpr\macc@kerna
		\let\final@kern#2%
		\ifdim\dimen@<\z@ \let\final@kern1\fi
		\if\final@kern1 \kern-\dimen@\fi
		\else
		\overline{\rel@kern{-0.6}\kern\dimen@#1}%
		\fi
	}%
	\macc@depth\@ne
	\let\math@bgroup\@empty \let\math@egroup\macc@set@skewchar
	\mathsurround\z@ \frozen@everymath{\mathgroup\macc@group\relax}%
	\macc@set@skewchar\relax
	\let\mathaccentV\macc@nested@a
	\if#31
	\macc@nested@a\relax111{#1}%
	\else
	\def\gobble@till@marker##1\endmarker{}%
	\futurelet\first@char\gobble@till@marker#1\endmarker
	\ifcat\noexpand\first@char A\else
	\def\first@char{}%
	\fi
	\macc@nested@a\relax111{\first@char}%
	\fi
	\endgroup
}
\newcommand{\Sh}{\mathrm{Sh}}
\newcommand{\Ab}{\mathrm{Ab}}
\newcommand{\DAb}{\cD(\Ab)}
\newcommand{\tauet}{\tau_\mathrm{\acute{e}t}}
\newcommand{\Mod}{\mathrm{Mod}}
\newcommand{\CAlg}{\mathrm{CAlg}}
\newcommand{\Cat}{\mathrm{Cat}}
\newcommand{\dAff}{\mathrm{dAff}}
\newcommand{\bfMap}{\mathbf{Map}}
\newcommand{\cHom}{\cH \mathrm{om}}
\newcommand{\cEnd}{\cE\mathrm{nd}}
\newcommand{\PrL}{\mathcal P \mathrm{r}^{\mathrm{L}}}
\newcommand{\Perf}{\mathrm{Perf}}
\newcommand{\QCoh}{\mathrm{QCoh}}
\newcommand{\dSt}{\mathrm{dSt}}
\newcommand{\bfPerf}{\mathbf{Perf}}
\newcommand{\Ind}{\mathrm{Ind}}
\newcommand{\PrLomega}{\mathcal P \mathrm r^{\mathrm L, \omega}}
\newcommand{\bbS}{\mathbb S}
\newcommand{\dAz}{\mathsf{dAz}}
\newcommand{\dBr}{\mathrm{dBr}}
\newcommand{\FunL}{\mathrm{Fun}^{\mathrm L}}
\newcommand{\NcSmPr}{\mathsf{SmPr}^{\mathsf{cat}}}
\newcommand{\PrLcg}{\mathcal P \mathrm r^{\mathrm L, \mathrm{cg}}}
\newcommand{\et}{_\mathrm{\acute{e}t}}
\newcommand{\inv}{^{-1}}
\newcommand{\id}{\mathrm{id}}
\newcommand{\op}{^\mathrm{op}}
\newcommand{\Cech}{\check{\mathcal C}}
\tikzset{
  closed/.style = {decoration = {markings, mark = at position 0.5 with { \node[transform shape, xscale = .8, yscale=.4] {/}; } }, postaction = {decorate} },
  open/.style = {decoration = {markings, mark = at position 0.5 with { \node[transform shape, scale = .7] {$\circ$}; } }, postaction = {decorate} }
}
\DeclareMathOperator{\Fun}{Fun}
\DeclareMathOperator{\Map}{Map}
\DeclareMathOperator{\Pic}{Pic}
\DeclareMathOperator{\Sp}{Sp}
\DeclareMathOperator{\Spec}{Spec}
\DeclareMathOperator{\Spf}{Spf}
\DeclareMathOperator*{\colim}{colim}
\begin{document}
	
\title{Descent problems for derived Azumaya algebras}

\author{Federico Binda}

\author{Mauro Porta}

\date{\today}


\maketitle

\personal{PERSONAL COMMENTS ARE SHOWN!!!}

\begin{abstract}
	This paper is dedicated to a further study of derived Azumaya algebras.
	The first result we obtain is a Beauville-Laszlo-style property for such objects (considered up to Morita equivalence), which is consequence of a more general Beauville-Laszlo kind of statement for quasi-coherent sheaves of categories.
	Next, we prove that given any (derived) scheme $X$, proper over the spectrum of a quasi-excellent Henselian ring, the derived Brauer group of $X$ injects into the one of the Henselization of $X$ along the base, generalizing a classical result of Grothendieck and a more recent theorem of Geisser-Morin.
	As a separate application, we deduce that Grothendieck's existence theorem holds for the stable $\infty$-categories of twisted sheaves even when the corresponding $\bbG_m$-gerbe does not satisfy the resolution property, offering an improvement of a result of Alper, Rydh and Hall.
\end{abstract}

\tableofcontents

\section{Introduction}
Let $X$ be a scheme.
The  Brauer-Grothendieck group of $X$, i.e. the second étale cohomology group  $\rH^2\et(X, \bbG_m)$ of the multiplicative group $\bbG_m$, is an important arithmetico-geometric invariant of the scheme: for example,  it can be used to measure obstructions to various Hasse principles for zero-cycles and rational points, thanks to the Brauer-Manin pairing; or it serves, in the form of the  unramified Brauer group, as a useful birational invariant, as shown in the construction, due to Artin and Mumford, of first example of a variety that is unirational but not rational (a counterexample to Luroth's problem in dimension 3, see \cite{Beauville_Luroth} for a survey and \cite{Colliot_Thelene_Brauer_Grothendieck_group}).  

\medskip

As it is often the case for \'etale cohomology groups, it is somewhat difficult to get a concrete grasp on the geometric meaning of classes in $\rH^2\et(X, \bbG_m)$.
Nevertheless, in the special case of the multiplicative group it is suggestive to consider the following table:

\vspace{10pt}

\begin{center}
\begin{tabular}{C{3cm}|C{3cm}|C{3cm}|C{3cm}}
	$n$ & 0 & 1 & 2 \\
	\hline
	Geometric meaning of $\rH^n\et(X,\bbG_m)$ & Invertible algebraic functions on $X$ & Invertible coherent sheaves on $X$ & ?
\end{tabular}
\end{center}

\medskip

A first attempt at filling this table has been done by Grothendieck: building on earlier works of Auslander and Goldman, he introduced In \cite[Exposé VI]{Grothendieck_Dix_expose} the notion of sheaf of Azumaya algebras on $X$: a vector bundle equipped with an associative multiplication satisfying a certain \emph{invertibility} property, generalizing the notion of central simple algebra over a field.
When considered up to Morita equivalence, Azumaya algebras form a group $\mathrm{Br}_{\mathrm{Az}}(X)$, known as the \emph{Brauer-Azumaya group} of $X$ (following e.g.\ \cite{Colliot_Thelene_Brauer_Grothendieck_group}).
Grothendieck provided a natural injective map
\[ \mathrm{Br}_{\mathrm{Az}}(X) \longrightarrow \rH^2\et(X,\bbG_m) \]
which, however, is far from being surjective in general.
Indeed, the classical Skolem-Noether theorem implies that it can only hit torsion classes, whereas D.\ Mumford constructed an example of a singular surface $S$ whose $\rH^2\et(S,\bbG_m)$ contains a non-torsion classes (on the other hand O.\ Gabber showed that the above map is surjective on torsion classes, at least when $X$ is quasi-projective).\\

Much later, the advent of homotopical techniques allowed to solve this problem in a completely satisfactory way.
In his groundbreaking work \cite{Toen_Azumaya}, B.\ Toën introduced derived Morita theory.
In a nutshell, objects of derived Morita theory are (sheaves of) $\infty$-categories equipped with an action of the derived $\infty$-category $\QCoh(X)$.
However, these $\infty$-categories are not considered up to equivalence, but only up to \emph{Morita equivalence} (two $\QCoh(X)$-linear stable $\infty$-categories are Morita equivalent if there is a $\QCoh(X)$-linear equivalence between their categories of $\QCoh(X)$-valued presheaves, generalizing the classical Morita equivalence between rings, defined by looking at equivalences between the corresponding category of modules).
In this paper we denote by $\PrLomega_X$ the resulting $\infty$-category.
A key result of To\"en (later revisited by J.\ Lurie) is that $\PrLomega_X$ admits a symmetric monoidal structure.
Building on this theory, in \cite{Toen_Azumaya}, B.\ Toën proved that every cohomology class in $\rH^2\et(X;\bbG_m)$ can be represented by an invertible object in $\PrLomega_X$, thus completing the above table.
He actually went much further than that: replacing vector bundles by perfect complexes, he adapted the original definition of Grothendieck of sheaf of Azumaya algebras, following the general philosophy of derived geometry. He called the resulting class of objects \emph{derived Azumaya algebras} and proved that, when considered up to Morita equivalence, they coincide exactly with the invertible objects of $\PrLomega$.
The resulting group is nowadays called the \emph{derived Brauer group} and denoted $\mathrm{dBr}(X)$.
This group contains the classical Brauer group  $\rH^2\et(X;\bbG_m)$ of $X$ as a summand, but it has better formal properties.\\

The goal of this work is to exploit once more Toën's categorical framework in order to deduce new results about the Brauer-Grothendieck group even for classical schemes. We are interested in particular in problems related to \emph{descent questions} for  $\rH^2\et(X;\bbG_m)$ with respect to certain naturally occurring colimits, that we will solve by considering the corresponding problem for ``quasi-coherent'' sheaves of \emph{categories}.
We will consider two different kinds of situations:
\begin{enumerate}\itemsep=0.2cm
	\item the Beauville-Laszlo problem for $\rH^2\et(X;\bbG_m)$, and
	
	\item the formal GAGA problem for $\rH^2\et(X;\bbG_m)$,
\end{enumerate}
that we now describe in details.
\subsection{Beauville-Laszlo}
Our first result is about the categorification of the classical Beauville-Laszlo theorem \cite{Beauville-Laszlo}, which asserts that a quasi-coherent sheaf on an affine scheme $X$ can be obtained by patching a quasi-coherent sheaf on the open complement $U:=X-Z$ of a Cartier divisor $Z$ with a sheaf on the completion $\hat{X}_Z$, provided that they agree along the punctured tubular neighborhood $\hat{{X}}_Z-Z$. In fact, following \cite[1.3]{Bhatt_algebraization_2014}, we can consider more generally a pullback square
\[ \begin{tikzcd}
	V \arrow{r} \arrow{d} & U \arrow{d} \\
	T \arrow{r}{f} & S \ ,
\end{tikzcd} \]
where $S$ and $T$ are derived affine schemes and $U$ is the open complement of a closed subset of $S = \Spec(A)$ cut out by a finitely generated ideal $I \subseteq \pi_0(A)$.

Assume that  $f$ induces an equivalence between $I$-completions. Then we have:

\begin{thm}[See \cref{thm:Beauville_Laszlo_omega}]\label{thm-intro:Beauville_Laszlo}
	The canonical map
	\[ \PrLomega_S \longrightarrow \PrLomega_U \times_{\PrLomega_V} \PrLomega_T \]
	is a symmetric monoidal equivalence.
	In particular, the map
	\[ \dAz_S \longrightarrow \dAz_U \times_{\dAz_V} \dAz_T \]
	is an equivalence as well.
	We thus obtain a long exact sequence
	\begin{center}
		\begin{tikzpicture}[descr/.style={fill=white,inner sep=1.5pt}]
			\matrix (m) [
			matrix of math nodes,
			row sep=1em,
			column sep=2.5em,
			text height=1.5ex, text depth=0.25ex
			]
			{ 0 & \cO(S)^\times & \cO(U)^\times \oplus \cO(T)^\times & \cO(V)^\times \\
				& \mathrm{dPic}(S) & \mathrm{dPic}(U) \oplus \mathrm{dPic}(T) & \mathrm{dPic}(V) \\
				& \dBr(S) & \dBr(U) \oplus \dBr(T) & \dBr(V) \\
			};
			
			\path[overlay,->, font=\scriptsize,>=latex]
			(m-1-1) edge (m-1-2)
			(m-1-2) edge (m-1-3)
			(m-1-3) edge (m-1-4)
			(m-1-4) edge[out=355,in=175] node[descr,yshift=0.3ex] {$\delta_{0}$} (m-2-2)
			(m-2-2) edge (m-2-3)
			(m-2-3) edge (m-2-4)
			(m-2-4) edge[out=355,in=175] node[descr, yshift=0.3ex]{$\delta_1$} (m-3-2)
			(m-3-2) edge (m-3-3)
			(m-3-3) edge (m-3-4);
		\end{tikzpicture}
	\end{center}
\end{thm}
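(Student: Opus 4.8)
The first assertion is \cref{thm:Beauville_Laszlo_omega} --- the ``Beauville--Laszlo descent for quasi-coherent sheaves of categories'' announced in the abstract --- and the plan is to bootstrap it from the Beauville--Laszlo theorem for quasi-coherent \emph{sheaves}, which under our hypotheses (following \cite[1.3]{Bhatt_algebraization_2014}) provides a symmetric monoidal equivalence $\QCoh(S)\simeq\QCoh(U)\times_{\QCoh(V)}\QCoh(T)$. Using $\PrLomega_W\simeq\Mod_{\QCoh(W)}(\PrLomega)$, the functor in question sends a $\QCoh(S)$-linear category $\cC$ to $(\cC_U,\cC_T,\phi)$, where $\cC_W:=\cC\otimes_{\QCoh(S)}\QCoh(W)$ and $\phi$ is the tautological identification over $V$; the candidate inverse is $(\cC_U,\cC_T,\phi)\mapsto\cC_U\times_{\cC_V}\cC_T$. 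One is thereby reduced to showing that, for every $\cC\in\PrLomega_S$, the base-change map $\cC\to\cC_U\times_{\cC_V}\cC_T$ is an equivalence. I would organise this through recollements rather than a naive limit-preservation argument (tensoring does not preserve limits): $\QCoh(S)$ sits in a localization sequence with the $I$-torsion subcategory $\QCoh_Z(S)$ and the open localization $\QCoh(U)$, and $\QCoh(T)$ likewise with $\QCoh_{f^{-1}(Z)}(T)$ and $\QCoh(V)$; since Verdier quotients are colimits in $\PrLomega$, tensoring with $\cC$ preserves both localization sequences, so $\cC_U\simeq\cC/\cC_Z$ and $\cC_V\simeq\cC_T/\cC_Z'$ with $\cC_Z:=\cC\otimes_{\QCoh(S)}\QCoh_Z(S)$ and $\cC_Z':=\cC_T\otimes_{\QCoh(T)}\QCoh_{f^{-1}(Z)}(T)$. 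The hypothesis that $f$ induces an equivalence on $I$-completions yields a $\QCoh(S)$-linear identification $\QCoh_Z(S)\simeq\QCoh_{f^{-1}(Z)}(T)$ --- torsion sheaves depend only on the formal neighbourhood, which $S$ and $T$ share --- hence $\cC_Z\simeq\cC_Z'$; moreover the recollement gluing functor for $\QCoh(S)$ factors through restriction to the punctured neighbourhood $V$ (local cohomology is computed formally), so the gluing functor for $\cC$ is the composite of $\cC_U\to\cC_V$ with the one for $\cC_T$. A formal ``gluing of a composite'' manipulation then gives $\cC\simeq\cC_U\times_{\cC_V}\cC_T$, and monoidality is automatic since every functor in sight is $\QCoh(S)$-linear and $\otimes$ on $\PrLomega_W$ is the relative Lurie tensor product. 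I expect the main obstacle to be precisely this categorified descent: upgrading all of the recollement data to $\QCoh(S)$-linear, symmetric-monoidal statements \emph{inside} $\PrLomega$ (preservation of compact generation, compatibility with the Lurie tensor product), together with the $\QCoh(S)$-linear completion comparison in the stated derived generality.

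Granting the symmetric monoidal equivalence $\PrLomega_S\simeq\PrLomega_U\times_{\PrLomega_V}\PrLomega_T$, the assertion about $\dAz$ is formal. Indeed $\dAz_W$ is the $\infty$-groupoid $\Pic(\PrLomega_W)$ of $\otimes$-invertible objects of the symmetric monoidal $\infty$-category $\PrLomega_W$ (To\"en's identification of derived Azumaya algebras up to Morita equivalence), and $\Pic\colon\CAlg(\widehat{\Catinf})\to\cS$ preserves limits: an invertible object of a limit of symmetric monoidal $\infty$-categories is a compatible family of invertible objects, whose componentwise inverses assemble into an inverse. Applying $\Pic$ to the pullback square gives $\dAz_S\simeq\dAz_U\times_{\dAz_V}\dAz_T$.

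Finally, for the long exact sequence: $\dAz_W$ is a grouplike $\bbE_\infty$-space, hence the infinite loop space of a connective spectrum $\mathfrak{Br}(W)$ with $\pi_0\mathfrak{Br}(W)=\dBr(W)$, $\pi_1\mathfrak{Br}(W)=\mathrm{dPic}(W)$ (as $\Omega\,\dAz_W$ is the Picard space of $\QCoh(W)$), and $\pi_2\mathfrak{Br}(W)=\cO(W)^\times$ (as that space is the units space $\mathrm{GL}_1(\cO(W))$), while $\pi_{\ge 3}\mathfrak{Br}(W)$ is the higher homotopy of $\cO(W)$, trivial when $W$ is classical. Since the equivalence between grouplike $\bbE_\infty$-spaces and connective spectra preserves limits, the pullback square of $\dAz$'s lifts to a homotopy pullback square of connective spectra, equivalently to a Mayer--Vietoris fiber sequence
\[ \mathfrak{Br}(S)\longrightarrow\mathfrak{Br}(U)\oplus\mathfrak{Br}(T)\longrightarrow\mathfrak{Br}(V), \]
and the displayed six-term sequence is the low-degree ($\le 2$) part of its long exact sequence of homotopy groups. (The leading zero expresses the injectivity of $\cO(S)^\times\to\cO(U)^\times\oplus\cO(T)^\times$, immediate in the classical case from $\pi_{\ge 3}\mathfrak{Br}(V)=0$; in the general derived case one appends to the left the long exact sequence of the structure-sheaf equivalence $\cO(S)\simeq\cO(U)\times_{\cO(V)}\cO(T)$ obtained by evaluating the Beauville--Laszlo equivalence at the unit object.)
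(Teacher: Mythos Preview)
Your treatment of the two corollaries (the $\dAz$ equivalence via $\Pic$ preserving limits, and the long exact sequence via the Mayer--Vietoris fiber sequence of Brauer spectra) is correct and is exactly what the paper has in mind.

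For the main equivalence, however, there is a genuine gap. You write that one is ``thereby reduced to showing that, for every $\cC\in\PrLomega_S$, the base-change map $\cC\to\cC_U\times_{\cC_V}\cC_T$ is an equivalence.'' This only establishes that the unit of the putative adjunction is an equivalence, i.e.\ full faithfulness of $\Phi^\omega$. You have not addressed the counit: given an arbitrary gluing datum $(\cC_U,\cC_T,\phi)$, why is $(\cC_U\times_{\cC_V}\cC_T)\otimes_{\QCoh(S)}\QCoh(U)\simeq\cC_U$ and similarly over $T$? More seriously, why is $\cC_U\times_{\cC_V}\cC_T$ even compactly generated? This last point is the technical heart of the paper's argument (its \cref{prop:Thomason_trick}), and it is handled by a Thomason-style trick: one constructs compact generators in the fiber product by taking compact generators of $(\cC_T)_K$ (the torsion part, which lifts for free) together with compact generators of $\cC_U$ of the form $G\oplus G[1]$, which can then be lifted to $\cC_T$ via Thomason's trick on the semiorthogonal decomposition $(\cC_T)_K\subset\cC_T\to\cC_V$. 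You flag preservation of compact generation as a likely obstacle, but do not propose a mechanism to overcome it; without this the candidate inverse does not even land in $\PrLomega_S$.

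On the method for full faithfulness: your instinct that ``tensoring does not preserve limits'' leads you to the recollement route, but the paper's observation is precisely that tensoring with a \emph{dualizable} object does preserve limits. Since $\QCoh(S)$ is rigid and compactly generated, every $\cC\in\PrLomega_S$ is dualizable in $\PrL_S$, so the pullback square $\QCoh(S)\simeq\QCoh(U)\times_{\QCoh(V)}\QCoh(T)$ survives $\cC\otimes_{\QCoh(S)}(-)$. This dispatches the unit in one line (\cref{lem:Beauville_Laszlo_full_faitfhulness}) and is considerably simpler than the recollement comparison you sketch; your ``gluing of a composite'' step is also left vague and would need to be made precise. The paper does use the semiorthogonal decomposition $((\cC_T)_K,\cC_V)$ of $\cC_T$---close in spirit to your recollement idea---but for the essential surjectivity and compact generation arguments, not for full faithfulness.
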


In the above statement we wrote $\mathrm{dPic}(X)$ to denote the group of invertible objects in the stable $\infty$-category $\QCoh(X)$.
In other words,
\[ \mathrm{dPic}(X) \simeq \Pic(X) \oplus \rH^0\et(X;\Z) \ , \]
where the second component classifies the shifted structure sheaf $\cO_X[n]$ of $X$.

\medskip

Notice that the stack $\rK(\bbG_m,2)$ is \emph{not} Tannakian.
Therefore, one cannot simply deduce \cref{thm-intro:Beauville_Laszlo} from the Beauville-Laszlo theorem for perfect complexes (proven for instance in \cite[Theorem 7.4.0.1]{Lurie_SAG}).
One should rather think of \cref{thm-intro:Beauville_Laszlo} as an advance in the theory of descent, that goes beyond all statements that can be formally deduced from $\bfPerf$.
This theorem is akin to Toën's \cite[Theorem 0.2]{Toen_Azumaya}, that asserts the fpqc-local nature of compact generators, but relies also on the general theory of complete, local and nilpotent modules developed by Lurie in \cite[\S7]{Lurie_SAG}.

\subsection{Formal GAGA}
The second kind of colimit that we consider arises from the following geometric situation: let $S = \Spec(R)$ be the spectrum of a commutative ring which is complete with respect to a finitely generated ideal $I \subseteq \pi_0(R)$, and let $p \colon X \to S$ be a proper $S$-scheme.
For $i \geqslant 0$, set $S_i \coloneqq \Spec(R / I^{i+1})$ and $X_i \coloneqq S_i \times_S^{\mathrm d} X$. Let $\mathfrak{X}$ denote the formal completion of $X$ along $S_0 \times_S X$, i.e. $\mathfrak{X}=\colim_i X_i$.
For every positive integer $n$, we can consider the comparison map
\begin{equation}\label{eq:GAGA_comparison_map}
	\rH^n\et(X;\bbG_m) \longrightarrow \lim_i \rH^n\et(X_i;\bbG_m) \ .
\end{equation}
When $n = 0$ or $n = 1$, this map is an equivalence.
This is the consequence of Grothendieck's existence theorem and the equivalence $\rH^1\et(X;\bbG_m) \simeq \Pic(X)$.
However, already for $n = 2$, the situation is more complex.

\medskip

Indeed, in \cite[Exposé VI]{Grothendieck_Dix_expose} Grothendieck himself studied this problem.
He proved injectivity of \eqref{eq:GAGA_comparison_map} under the following two key assumptions:
\begin{itemize}\itemsep=0.2cm
	\item $X$ is regular and the map $p \colon X \to S$ is flat;
	
	\item the vanishing $\lim^1_i \Pic(X_i) \simeq 0$ holds;
\end{itemize}
This result can be improved using \emph{continuous cohomology} \cite{Jannsen_Continuous}, which is the \'etale cohomology $\rH^n\et(\fX;\bbG_m)$ and which can be explicitly defined as
\[ \rH^n\et(\fX,\bbG_m) \simeq \rH^n\Big( \lim_i \rR \Gamma\et(X_i, \bbG_m) \Big) , \]
where the limit inside the parentheses is understood in the $\infty$-categorical way (i.e.\ it is a homotopy limit).
Then \eqref{eq:GAGA_comparison_map} can be factored as
\[ \rH^2\et(X;\bbG_m) \longrightarrow \rH^2\et(\fX;\bbG_m) \longrightarrow \lim_i \rH^2\et(X_i;\bbG_m) \ . \]
In \cite[Theorem 7.2]{Geisser_Morin_Kernel_Brauer_Manin} it was shown that the first map is injective when $X$ is regular and flat over $\Z_p$.

\medskip

To\"en's framework of derived Morita theory allows to treat this problem in a much greater generality.
Representing classes in $\mathrm{dBr}(X)$ as invertible objects in the Morita theory $\PrLomega_X$ of $X$, one is naturally led to consider the natural comparison map
\[ \PrLomega_X \longrightarrow \lim_i \PrLomega_{X_i} . \]
Full faithfulness of this functor would immediately imply the injectivity of $\mathrm{dBr}(X) \to \mathrm{dBr}(\fX)$.
Although this is too much to hope for (just as one cannot expect Grothendieck's existence theorem to hold for arbitrary quasi-coherent sheaves), we can restrict to compact objects in $\PrLomega_X$.
These can be understood as sheaves of \emph{smooth and proper} stable $\infty$-categories with a coherent action of $\QCoh(X)$.
Denoting by $\mathsf{SmPr}^{\mathsf{cat}}(X)$ (resp.\ $\dAz^{\mathrm{cat}}(X)$) the full subcategory of $\PrLomega_X$ spanned by smooth and proper (resp.\ invertible) categories, we can now state our second main theorem:

\begin{thm}[{See \cref{thm:GAGA_Morita} and \cref{cor:injectivity_derived_Brauer_complete_base}}] \label{thm:GAGA_Introduction}
	Let $S = \Spec(R)$ be the spectrum of a Noetherian ring which is complete with respect an ideal $I$.
	Let $X \to S$ be a proper $S$-scheme.
	Then:
	\begin{enumerate}\itemsep=0.2cm
		\item The natural symmetric monoidal functor
		\begin{equation} \label{eq:map_Brauer_intro_categorification}
			\NcSmPr(X) \longrightarrow \NcSmPr(\fX) \coloneqq \lim_{i \in \bbN} \NcSmPr(X_i) .
		\end{equation}
		is fully faithful. In particular it induces a fully faithful functor on the subcategory of invertible objects 
		\[ \dAz^{\mathrm{cat}}(X) \longrightarrow \dAz^{\mathrm{cat}}(\mathfrak X) \coloneqq \lim_{i \in \bbN} \dAz^{\mathrm{cat}}(X_i) .\]
		
		\item \emph{(Formal injectivity)} The natural map
		\[ \dBr(X) \longrightarrow \dBr(\mathfrak X) \]
		is injective. In particular, the  group homomorphism $\rH^2\et(X, \bbG_m) \longrightarrow \rH^2\et(\mathfrak{X}, \bbG_m)$ is injective.
		
		\item Asssume that $R$ is a noetherian complete local ring. Let $\ell$ be a prime different from the residue characteristic.
		Then there exists a short exact sequence
		\[ 0 \longrightarrow \Pic(X) / \ell \longrightarrow \lim_i (\Pic(X_i) / \ell) \stackrel{\rho}{\longrightarrow} \lim_i^1 \Pic(X_i) \]
		such that
		\[ \mathrm{Im}(\rho) \simeq \ker\big( \mathrm{Br}(X)[\ell] \to \lim_i \mathrm{Br}(X_i) \big) \ . \]
	\end{enumerate}
	The conclusion of (2) holds if $S$ is assumed to be the spectrum of a local Noetherian Henselian ring $R$ such that the canonical map $R \to\lim_n R / \mathfrak m^n$  has regular geometric fibers. 
\end{thm}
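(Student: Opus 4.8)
\emph{Part (1) is the crux}, and parts (2), (3) and the Henselian addendum will follow from it essentially formally. Since the functor in \eqref{eq:map_Brauer_intro_categorification} is symmetric monoidal and full faithfulness is a statement about mapping spaces, the plan is to reduce to showing that for smooth and proper $\QCoh(X)$-linear categories $\cC,\cD$ the comparison map
\[ \Map_{\NcSmPr(X)}(\cC,\cD)\longrightarrow \lim_{i\in\bbN}\Map_{\NcSmPr(X_i)}(\cC_i,\cD_i),\qquad \cC_i\coloneqq\cC\otimes_{\QCoh(X)}\QCoh(X_i), \]
is an equivalence of spaces. Because $\cC$ is smooth and proper over $\QCoh(X)$, the internal-hom category $\cE\coloneqq\FunL_{\QCoh(X)}(\cC,\cD)\simeq\cC^\vee\otimes_{\QCoh(X)}\cD$ is again smooth and proper, its formation commutes with base change (so $\cE\otimes_{\QCoh(X)}\QCoh(X_i)$ recovers the $i$-th internal hom), and the mapping space above is the maximal $\infty$-groupoid of the small $\Perf(X)$-linear category $\cE^{c}$ of compact objects, whose mapping objects lie in $\Perf(X)$ by properness. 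So everything reduces to a formal GAGA statement for \emph{sheaves of categories}: for $\cE$ smooth and proper over $\QCoh(X)$, with $X$ proper over the complete Noetherian $R$, the functor $\cE\to\lim_i\cE\otimes_{\QCoh(X)}\QCoh(X_i)$ is an equivalence. I would prove this by running Grothendieck's existence theorem \emph{with a category of coefficients}: $\cE^c$ is assembled from objects, mapping complexes and compositions living in $\Coh(X)$, and the $\infty$-categorical existence equivalences $\Coh(X)\xrightarrow{\sim}\lim_i\Coh(X_i)$ and $\Perf(X)\xrightarrow{\sim}\lim_i\Perf(X_i)$ (as in \cite[\S8.5]{Lurie_SAG}) descend this data along the tower $\{X_i\}$; smoothness of $\cE$ then ensures the descended category is compactly generated with the expected compacts, so $\Ind$-completing recovers $\cE$. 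This is the step I expect to be the main obstacle: one must run the existence theorem \emph{uniformly in the coefficient category} and control the interaction of smoothness and properness with the non-exact base change $(-)\otimes_{\QCoh(X)}\QCoh(X_i)$, crucially without invoking any resolution property on $X$ or on a gerbe over it — which is precisely what makes the statement stronger than anything formally deducible from $\bfPerf$. Passing to invertible objects is then automatic, as full faithfulness descends to full subcategories and preserves invertibility.

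\emph{Part (2)} follows formally. By To\"en's theory, every invertible object of $\PrLomega_X$ is dualizable, and in fact smooth and proper over $\QCoh(X)$ (a derived Azumaya algebra is perfect over $\cO_X$, hence its module category is proper, and it is perfect over its enveloping algebra, hence smooth), so $\dAz^{\mathrm{cat}}(X)\subseteq\NcSmPr(X)$ and likewise at each finite level. A fully faithful functor is conservative, so by (1) the functor $\dAz^{\mathrm{cat}}(X)\to\dAz^{\mathrm{cat}}(\fX)$ reflects equivalences: if an invertible $\cL\in\PrLomega_X$ becomes equivalent to the unit in $\lim_i\PrLomega_{X_i}$, then $\cL\simeq\QCoh(X)$. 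Taking $\pi_0$ of Picard groupoids yields the injectivity of $\dBr(X)\to\dBr(\fX)$. To extract the $\rH^2$ statement, restrict to the direct summand $\rH^2\et(-,\bbG_m)$ of $\dBr(-)$ (To\"en: $\dBr\simeq\rH^1\et(-,\Z)\oplus\rH^2\et(-,\bbG_m)$) and to its analogue inside $\dBr(\fX)=\pi_0\lim_i\mathfrak{pic}(\PrLomega_{X_i})$ (the Picard-spectrum functor commuting with limits), which is identified with the continuous Brauer group $\rH^2(\lim_i\rR\Gamma\et(X_i,\bbG_m))$; injectivity of the restricted map is inherited from that of $\dBr(X)\to\dBr(\fX)$.

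\emph{Part (3)} is a diagram chase with the Kummer sequences $0\to\Pic(-)/\ell\to\rH^2\et(-,\mu_\ell)\to\mathrm{Br}(-)[\ell]\to 0$, on $X$ and on each $X_i$. Proper base change over the complete (hence Henselian) local ring $R$ identifies all the groups $\rH^2\et(X_i,\mu_\ell)$ with $\rH^2\et(X_s,\mu_\ell)\simeq\rH^2\et(X,\mu_\ell)$ via restriction, so $\lim_i\rH^2\et(X_i,\mu_\ell)\simeq\rH^2\et(X,\mu_\ell)$ and $\lim_i^1\rH^2\et(X_i,\mu_\ell)=0$; Grothendieck existence gives $\Pic(X)\xrightarrow{\sim}\lim_i\Pic(X_i)$; and the tower $\{\Pic(X_i)[\ell]\}$ is Mittag--Leffler, its transition maps being injective because the kernels of $\Pic(X_{i+1})\to\Pic(X_i)$ are $\kappa$-vector spaces, hence $\ell$-torsion-free, and $\Pic(X_s)[\ell]$ is finite. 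Feeding this into the $\lim/\lim^1$ spectral sequence for the two-term complex of towers $[\Pic(X_\bullet)\xrightarrow{\ell}\Pic(X_\bullet)]$ produces the exact sequence
\[ 0\longrightarrow\Pic(X)/\ell\longrightarrow\lim_i(\Pic(X_i)/\ell)\xrightarrow{\rho}\lim_i^1\Pic(X_i) \]
with $\mathrm{Im}(\rho)=(\lim_i^1\Pic(X_i))[\ell]$, and comparing the Kummer sequence on $X$ with the limit of those on the $X_i$ identifies this last group with $\ker\big(\mathrm{Br}(X)[\ell]\to\lim_i\mathrm{Br}(X_i)\big)$.

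\emph{The Henselian case reduces to (2)}. Let $R$ be local Noetherian Henselian with $R\to\widehat R\coloneqq\lim_n R/\fm^n$ regular (i.e.\ with geometrically regular fibres), and set $\widehat X\coloneqq X\times_S\Spec\widehat R$. Since $R/\fm^{i+1}=\widehat R/(\fm\widehat R)^{i+1}$, the formal completion $\fX$ is the same whether formed from $X$ or from $\widehat X$; as $\widehat R$ is complete, part (2) gives injectivity of $\dBr(\widehat X)\to\dBr(\fX)$, so it suffices to prove injectivity of $\dBr(X)\to\dBr(\widehat X)$. By N\'eron--Popescu desingularization, $\widehat R\simeq\colim_\alpha R_\alpha$ is a filtered colimit of smooth finite-type $R$-algebras, and since $\dBr(-)$ and $\rH^2\et(-,\bbG_m)$ commute with cofiltered limits of qcqs schemes along affine transition maps, $\dBr(\widehat X)\simeq\colim_\alpha\dBr(X\times_S\Spec R_\alpha)$; hence a class $\beta\in\dBr(X)$ dying in $\dBr(\widehat X)$ already dies over some $X_\alpha\coloneqq X\times_S\Spec R_\alpha$. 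The closed point of $\Spec\widehat R$ maps to a $\kappa$-rational point of $\Spec R_\alpha$ lying over the closed point of $S$; because $R$ is Henselian local and $R\to R_\alpha$ is smooth, this point is the image of a section $s\colon\Spec R\to\Spec R_\alpha$, which base-changes along $X\to S$ to a section $\sigma$ of the projection $X_\alpha\to X$. Then $\beta=\sigma^*(\beta|_{X_\alpha})=\sigma^*(0)=0$, so $\dBr(X)\to\dBr(\widehat X)$ is injective; composing with the previous injection completes the proof.
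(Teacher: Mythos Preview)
Your approach to part~(1) overcomplicates the situation and misses the key idea. You correctly reduce to showing that for $\cE$ smooth and proper over $\QCoh(X)$ the map $\cE\to\lim_i\cE\otimes_{\QCoh(X)}\QCoh(X_i)$ is an equivalence --- but note that this is exactly the original claim with $\cC$ replaced by $\cE$, so the detour through internal homs buys nothing. More importantly, the proof of this statement is a two-line argument, not a delicate ``existence theorem with coefficients''. The point is that Grothendieck's existence theorem $\Perf(X)\xrightarrow{\sim}\lim_i\Perf(X_i)$, upon applying $\Ind$, becomes an equivalence
\[
\QCoh(X)\xrightarrow{\ \sim\ }\lim_i\QCoh(X_i)
\]
\emph{where the limit is computed in $\PrLomega$} (not in $\PrL$!), since $\Ind\colon\Cat_\infty^{\mathrm{ex}}\to\PrLomega$ is a right adjoint. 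Now $\cC$ is by hypothesis dualizable in $\PrLomega_X$, so the functor $\cC\otimes_{\QCoh(X)}(-)$ commutes with arbitrary limits in $\PrLomega_X$; tensoring the displayed equivalence with $\cC$ gives
\[
\cC\simeq\cC\otimes_{\QCoh(X)}\QCoh(X)\xrightarrow{\ \sim\ }\lim_i\big(\cC\otimes_{\QCoh(X)}\QCoh(X_i)\big),
\]
which is exactly what you want. Your proposed route via descending the data of $\cE^c$ through $\Coh(X)\simeq\lim_i\Coh(X_i)$ would require making precise what it means to ``assemble a category from mapping complexes and compositions'' in a way compatible with limits, and then checking smoothness survives; this can presumably be done but is far harder than the dualizability argument, and your write-up leaves it as a black box.

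Your arguments for (2), (3), and the Henselian addendum are essentially correct and match the paper. For (3) the paper organizes the diagram slightly differently (Milnor sequence on top, Kummer columns below, then the snake lemma), using nil-invariance of $\rK(\mu_\ell,2)$ rather than proper base change to identify the $\mu_\ell$-cohomologies along the tower, but the content is the same. For the Henselian reduction the paper simply invokes \cite[Lemma~2.1.3]{Bouthier_Cesnavicius_Torsors_Loop}, which is precisely the N\'eron--Popescu section argument you spell out.
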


Note that if $p\colon X\to S$ was assumed to be \emph{smooth}, full faithfulness of the functor \eqref{eq:map_Brauer_intro_categorification} would be a formal consequence of Lurie's \cite[Theorems 11.1.4.1, 11.3.6.1 and 11.4.4.1]{Lurie_SAG}.
In any case, the functor \eqref{eq:map_Brauer_intro_categorification} is not essentially surjective, even if we restrict ourselves to the subcategory of invertible objects (see \cref{rem:failure_surjectivity}).
It would be interesting to determine a specific class of \emph{algebraizable} formal families of smooth and proper (or even invertible) $\infty$-categories, i.e.\ an explicit characterization of the essential image of the functor \eqref{eq:map_Brauer_intro_categorification}.\\

\subsection{Grothendieck'es existence theorem for $\bbG_m$-gerbes}

We now offer another consequence of \cref{thm:GAGA_Introduction}-(1).
The elements of the Brauer-Grothendieck group have another, different, geometric interpretation.
Indeed, the work of Giraud \cite{Giraud_Cohomologie_1971} shows that a class $\alpha \in \rH^2\et(X,\bbG_m)$ can be represented by a $\bbG_m$-gerbe.
Informally, this is the datum of a (derived) stack $\pi \colon \mathfrak A \to X$ which étale locally looks like $\rB\bbG_m \times X$.
One might wonder whether it is possible to prove the formal injectivity statement of \cref{thm:GAGA_Introduction} from this point of view.
It turns out that in order to do so, one would need to know Grothendieck's existence theorem for arbitrary $\bbG_m$-gerbes (see the explanation at the beginning of \cref{sec:G-gerbes}).
However, to the best of our knowledge this is only known for $\bbG_m$-gerbes that have the resolution property \cite[Corollary 1.7]{Alper_Hall_Rydh_Etale}.
On the other hand, Totaro showed in \cite[Theorem 1.1]{Totaro_Resolution_property} that this is equivalent to ask that the $\bbG_m$-gerbe in question is a global quotient, and by \cite[Corollary 3.8 \& Example 3.12]{Vistoli_Brauer_quotient_stack} this is not always the case (since a $\bbG_m$-gerbe is a global quotient if and only if the class in $\rH^2\et(X,\bbG_m)$ is representable by a \emph{classical} Azumaya algebra).
\emph{Reversing the perspective}, we can use \cref{thm:GAGA_Introduction} to prove that Grothendieck's existence theorem holds for \emph{arbitrary} $\bbG_m$-gerbes:

\begin{thm}[{See \cref{cor:GAGA_gerbes}}]
	Let $S = \Spec(R)$ be the spectrum of a Noetherian ring which is complete with respect an ideal $I$.
	Let $X \to S$ be a proper derived $S$-scheme.
	Let $(\mathfrak A, \alpha)$ be a $\bbG_m$-gerbe over $X$.
	Then the canonical map
	\[ \Perf(\mathfrak A) \longrightarrow \lim_n \Perf(\mathfrak A_n) \]
	is a symmetric monoidal equivalence of stable, symmetric monoidal $\infty$-categories.
\end{thm}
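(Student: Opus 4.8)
The strategy is to reduce the statement about the gerbe $\mathfrak A$ to the already-established \cref{thm:GAGA_Introduction}-(1) by relating $\Perf(\mathfrak A)$ to the $\QCoh(X)$-linear category of $\alpha$-twisted sheaves. Recall that a $\bbG_m$-gerbe $(\mathfrak A, \alpha) \to X$ carries a weight decomposition: since $\bbG_m$ acts on every quasi-coherent sheaf on $\mathfrak A$, one has $\QCoh(\mathfrak A) \simeq \bigoplus_{w \in \Z} \QCoh(X; \alpha^w)$, where $\QCoh(X;\alpha^w)$ is the category of weight-$w$ (i.e.\ $\alpha^w$-twisted) sheaves, and likewise on perfect complexes. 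In particular the weight-$1$ summand $\Perf(X;\alpha)$ is a $\Perf(X)$-module which, after passing to $\Ind$-completion, represents the invertible object of $\PrLomega_X$ corresponding to the class $\alpha \in \dBr(X)$; thus $\Ind\Perf(X;\alpha) \in \dAz^{\mathrm{cat}}(X) \subseteq \NcSmPr(X)$, and similarly for $\alpha^w$ for every $w$. Since $\bbG_m$ is smooth and the weight decomposition is compatible with derived base change, the same description holds over each infinitesimal thickening $X_i$: $\Perf(\mathfrak A_i) \simeq \bigoplus_w \Perf(X_i; \alpha^w)$, and the pullback functors along $\mathfrak A_i \to \mathfrak A$ respect the grading.

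The first step is therefore to set up this weight decomposition carefully in the derived setting — referencing the description of $\QCoh$ of a $\bbG_m$-gerbe (e.g.\ via the $\rB\bbG_m$-local structure, or the general theory of quasi-coherent sheaves on quotient stacks) — and to check that it is compatible with the symmetric monoidal structures: the tensor product on $\Perf(\mathfrak A)$ is graded, with $\Perf(X;\alpha^v) \otimes_{\Perf(X)} \Perf(X;\alpha^w) \to \Perf(X;\alpha^{v+w})$ realizing the group law on $\dBr$. The second step is to observe that the canonical map in the statement decomposes as a direct sum over $w \in \Z$ of the maps
\[ \Perf(X;\alpha^w) \longrightarrow \lim_n \Perf(X_n; \alpha^w) , \]
or, after $\Ind$-completion, of the maps $\Ind\Perf(X;\alpha^w) \to \lim_n \Ind\Perf(X_n;\alpha^w)$ inside $\PrLomega$. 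Each of these is, up to the equivalence identifying twisted sheaves with the corresponding invertible linear category, exactly an instance of the fully faithful functor \eqref{eq:map_Brauer_intro_categorification} evaluated on the invertible object $\alpha^w \in \dAz^{\mathrm{cat}}(X)$. Hence each summand is fully faithful. The third step upgrades full faithfulness to essential surjectivity: an object of $\lim_n \Perf(\mathfrak A_n)$ is a compatible system $(\cF_n)$ of perfect complexes, each of which decomposes as $\bigoplus_w \cF_n^{(w)}$ with $\cF_n^{(w)} \in \Perf(X_n;\alpha^w)$; for each fixed $w$ the system $(\cF_n^{(w)})_n$ lies in the essential image of $\Perf(X;\alpha^w) \to \lim_n \Perf(X_n;\alpha^w)$ by the essential surjectivity half of Grothendieck existence for twisted sheaves — but this is precisely where \cref{cor:GAGA_gerbes} needs \cref{thm:GAGA_Introduction}-(1) to supply more than full faithfulness.

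Here lies the main obstacle. \cref{thm:GAGA_Introduction}-(1) only asserts that the functor on $\NcSmPr$ is \emph{fully faithful}, and the remarks after it stress that it is \emph{not} essentially surjective in general. So essential surjectivity of $\Perf(\mathfrak A) \to \lim_n \Perf(\mathfrak A_n)$ cannot come from the categorified statement alone. The resolution is to go back to the \emph{ordinary} (non-categorified) Grothendieck existence theorem for $\Perf$: Lurie's formal GAGA \cite[\S8.5, \S8.4]{Lurie_SAG} or \cite[Theorem 7.4.0.1]{Lurie_SAG} gives that $\Perf(X) \xrightarrow{\ \sim\ } \lim_n \Perf(X_n)$ is an equivalence. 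One then bootstraps: for a \emph{fixed} twist $\alpha^w$, the category $\Perf(X;\alpha^w)$ is a module over $\Perf(X)$ which is invertible, hence dualizable, hence (as a linear category) generated by a single perfect object étale-locally; algebraizing that generator and its module structure using the untwisted statement — carefully tracking the descent datum that encodes the twist, which is itself algebraic because it is a class in $\rH^2\et(X;\bbG_m)$ that lives over $S$ and hence over each $X_n$ compatibly — yields essential surjectivity for each $w$. Assembling over all $w \in \Z$ (a countable direct sum, and the limit $\lim_n$ commutes with finite direct sums; for the full sum one argues componentwise since a perfect complex on $\mathfrak A_n$ has only finitely many nonzero weight components, uniformly in $n$ for a compatible system because the weight components of the transition maps are isomorphisms) gives the equivalence. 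The symmetric monoidality of the resulting equivalence is then automatic: the functor is a symmetric monoidal functor by construction (pullback along $\mathfrak A \to \lim_n \mathfrak A_n$), and an equivalence of underlying $\infty$-categories that is lax symmetric monoidal and on underlying objects an equivalence is a symmetric monoidal equivalence.
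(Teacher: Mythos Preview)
Your overall strategy---decompose by weight, note that each $\QCoh_d(\mathfrak A)$ is invertible and hence smooth and proper in $\PrLomega_X$, apply \cref{thm:GAGA_Introduction}-(1) to each weight, then reassemble---is exactly the paper's. The finiteness argument you sketch at the end (a compatible system $(\cF_n)$ of perfect complexes has only finitely many nonzero weight components, uniformly in $n$, because $\cF_0$ does and the vanishing of a weight component is insensitive to nilpotent thickening) is also what the paper does.

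The gap is your ``main obstacle,'' which is not an obstacle. You read the full faithfulness of $\NcSmPr(X)\to\lim_n\NcSmPr(X_n)$ as yielding only full faithfulness of each $\Perf_d(\mathfrak A)\to\lim_n\Perf_d(\mathfrak A_n)$, and then propose to recover essential surjectivity by an \'etale-local argument built on ordinary Grothendieck existence. But the body of the paper (\cref{thm:GAGA_Morita}) spells out what full faithfulness of that functor means: \emph{for every smooth and proper $\cC$, the canonical map $\cC\to\lim_n\cC_n$ is an equivalence in $\PrLomega_X$} (this is just the statement that the unit of the restriction/limit adjunction is an equivalence). Since the limit in $\PrLomega$ is $\Ind$ of the limit of compact objects, this says precisely that $\cC^\omega\to\lim_n\cC_n^\omega$ is an \emph{equivalence} of small stable $\infty$-categories. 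Applied to $\cC=\QCoh_d(\mathfrak A)$, one gets $\Perf_d(\mathfrak A)\xrightarrow{\ \sim\ }\lim_n\Perf_d(\mathfrak A_n)$ directly, with no further work. Your \'etale-descent detour is therefore unnecessary, and as written it is also incomplete: ``carefully tracking the descent datum that encodes the twist'' and algebraizing it is not visibly easier than the original problem.

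One small correction: the weight decomposition of $\QCoh(\mathfrak A)$ is a \emph{product} $\prod_d\QCoh_d(\mathfrak A)$, not a direct sum; $\Perf(\mathfrak A)$ sits inside the product as those tuples with locally finitely many nonzero components. This is why the final assembly step is genuinely needed, but you handle it correctly.
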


\begin{rem}
	In the main body of the text we prove a finer result: notably, the same result holds true provided that $X$ is a quasi-compact geometric\footnote{See the discussion preceding \cref{thm:qcoh_gerbe} and in particular \cref{rem:geometric_stacks} for the definition of this notion. It includes all algebraic stacks in the sense of \cite[\href{https://stacks.math.columbia.edu/tag/026O}{Tag 026O}]{stacks-project}.} derived stack for which the canonical map
	\[ \Perf(X) \longrightarrow \lim_n \Perf(X_n) \]
	is an equivalence (i.e.\ $X$ is \emph{categorically proper} in the sense of \cite{Halpern-Leistner_Preygel}).
	It is worth observing that, after a first version of this paper was written, a proof of formal injectivity for $\bbG_m$-gerbes over separated algebraic spaces appeared in \cite{kresh-mathur}.
	The proof supplied there is more elementary, and relies on the Tannakian reconstruction theorem of Hall and Rydh \cite{Hall_Rydh_Coherent}, but it also yields less information.
	In particular, it seems to us that the method of \cite{kresh-mathur} does not lead to an alternative proof of the above theorem, especially in the strong formulation just described.
\end{rem}

\medskip
\paragraph{\textbf{Acknowledgments.}}
We would like to thank K\k{e}stutis \v{C}esnavi\v{c}ius, Jean-Louis Colliot-Th\'el\`ene, Andrea Di Lorenzo, Andrea Gagna, Thomas Geisser, Rune Haugseng, Lorenzo Mantovani, Guglielmo Nocera, Michele Pernice, Marco Robalo, Bertrand Toën, Shuji Saito, Gabriele Vezzosi and Angelo Vistoli for very interesting conversations on the subject of this paper.
F. B. is supported by the PRIN “Geometric, Algebraic and Analytic Methods in Arithmetic”.

\section{Review of derived Morita theory}

\subsection{Tensor products of presentable $\infty$-categories}

We start by recalling the basics concerning quasi-coherent sheaves of $\infty$-categories.

\begin{notation}
	\begin{itemize}\itemsep=0.2cm
		\item We let $\PrL$ denote the $\infty$-category of presentable $\infty$-categories and left adjoint functors between them, see \cite[Definition 5.5.3.1]{HTT}.
		
		\item We let $\PrLcg$ denote the \emph{full} subcategory of $\PrL$ spanned by compactly generated presentable $\infty$-categories.
		
		\item We let $\PrLomega$ denote the subcategory of $\PrLcg$ having all the objects and whose morphisms preserve compact objects.
	\end{itemize}
\end{notation}

The results of \cite[\S4.8]{Lurie_Higher_algebra} endow these $\infty$-categories with symmetric monoidal structures.
Given $\cC, \cD \in \PrL$ we denote by $\cC \otimes \cD$ their tensor product, characterized by the property that for every other presentable $\infty$-category $\cE$, $\FunL(\cC \otimes \cD, \cE)$ consists exactly of the \emph{bicontinuous} functors $\cC\times \cD \to \cE$.

\medskip

\begin{thm}[To\"en-Lurie] \label{thm:tensor_product_presentable}
	\hfill
	\begin{enumerate}\itemsep=0.2cm
		\item Let $\cC$ and $\cD$ be two presentable $\infty$-categories.
		If $\cC$ and $\cD$ are compactly generated, the same goes for $\cC \otimes \cD$.
		
		\item \label{thm:tensor_product_presentable:symmetric_monoidal} The tensor product $\otimes$ can be promoted to a symmetric monoidal structure on both $\PrL$ and $\PrLomega$.
		The natural inclusion functor $j \colon \PrLomega \to \PrL$ can be upgraded to a symmetric monoidal $\infty$-functor.
		
		\item \label{thm:tensor_product_presentable:closed} Let $\cC$ and $\cD$ be presentable $\infty$-categories.
		The evaluation map
		\[ \cC \times \FunL( \cC, \cD ) \longrightarrow \cD \]
		is bicontinuous and the induced map
		\[ \cC \otimes \FunL( \cC, \cD ) \longrightarrow \cD \]
		is a counit for the adjunction $- \otimes \cC \dashv \FunL( \cC, - )$.
		In particular, the symmetric monoidal structure on $\PrL$ is closed and $\otimes$ commutes with colimits in both variables.
	\end{enumerate}	
\end{thm}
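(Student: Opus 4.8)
\emph{Proof proposal.} Both statements appear in To\"en's work \cite{Toen_Azumaya} and, in the $\infty$-categorical form used here, in \cite[\S4.8]{Lurie_Higher_algebra}; I sketch the shape of the argument. The plan is to (i) construct the symmetric monoidal structure on $\PrL$ through its universal property, (ii) deduce closedness, the identification of the internal hom with $\FunL$, and all of part (3) by soft manipulations, and (iii) descend the structure along $\Ind$ to $\PrLcg$ and $\PrLomega$, which also yields part (1).

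For (i), I would produce $\cC \otimes \cD \in \PrL$ corepresenting $\cE \mapsto \BiFun(\cC \times \cD, \cE)$, the full subcategory of $\Fun(\cC \times \cD, \cE)$ on the functors preserving colimits in each variable separately, together with a coherent symmetric monoidal refinement. The cleanest route: the presheaf functor $\mathcal P(-) = \Fun((-)^{\mathrm{op}}, \mathcal S) \colon \Catinf \to \PrL$, left adjoint to the forgetful functor, is symmetric monoidal from the Cartesian structure on $\Catinf$ to the sought-for $\otimes$ — since $\FunL(\mathcal P(\cC_0) \otimes \mathcal P(\cD_0), \cE) \simeq \Fun(\cC_0 \times \cD_0, \cE) \simeq \FunL(\mathcal P(\cC_0 \times \cD_0), \cE)$ — every presentable $\infty$-category is an accessible localisation of some $\mathcal P(\cC_0)$, and this localisation is compatible with $\otimes$; transporting the coherence data through the localisation gives the monoidal structure on $\PrL$. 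The unit is $\mathcal S \simeq \mathcal P(\ast)$, since $\mathcal S \otimes \cC$ corepresents $\cE \mapsto \FunL(\cC, \cE)$ and hence $\mathcal S \otimes - \simeq \mathrm{id}$.

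For (ii), currying identifies a bifunctor $\cC \times \cD \to \cE$ that is colimit-preserving in each variable with a colimit-preserving functor $\cC \to \FunL(\cD, \cE)$; since $\FunL(\cD, \cE)$ is again presentable (see \cite[\S5.5]{HTT}), the universal property from (i) gives natural equivalences
\[ \FunL(\cC \otimes \cD, \cE) \simeq \FunL\big( \cC, \FunL(\cD, \cE) \big) . \]
Thus $\FunL(\cD, -)$ is right adjoint to $- \otimes \cD$: the monoidal structure is closed with internal hom $\FunL$, and $- \otimes \cD$, being a left adjoint, preserves colimits, so by symmetry $\otimes$ is bicocontinuous. The evaluation $\cC \times \FunL(\cC, \cD) \to \cD$ is bicontinuous — colimit-preserving in the first variable because every $F \in \FunL(\cC, \cD)$ preserves colimits, in the second because colimits in $\FunL(\cC, \cD)$ are pointwise — and the induced map $\cC \otimes \FunL(\cC, \cD) \to \cD$ corresponds under $\FunL(\cC \otimes \FunL(\cC, \cD), \cD) \simeq \FunL(\FunL(\cC, \cD), \FunL(\cC, \cD))$ to the identity, hence is the counit of $- \otimes \cC \dashv \FunL(\cC, -)$. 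This is part (3).

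For (iii), given compactly generated $\cC \simeq \Ind(\cC^\omega)$ and $\cD \simeq \Ind(\cD^\omega)$, I would check, from the universal properties of $\otimes$ and $\Ind$, that $\cC \otimes \cD$ is generated under colimits by the objects $c \otimes d$ ($c \in \cC^\omega$, $d \in \cD^\omega$) and that these are compact, so $\cC \otimes \cD \in \PrLcg$ — this is part (1), for which I would cite \cite[\S4.8.1, \S5.3.2]{Lurie_Higher_algebra}. Then the monoidal structure of (i) restricts to the full subcategory $\PrLcg \subseteq \PrL$, which contains the unit; and the above description of the compact objects of $\cC \otimes \cD$ shows that a tensor product of compact-object-preserving functors again preserves compact objects, so the structure restricts further to $\PrLomega$ and the inclusion $j \colon \PrLomega \to \PrL$ acquires a symmetric monoidal refinement — part (2). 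The main obstacle is the \emph{coherent} construction in step (i): producing not merely the bifunctor $\otimes$ with its universal property, but the full associativity and symmetry data, which is what forces one through the machinery of symmetric monoidal localisations of Cartesian $\infty$-categories; everything in steps (ii) and (iii) — closedness, the internal hom, cocontinuity, compact generation, and the passage to $\PrLomega$ — is then formal.
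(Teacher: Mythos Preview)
Your proposal is correct and follows the approach of \cite[\S4.8.1]{Lurie_Higher_algebra}, which is exactly what the paper invokes: the paper's own proof consists solely of citations to \cite[Lemma 5.3.2.11-(2), Proposition 4.8.1.15, Remark 4.8.1.18]{Lurie_Higher_algebra} for parts (1), (2), (3) respectively, with no argument sketched. Your outline unpacks the content of those references faithfully --- in particular your steps (ii) and (iii) match the cited Remark and Lemma almost verbatim --- and your honest flagging of the coherence issue in step (i) is appropriate, since that is precisely the technical content absorbed by citing Proposition 4.8.1.15.
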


\begin{proof}
	The first statement follows from \cite[Lemma 5.3.2.11-(2)]{Lurie_Higher_algebra}.
	Point (\ref{thm:tensor_product_presentable:symmetric_monoidal}) is exactly the content of \cite[Proposition 4.8.1.15]{Lurie_Higher_algebra}.
	Statement (\ref{thm:tensor_product_presentable:closed}) is discussed in \cite[Remark 4.8.1.18]{Lurie_Higher_algebra}.
\end{proof}

\begin{defin} \label{def:rigid_symmetric_monoidal}
	Let $R$ be a connective $\mathbb E_\infty$-ring and let $\cA$ be a presentably symmetric monoidal $R$-linear $\infty$-category.
	We say that $\cA$ is \emph{rigid} if it satisfies the following conditions:
	\begin{enumerate}\itemsep=0.2cm
		\item the right adjoint to the multiplication map $\cA \otimes \cA \to \cA$ is a morphism in $\PrL_R$;
		\item the right adjoint to the unit $\Mod_R \to \cA$ is a morphism in $\PrL_R$;
		\item every compact object of $\cA$ admits both a left and a right dual.
	\end{enumerate}
\end{defin}

The following technical result will be needed later on:

\begin{prop} \label{prop:compactly_generated_algebras}
	Let $R$ be a connective $\mathbb E_\infty$-ring and let $\cA$ be a presentably symmetric monoidal $R$-linear $\infty$-category.
	Assume that $\cA$ is compactly generated and rigid.
	Then:
	\begin{enumerate}\itemsep=0.2cm
		\item $\cA$ is a commutative algebra in $\PrLomega_R$;
		
		\item The canonical functor
		\[ \Mod_\cA( \PrLomega_R ) \longrightarrow \Mod_\cA( \PrL_R ) \]
		can be upgraded to a symmetric monoidal functor.
	\end{enumerate}
\end{prop}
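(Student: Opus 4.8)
The plan is to deduce both statements from the closed symmetric monoidal structure on $\PrL_R$ and $\PrLomega_R$ established in \cref{thm:tensor_product_presentable}, together with the rigidity hypotheses. For statement (1), the key point is that a commutative algebra object in $\PrL_R$ lives in $\PrLomega_R$ precisely when it is compactly generated and its multiplication and unit maps have right adjoints preserving compact objects; the first two rigidity conditions say these right adjoints are morphisms in $\PrL_R$, so what remains is to check they preserve compact objects. First I would recall that a colimit-preserving functor between compactly generated $\infty$-categories admits a right adjoint preserving filtered colimits (equivalently, preserving compact objects) if and only if it sends compact objects to compact objects — this is the standard characterization underlying the definition of $\PrLomega$. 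Thus it suffices to show that the multiplication $\cA \otimes_R \cA \to \cA$ and the unit $\Mod_R \to \cA$ preserve compact objects. For the unit, the compact generator of $\Mod_R$ is $R$ itself, which maps to the unit object $\mathbf 1_\cA$; since $\cA$ is rigid, $\mathbf 1_\cA$ is compact (a unit that is dualizable in a compactly generated setting is compact — here one uses that every compact object admits a dual, and conversely the internal-hom against a compact dualizable object preserves colimits). For the multiplication, compact objects of $\cA \otimes_R \cA$ are generated under finite colimits and retracts by pure tensors $c \otimes d$ with $c, d \in \cA$ compact, and these map to $c \otimes_\cA d$; using that compact objects in a rigid category are stable under the monoidal product (since each admits a dual, and the dual of a compact object is compact), we conclude $c \otimes_\cA d$ is compact. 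Hence $\cA$ together with its algebra structure lifts to a commutative algebra object of $\PrLomega_R$.

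**The functor on module categories.**

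For statement (2), I would argue as follows. By statement (1), $\cA$ is a commutative algebra in $\PrLomega_R$, so the $\infty$-category $\Mod_\cA(\PrLomega_R)$ of $\cA$-modules in $\PrLomega_R$ inherits a symmetric monoidal structure via relative tensor product over $\cA$ — this is the general machinery of \cite[\S4.5]{Lurie_Higher_algebra} applied to a commutative algebra in a symmetric monoidal $\infty$-category, valid because $\PrLomega_R$ admits the relevant geometric realizations (it has all colimits, being a subcategory of $\PrL$ closed under the relevant bar constructions — one needs here that the relative tensor product of two modules in $\PrLomega_R$, computed as a bar construction, stays in $\PrLomega_R$, which follows because the multiplication and action functors preserve compact objects by the rigidity of $\cA$ and the definition of the morphisms in $\PrLomega_R$). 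The same construction over the symmetric monoidal $\infty$-category $\PrL_R$ gives the symmetric monoidal $\infty$-category $\Mod_\cA(\PrL_R)$. The symmetric monoidal inclusion $j_R \colon \PrLomega_R \hookrightarrow \PrL_R$ of \cref{thm:tensor_product_presentable}\eqref{thm:tensor_product_presentable:symmetric_monoidal} carries the algebra $\cA$ to $\cA$ and thus induces, by functoriality of the $\Mod_{(-)}(-)$ construction in both the algebra and the ambient symmetric monoidal $\infty$-category, a symmetric monoidal functor
\[ \Mod_\cA(\PrLomega_R) \longrightarrow \Mod_\cA(\PrL_R) , \]
which is the canonical functor in the statement. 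That this is the underlying functor is a matter of unwinding the definitions: an $\cA$-module in $\PrLomega_R$ is an $\cA$-module $\cM$ in $\PrL_R$ together with the condition that $\cM$ is compactly generated and the action functor preserves compact objects, and $j_R$ forgets this condition.

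**The main obstacle.**

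The step I expect to require the most care is verifying that $\Mod_\cA(\PrLomega_R)$ is genuinely symmetric monoidal — i.e. that the relative tensor product $\cM \otimes_\cA \cN$ of two objects of $\Mod_\cA(\PrLomega_R)$ again lies in $\PrLomega_R$ (equivalently, is compactly generated with the canonical functors preserving compact objects), rather than merely being computed in $\PrL_R$. The subtlety is that $\PrLomega_R$ is a non-full subcategory of $\PrL_R$, so one must check both that the underlying $\infty$-category of the bar construction is compactly generated and that all the structure maps of the simplicial diagram, and hence the colimit maps, preserve compact objects; this is exactly where the rigidity of $\cA$ — specifically that the multiplication and unit have right adjoints in $\PrL_R$, i.e. preserve compact objects — is used, in the style of \cite[\S4.8.5]{Lurie_Higher_algebra} on the Morita/rigid setting. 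Once this closure property is established, the symmetric monoidal enhancement and the functoriality giving statement (2) are formal consequences of the $\infty$-categorical machinery, and no further geometric input is needed.
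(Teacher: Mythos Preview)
Your overall strategy matches the paper's, but there are two places where the argument for statement~(1) is underdeveloped or confused.

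First, you assert that a commutative algebra in $\PrL_R$ lifts to $\PrLomega_R$ as soon as the underlying object is compactly generated and the binary multiplication and unit preserve compact objects. This reduction is exactly what the bulk of the paper's proof is devoted to establishing: one has to argue at the level of the $\infty$-operad $(\PrL_R)^\otimes$, showing that the section $s \colon \mathsf{Fin}_* \to (\PrL_R)^\otimes$ defining $\cA$ factors through the non-full sub-operad $(\PrLomega_R)^\otimes$. The paper does this by decomposing every morphism of $\mathsf{Fin}_*$ into an inert part (automatic, since coCartesian lifts over inert morphisms stay in the sub-operad once the source does) and an active part, and then further factoring active morphisms into surjective steps (reducing to the multiplication $\cA \otimes \cA \to \cA$) and injective steps (reducing to the unit $\Mod_R \to \cA$). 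Without some argument of this kind, you have only checked that the underlying \emph{magma with unit} lifts, not the full $\mathbb E_\infty$-structure.

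Second, your use of the rigidity hypotheses is tangled. You correctly recall that a left adjoint between compactly generated categories preserves compact objects if and only if its right adjoint preserves filtered colimits (equivalently, lies in $\PrL_R$). Rigidity conditions~(1) and~(2) say \emph{precisely} that the right adjoints to multiplication and unit lie in $\PrL_R$, so the conclusion is immediate and no further work is needed. Your subsequent detour through condition~(3) is unnecessary, and the specific argument you give for compactness of $\mathbf 1_\cA$ from condition~(3) alone does not work: ``compact implies dualizable'' does not by itself force the unit to be compact. (Also, your phrase ``right adjoints preserving compact objects'' is the wrong condition --- you want the right adjoints to preserve filtered colimits.)

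For statement~(2), your approach and the paper's agree. The obstacle you identify --- that the bar construction computing $\cM \otimes_\cA \cN$ must remain in $\PrLomega_R$ --- is resolved in the paper by the single observation that the inclusion $\PrLomega_R \hookrightarrow \PrL_R$ is strong monoidal and \emph{commutes with small colimits}; once that is granted, \cite[Theorem~4.2.2.8]{Lurie_Higher_algebra} gives the symmetric monoidal upgrade formally.
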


\begin{proof}
	Consider the natural inclusion $j \colon \PrLomega_R \to \PrL_R$ equipped with the (natural) symmetric monoidal structure provided by \cref{thm:tensor_product_presentable}-(2).
	We can review it as a morphism of coCartesian fibrations
	\[ \begin{tikzcd}[column sep = small]
		(\PrLomega_R)^\otimes \arrow{rr}{j} \arrow{dr} & & (\PrL_R)^\otimes \arrow{dl} \\
		{} & \mathsf{Fin}_* & \phantom{(\PrL_R)^\otimes}.
	\end{tikzcd} \]
	Since $(\PrL_R)^\otimes$ and $(\PrLomega_R)^\otimes$ are $\infty$-operads, for every $\langle n \rangle \in \mathsf{Fin}_*$, we have canonical identifications
	\[ (\PrLomega_R)^\otimes_{\langle n \rangle} \simeq (\PrLomega_R)^{\times n}, \qquad (\PrL_R)^\otimes_{\langle n \rangle} \simeq (\PrL_R)^{\times n} . \]
	For every $1 \le i \le n$, let $\rho^n_i \colon \langle n \rangle \to \langle 1 \rangle$ be the inert morphism sending everything except $i \in \langle n \rangle$ to $* \in \langle 1 \rangle$.
	If $\cB \in (\PrL_R)^\otimes_{\langle n \rangle}$, we let
	\[ \cB \longrightarrow \cB_i \]
	be a coCartesian lift of $\rho^n_i$.
	The objects $\cB_i$ are well defined up to a contractible space of choices, and we therefore allow ourselves to write
	\[ \cB \simeq (\cB_i)_{i \in \langle n \rangle^\circ} . \]
	Since $j$ is a symmetric monoidal functor (i.e.\ it preserves coCartesian edges), we see that a coCartesian morphism $\cB \to \cC$ in $(\PrL_R)^\otimes$ belongs to $(\PrLomega_R)^\otimes$ if and only if $\cB$ does.\\
	
	We can identify $\CAlg(\PrL_R)$ with the category of maps of $\infty$-operads $\mathsf{Fin}_* \to (\PrL_R)^\otimes$, and similarly for $\CAlg(\PrLomega_R)$.
	Since the functor $j$ is faithful, we see that a section of $s \colon \mathsf{Fin}_* \to (\PrL_R)^\otimes$ factors through $(\PrLomega_R)^\otimes$ if and only if it satisfies the following two conditions:
	\begin{enumerate}\itemsep=0.2cm
		\item for every object $\langle n \rangle \in \mathsf{Fin}_*$, $s(\langle n \rangle ) \in (\PrLomega_R)^\otimes$;
		\item for every morphism $f \colon \langle n \rangle \to \langle m \rangle$, $s(f)$ is a morphism in $(\PrLomega_R)^\otimes$.
	\end{enumerate} 
	Let $s \colon \mathsf{Fin}_* \to (\PrL_R)^\otimes$ be the section corresponding to the presentably symmetric monoidal $\infty$-category $\cA$.
	We have to prove that rigidity of $\cA$ implies that $s$ satisfies the above two conditions.
	Since $s$ is a map of $\infty$-operads, we have canonical equivalences
	\[ s(\langle n \rangle)_i \simeq \cA . \]
	Since $\cA$ is compactly generated by assumption, we conclude that condition (1) is automatically satisfied.
	We now verify condition (2).
	Since every morphism in $\mathsf{Fin}_*$ can be written as the composition of an inert and an active morphism (see \cite[Remark 2.1.2.2]{Lurie_Higher_algebra}), we see that it is enough to prove that condition (2) is satisfied separately by these two classes of morphisms.
	To begin with, assume that $f$ is inert.
	Since $s$ is a map of $\infty$-operads, $s(f)$ is coCartesian.
	Since $s(\langle n \rangle)$ belongs to $(\PrLomega_R)^\otimes_{\langle n \rangle}$, we deduce that $s(f)$ belongs to $(\PrLomega_R)^\otimes$ as well.\\
	
	Assume now that $f$ is active.
	In this case, we can factor $f$ as a surjective morphism followed by an injective one, and it is therefore enough to prove that condition (2) holds separately in these two cases.
	Assume first that $f$ is active and surjective.
	Then $f$ can be (non-uniquely) factored as a composition
	\[ \langle n \rangle \xrightarrow{f_1} \langle n-1\rangle \xrightarrow{f_2} \cdots \xrightarrow{f_m} \langle m \rangle , \]
	where each $f_i$ is active and surjective.
	It therefore enough to treat the case of a morphism $g \colon \langle n' \rangle \to \langle n' - 1 \rangle$ which is active and surjective.
	In this case, there exists an element $k \in \langle n'-1 \rangle^\circ$ such that:
	\begin{enumerate}\itemsep=0.2cm
		\item $g\inv(k) = \{i_1, i_2\}$ consists of exactly two elements;
		\item every other element of $\langle n' -1 \rangle$ has exactly one preimage via $g$.
	\end{enumerate}
	Let
	\[ s(\langle n' \rangle) \longrightarrow \cB \]
	be a coCartesian lift for $g$ starting at $s(\langle n' \rangle)$.
	Then $s(f)$ belongs to $(\PrLomega_R)^\otimes$ if and only if the uniquely determined morphism $\gamma \colon \cB \to s(\langle n'-1 \rangle)$ does.
	Unraveling the definitions, we have:
	\[ \cB_h \simeq \begin{cases}
		s(\langle n' \rangle)_{g\inv(h)} & \text{if } h \ne k \\
		s(\langle n' \rangle)_{i_1} \otimes s(\langle n' \rangle)_{i_2} & \text{if } h = k.
	\end{cases} \]
	In other words, $\cB_h \simeq \cA$ if $h \ne k$ and $\cB_k \simeq \cA \otimes \cA$.
	With these conventions, we see that $\gamma_h$ is (equivalent to) the identity of $\cA$ if $h \ne k$ and it is the multiplication
	\[ \cA \otimes \cA \longrightarrow \cA \]
	when $h = k$.
	Since this map belongs to $\PrLomega_R$ by assumption, the conclusion follows in the active and surjective case.\\
	
	We are left to deal with the case where $f \colon \langle n \rangle \to \langle m \rangle$ is active and injective.
	Let once again
	\[ s(\langle n \rangle) \longrightarrow \cB \]
	be the coCartesian lift of $f$ starting at $s(\langle n \rangle)$.
	Then $s(f)$ belongs to $(\PrLomega_R)^\otimes$ if and only if the uniquely determined morphism $\delta \colon \cB \to s(\langle m \rangle)$ does.
	Unraveling the definitions, we see that
	\[ \cB_k = \begin{cases}
		\cA & \text{if } k \in \mathrm{Im}(f) \\
		\Mod_R & \text{otherwise}.
	\end{cases} \]
	In these terms, the map $\delta_k$ is the identity of $\cA$ if $k \in \mathrm{Im}(f)$, and it corresponds to the the map $\Mod_R \to \cA$ selecting the tensor unit of $\cA$ otherwise.
	As the latter is in $\PrLomega_R$ by assumption, the proof of point (1) follows.\\
	
	As for statement (2), it follows from the fact that $\PrLomega_R \to \PrL_R$ is strong monoidal and commutes with small colimits together with \cite[Theorem 4.2.2.8]{Lurie_Higher_algebra}.
\end{proof}

\subsection{Smooth and proper categories}

Let $X$ be a quasi-compact and quasi-separated derived scheme.
Bondal-Van den Bergh's theorem \cite{Bondal_VdB} implies that $\QCoh(X)$ is compactly generated, and therefore defines an object in $\PrLomega$.
More it is true: since $\QCoh(X) \simeq \Ind(\Perf(X))$, and perfect complexes are exactly dualizable objects in $\QCoh(X)$, we see that $\QCoh(X)$ is a \emph{rigid} symmetric monoidal $\infty$-category, in the sense of Definition~\ref{def:rigid_symmetric_monoidal}.
Thus, Proposition~\ref{prop:compactly_generated_algebras} implies that $\QCoh(X)$ defines an algebra in $\PrLomega$.
We can therefore give the following definition:

\begin{defin}
	Let $X$ be a qcqs derived scheme.
	We define:
	\begin{enumerate} \itemsep=0.2cm
		\item the \emph{$\infty$-category of $\QCoh(X)$-linear presentable $\infty$-categories} $\PrL_X$ as
		\[ \PrL_X\coloneqq \Mod_{\QCoh(X)}(\PrL) . \]
		
		\item The \emph{$\infty$-category of compactly generated $\QCoh(X)$-linear $\infty$-categories} $\PrLcg_X$ as
		\[ \PrLcg_X \coloneqq \PrL_X \times_{\PrL} \PrLcg . \]
		
		\item The \emph{$\infty$-category of compactly generated $\QCoh(X)$-linear $\infty$-categories up to Morita equivalence} $\PrLomega_X$ (or simply the \emph{Morita theory of $X$}) as
		\[ \PrLomega_X \coloneqq \Mod_{\QCoh(X)}(\PrLomega) . \]
	\end{enumerate}
\end{defin}

Let us briefly review the descent properties of these categories.
Let us observe the following mild generalization of \cite[5.4.5.7]{HTT}, whose proof is standard:

\begin{lem} \label{lem:compact_objects_finite_limit}
	Let $F \colon I \to \PrLomega$ be a finite diagram.
	For every $i \in I$, let $\cC_i \coloneqq F(i)$ and let
	\[ \cC \coloneqq \lim_{i \in I} \cC_i \]
	the limit being computed in $\PrL$.
	Let $f_i \colon \cC \to \cC_i$ be the natural projections.
	If $c \in \cC$ is an object and its components $f_i(c)$ are compact, the same goes for $c$.
\end{lem}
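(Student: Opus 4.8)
The plan is to test compactness of $c$ directly against filtered colimits, reducing the only nontrivial ingredient to the elementary fact that filtered colimits commute with finite limits in the $\infty$-category $\cS$ of spaces; the finiteness of $I$ will enter exactly at that point.

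First I would record two preliminaries. Since the forgetful functor $\PrL \to \widehat{\Catinf}$ preserves small limits \cite[Proposition~5.5.3.13]{HTT}, the underlying $\infty$-category of $\cC$ is the limit $\lim_{i \in I} \cC_i$ computed in $\widehat{\Catinf}$, and each projection $f_i \colon \cC \to \cC_i$, being a morphism in $\PrL$, is a left adjoint and hence preserves all small colimits. Second, mapping spaces in a limit of $\infty$-categories are the corresponding limit of mapping spaces: writing $\Map_\cC(x,y) \simeq \{x\} \times_\cC \Fun(\Delta^1, \cC) \times_\cC \{y\}$ and using that $\Fun(\Delta^1, -)$ and fibre products commute with limits of $\infty$-categories, one obtains a natural equivalence
\[ \Map_\cC(x,y) \;\simeq\; \lim_{i \in I} \Map_{\cC_i}\big(f_i(x), f_i(y)\big) \qquad \text{for all } x, y \in \cC . \]

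I would then let $d \colon J \to \cC$ be a filtered diagram and set $d_\infty \coloneqq \colim_{j \in J} d_j$. Since each $f_i$ preserves colimits, $f_i(d_\infty) \simeq \colim_{j \in J} f_i(d_j)$, and since $f_i(c)$ is compact in $\cC_i$ by hypothesis,
\[ \Map_{\cC_i}\big(f_i(c), f_i(d_\infty)\big) \;\simeq\; \colim_{j \in J} \Map_{\cC_i}\big(f_i(c), f_i(d_j)\big) . \]
Feeding this into the mapping-space formula yields
\[ \Map_\cC(c, d_\infty) \;\simeq\; \lim_{i \in I} \colim_{j \in J} \Map_{\cC_i}\big(f_i(c), f_i(d_j)\big) . \]
Because $I$ is finite, $\lim_{i \in I}$ commutes with the filtered colimit $\colim_{j \in J}$ in $\cS$ (see \cite[\S 5.3.3]{HTT}); exchanging the two and invoking the mapping-space formula once more identifies the right-hand side with $\colim_{j \in J} \Map_\cC(c, d_j)$. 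Hence $\Map_\cC(c,-)$ commutes with filtered colimits, i.e.\ $c$ is a compact object of $\cC$.

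The one point that needs a little care is keeping the interchange of $\lim_{i \in I}$ and $\colim_{j \in J}$ honest at the level of $\infty$-categories rather than homotopy categories; this is the only place where finiteness of $I$ is used, and the statement is genuinely false without it (already for an infinite product of copies of $\cS$). I would also note that the argument uses nothing about $F$ beyond the fact that its transition functors — and hence the projections $f_i$ — preserve colimits, so the same proof gives the analogous statement with $\PrLomega$ replaced by $\PrL$; alternatively one could first check that filtered colimits in $\cC$ are computed componentwise and then test compactness against such colimits, but the computation above seems the most transparent.
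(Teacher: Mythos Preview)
Your proof is correct and follows essentially the same approach as the paper's own argument: compute $\Map_\cC(c,\colim_j d_j)$ as a limit over $I$ of mapping spaces in the $\cC_i$, use compactness of $f_i(c)$ to pull the filtered colimit out, and then exchange the finite limit with the filtered colimit in $\cS$. Your write-up is in fact more careful than the paper's, spelling out why mapping spaces in a limit of $\infty$-categories decompose as a limit and why the projections $f_i$ preserve colimits.
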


\ifpersonal
\begin{proof}
	Let $G \colon J \to \cC$ be a filtered diagram.
	For every $\alpha \in J$, let $d_\alpha \coloneqq G$ and set
	\[ d \coloneqq \colim_{\alpha \in J} d_\alpha . \]
	Since $I$ is finite and $J$ is filtered, we have
	\begin{align*}
		\Map_{\cC}( c, \colim_{\alpha \in J} d_\alpha ) & \simeq \lim_{i \in I} \Map_{\cC_i}( f_i(c), \colim_{\alpha \in J} f_i(d_\alpha) ) \\
		& \simeq \lim_{i \in I} \colim_{\alpha \in J} \Map_{\cC_i}( f_i(c), f_i(d_\alpha) ) \\
		& \simeq \colim_{\alpha \in J} \lim_{i \in I} \Map_{\cC_i}( f_i(c), f_i(d_\alpha) ) \\
		& \simeq \colim_{\alpha \in J} \Map_\cC( c, d_\alpha ) .
	\end{align*}
	This shows that $c$ is compact in $\cC$.
\end{proof}
\fi

Work of Toën, Lurie and Gaitsgory implies the following:

\begin{thm} \label{thm:catQCoh_descent}
	\hfill
	\begin{enumerate}\itemsep=0.2cm
		\item The assignments $X \mapsto \PrL_X$ and $X \mapsto \PrLcg_X$ satisfy \'etale descent on qcqs derived schemes.
		
		\item The assignment $X \mapsto \PrLomega_X$ satisfies Zariski descent on qcqs derived schemes.
	\end{enumerate}
\end{thm}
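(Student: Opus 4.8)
The plan is to obtain both assertions by combining three external inputs with the elementary observation recorded in \cref{lem:compact_objects_finite_limit}. The inputs are: (a) the functor $\QCoh$ on qcqs derived schemes, valued in $\CAlg(\PrL)$, sends \v{C}ech nerves of étale coverings to limit diagrams (Lurie, \cite{Lurie_SAG}); (b) forming module $\infty$-categories over $\QCoh$ is itself compatible with étale descent --- equivalently, the localization functor $\Mod_{\QCoh(Y)}(\PrL)\to\Shv\Cat(Y)$ onto the $\infty$-category of sheaves of $\infty$-categories on $Y$ is an equivalence for every qcqs derived scheme $Y$, i.e.\ $Y$ is $1$-affine (Gaitsgory; see also \cite{Lurie_SAG} for the spectral formulation via quasi-coherent stacks); and (c) compact generation of $\QCoh(X)$-linear presentable $\infty$-categories is fpqc-local, in the precise sense of \cite[Theorem 0.2]{Toen_Azumaya}.

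First I would treat $X\mapsto\PrL_X$. Let $U\to X$ be an étale cover with $U$ affine and let $U_\bullet$ be its \v{C}ech nerve; quasi-separatedness of $X$ forces each $U_n$ to be qcqs, hence $1$-affine. Using that $X$ is $1$-affine, that $\Shv\Cat(-)$ is an étale sheaf by construction, and that each $U_n$ is $1$-affine, one assembles a chain of equivalences
\[ \PrL_X \;=\; \Mod_{\QCoh(X)}(\PrL)\;\simeq\;\Shv\Cat(X)\;\simeq\;\lim_{[n]\in\Delta}\Shv\Cat(U_n)\;\simeq\;\lim_{[n]\in\Delta}\PrL_{U_n}, \]
which is étale descent for $X\mapsto\PrL_X$. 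For $X\mapsto\PrLcg_X$, note that the coface maps of $U_\bullet$ are pullbacks along étale morphisms, so the functors $\QCoh(U_n)\to\QCoh(U_{n+1})$ preserve perfect complexes and hence compact generation of module categories; consequently $\lim_{[n]\in\Delta}\PrLcg_{U_n}$ is identified, inside $\lim_{[n]\in\Delta}\PrL_{U_n}\simeq\PrL_X$, with the full subcategory of those $\QCoh(X)$-modules whose restriction to $U$ is compactly generated. By input (c) this full subcategory is exactly $\PrLcg_X$, and therefore $\PrLcg_X\simeq\lim_{[n]\in\Delta}\PrLcg_{U_n}$.

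For the Zariski statement the point is finiteness. A qcqs derived scheme admits a \emph{finite} cover by affine opens $\{U_i\}_{i=1}^m$, and quasi-separatedness makes each intersection $U_S:=\bigcap_{i\in S}U_i$ ($\emptyset\neq S\subseteq\{1,\dots,m\}$) qcqs; for a finite cover, \v{C}ech descent collapses to a finite limit over the poset $P$ of nonempty subsets ordered by inclusion. The previous paragraph, applied over $P$, gives $\PrLcg_X\simeq\lim_{S\in P}\PrLcg_{U_S}$, with transition functors given by restriction along open immersions. Each such restriction $\QCoh(U_S)\to\QCoh(U_{S'})$ is symmetric monoidal, colimit-preserving and preserves perfect complexes, hence --- the compact objects of $\QCoh$ on a qcqs derived scheme being exactly the perfect ones --- is a morphism of $\CAlg(\PrLomega)$; thus $S\mapsto\PrLomega_{U_S}$ is a well-defined finite diagram sitting inside $S\mapsto\PrLcg_{U_S}$, and it remains to match $\lim_{S\in P}\PrLomega_{U_S}$ with $\PrLomega_X=\Mod_{\QCoh(X)}(\PrLomega)$ inside $\PrLcg_X$. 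Here \cref{lem:compact_objects_finite_limit} enters: given $\cC\simeq\lim_{S\in P}\cC_S$ in $\PrLcg_X$, an object $c\in\cC$ is compact if and only if each image $c_S\in\cC_S$ is; applying this to $F\otimes c$ with $F\in\Perf(X)$ and $c$ compact --- and using $(F\otimes c)_S\simeq F|_{U_S}\otimes c_S$ --- shows that the $\QCoh(X)$-action on $\cC$ preserves compact objects precisely when each $\QCoh(U_S)$-action on $\cC_S$ does, i.e.\ when each $\cC_S\in\PrLomega_{U_S}$; the same argument with a morphism $\phi=(\phi_S)_{S\in P}$ in place of the action shows that $\phi$ preserves compact objects if and only if each $\phi_S$ does. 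Hence the two full subcategories agree and $\PrLomega_X\simeq\lim_{S\in P}\PrLomega_{U_S}$, which is Zariski descent.

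The delicate point --- and the reason (2) is restricted to the Zariski topology --- is exactly the use of \cref{lem:compact_objects_finite_limit}, which detects compactness componentwise only for \emph{finite} diagrams: the last argument cannot be run over the infinite cosimplicial \v{C}ech nerve that a genuine étale cover imposes, and there is indeed no reason for $\lim_{[n]\in\Delta}\PrLomega_{U_n}$ to recover $\PrLomega_X$. More broadly, the crux of both parts is the same: one must know that a limit computed in $\PrL$ of a diagram valued in $\PrLcg$ is again compactly generated, which is precisely what inputs (b) and (c) supply --- and this is why the statement rests on the combined work of Toën, Lurie and Gaitsgory rather than on formal manipulations alone.
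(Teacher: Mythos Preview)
Your argument is correct and follows essentially the same route as the paper: for part (1) you invoke Gaitsgory's $1$-affineness together with Lurie's descent results and then use To\"en's local criterion for compact generation, exactly as the paper does; for part (2) you reduce to a finite diagram and use \cref{lem:compact_objects_finite_limit} to match $\PrLomega$ with the subcategory of $\PrLcg$ cut out by preservation of compact objects. The only cosmetic difference is that the paper carries out (2) by induction on the number of opens (reducing to a pullback over two opens $U,V$), whereas you work directly over the finite poset of intersections; the logical content is identical. One small redundancy: your check that ``the $\QCoh(X)$-action on $\cC$ preserves compact objects precisely when each $\QCoh(U_S)$-action on $\cC_S$ does'' is unnecessary, since rigidity of $\QCoh(X)$ already forces every object of $\PrLcg_X$ to lie in $\PrLomega_X$ --- the genuine content is in matching the \emph{morphisms}, which you do correctly.
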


\begin{proof}
	The functor
	\[ \dAff\op \longrightarrow \Cat_\infty \]
	sending $S$ to $\PrL_S$ satisfies \'etale descent as a consequence of \cite[Theorem D.3.6.2 \& Corollary D.3.3.5]{Lurie_SAG}.
	Theorem D.5.3.1 in \emph{loc.\ cit.} shows that the subfunctor given by the rule $S \mapsto \PrLcg_S$ also satisfies \'etale descent.
	Let now $X$ be a (not necessarily affine) scheme and let $U_\bullet$ be an \'etale affine hypercover of $X$.
	It follows from \cite[Theorem 2.1.1]{Gaitsgory_1_affineness} that the canonical map
	\[ \PrL_X \longrightarrow \lim_{[n] \in \mathbf \Delta} \PrL_{U_n} \]
	is an equivalence, whence the conclusion.
	Furthermore, using \cite[Theorem 0.2]{Toen_Azumaya}, we see that the same statement holds for $\PrLcg_X$.
	
	\medskip
	
	We now turn to statement (2).
	Proceeding by induction on the number of opens forming a cover of $X$, we see that it is enough to prove the following statement: if $U$ and $V$ are two open Zariski subsets of $X$, then the canonical map
	\[ \PrLomega_X \longrightarrow \PrLomega_U \times_{\PrLomega_{U\cap V}} \PrLomega_V \]
	is an equivalence.
	Consider the following diagram
	\[ \begin{tikzcd}
		\PrLomega_X \arrow{r} \arrow{d} & \PrLomega_U \times_{\PrLomega_{U \cap V}} \PrLomega_V \arrow{d} \\
		\PrLcg_X \arrow{r} & \PrLcg_U \times_{\PrLcg_{U \cap V}} \PrLcg_V \ .
	\end{tikzcd} \]
	The vertical arrows are faithful, and the bottom horizontal map is an equivalence by point (1).
	It follows that the top horizontal map is faithful as well, and \cref{lem:compact_objects_finite_limit} implies that it is fully faithful as well.
	Essential surjectivity follows from the fact that the bottom map is an equivalence and the fact that the functor $\PrLomega \hookrightarrow \PrLcg$ is symmetric monoidal and full on equivalences (and that the restrictions $\QCoh(X) \to \QCoh(U)$ and $\QCoh(X) \to \QCoh(V)$ commute with compact objects).
\end{proof}

\begin{defin}
	Let $X$ be a quasi-compact and quasi-separated derived scheme.
	We say that a compactly generated, presentable stable $\QCoh(X)$-linear $\infty$-category $\cC$ is \emph{smooth and proper} if it is a dualizable object in $\PrLomega_X$.
	We let $\NcSmPr(X)$ denote the full subcategory of $\PrLomega_X$ spanned by smooth and proper $\infty$-categories.
\end{defin}

\begin{rem}
	\hfill
	\begin{enumerate}\itemsep=0.2cm
		\item We refer the reader to \cite[\S11]{Lurie_SAG} for a separate analysis of the notion of smoothness and of properness for stable $\infty$-categories.
		
		\item We use the terminology smooth and proper to emphasize that dualizability holds in $\PrLomega_X$ and not just in $\PrL_X$.
		Indeed, every object in $\PrLomega_X$ is dualizable when seen inside $\PrL_X$, but it needs not to be dualizable inside $\PrLomega_X$ itself.
		
		\item Thanks to \cref{thm:catQCoh_descent}-(2), one can check dualizability in $\PrLomega_X$ Zariski-locally on $X$.
	\end{enumerate}
\end{rem}

\subsection{Derived Azumaya algebras}

Derived Azumaya algebras were introduced in \cite{Toen_Azumaya} as derived counterpart of classical Azumaya algebras.
When $X$ is a scheme, a classical Azumaya algebra is sheaf $\cA$ of $\cO_X$-algebras which is \'etale-locally isomorphic to $\cEnd(V)$ for some (locally defined) vector bundle $V$.
In the derived setting, we give the following definition:

\begin{defin}
	Let $X$ be a derived scheme.
	A \emph{derived Azumaya algebra over $X$} is an object $\cA \in \mathrm{Alg}(\Perf(X))$ satisfying the following two conditions:
	\begin{enumerate}\itemsep=0.2cm
		\item the natural map
		\[ \cA \otimes_{\cO_X} \cA\op \longrightarrow \cHom_{\cO_X}(\cA, \cA) \]
		is an equivalence;
		\item $\cA$ is a compact generator for $\QCoh(X)$.
	\end{enumerate}
\end{defin}

\begin{rem}
	In \cite[Definition 2.1]{Toen_Azumaya} the above definition is given only for affine derived schemes.
	The first condition can obviously be checked \'etale locally.
	As for the second one, \cite[Theorem 0.2]{Toen_Azumaya} shows that it can be checked Zariski locally.
	Therefore, a derived Azumaya algebra over $X$ is equivalently a perfect complex on $X$ equipped with an associative multiplication which Zariski locally is a derived Azumaya algebra in the sense of \cite{Toen_Azumaya}.
\end{rem}

A derived Azumaya algebra $\cA$ over a derived scheme $X$ gives automatically rise to an object $\cA\textrm{-}\Mod \in \PrLomega_X$.

\begin{defin}
	Let $X$ be a derived scheme.
	We let $\dAz^{\mathrm{cat}}(X)$ be the full subcategory of $\PrLomega_X$ spanned by quasi-coherent sheaves of categories of the form $\cA \textrm{-} \Mod$, where $\cA$ is a derived Azumaya algebra over $X$.
	We refer to $\dAz^{\mathrm{cat}}(X)$ as the \emph{$\infty$-category of derived Azumaya algebras on $X$ up to Morita equivalence}.
	We let $\dAz(X)$ denote the maximal $\infty$-groupoid contained inside $\dAz^{\mathrm{cat}}(X)$.
\end{defin}

\begin{rem} \label{rem:Morita_equivalence_dAz}
	\begin{enumerate}\itemsep=0.2cm
		\item Let $X$ be a derived scheme and let $\cA, \cA'$ be two derived Azumaya algebras over $X$.
		Then $\cA$ and $\cA'$ are Morita equivalent (i.e.\ they are equivalent when seen as objects in $\dAz^{\mathrm{cat}}(X)$) if and only if there is a $\QCoh(X)$-linear equivalence $\cA \textrm{-} \Mod \simeq \cA' \textrm{-} \Mod$.
		
		\item Using \cite[Corollary 2.1.4]{Lurie_Brauer}, we deduce that a derived Azumaya algebra $\cA$ over $X$ is Morita equivalent to $\cO_X$ if and only it is quasi-isomorphic to an algebra of the form $\cEnd(\cF)$ for some perfect complex $\cF \in \Perf(X)$.
		
		\item Let $X$ be a derived scheme. Using \cite[Proposition 2.14]{Toen_Azumaya}, one shows that \'etale locally on $X$ a derived Azumaya algebra on $X$ is always Morita equivalent to $\cO_X$.
	\end{enumerate}
\end{rem}

Let $X$ be a derived scheme and let $\cA$ be a derived Azumaya algebra on $X$.
Then $\cA\op$ is again a derived Azumaya algebra, and
\[ \cA \textrm{-}\Mod \otimes_{\QCoh(X)} \cA\op\textrm{-} \Mod \simeq \cEnd(\cA) \textrm{-} \Mod . \]
\Cref{rem:Morita_equivalence_dAz}-(2) implies that $\cA\textrm{-}\Mod \in \PrLomega_X$ is an invertible object.
A major theorem of \cite{Toen_Azumaya} states that the vice-versa is true:

\begin{thm}[{To\"en, \cite[Proposition 2.5]{Toen_Azumaya}}] \label{thm:Toen_invertible_objects}
	Let $X$ be a derived scheme.
	The full subcategory $\dAz^{\mathrm{cat}}(X) \subseteq \PrLomega_X$ coincides with the full subcategory of $\PrLomega_X$ spanned by invertible objects.
\end{thm}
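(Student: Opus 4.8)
The plan is to establish the two inclusions of full subcategories separately, one of which is already recorded in the text. If $\cA$ is a derived Azumaya algebra over $X$ then $\cA\textrm{-}\Mod$ is invertible in $\PrLomega_X$, with inverse $\cA\op\textrm{-}\Mod$, as explained just before the statement using \cref{rem:Morita_equivalence_dAz}-(2); hence $\dAz^{\mathrm{cat}}(X)$ is contained in the full subcategory of invertible objects. For the converse, given an invertible $\cM \in \PrLomega_X$ I would reduce everything to one assertion, namely that \emph{$\cM$ admits a single compact generator $G$}. Granting this, the $\QCoh(X)$-linear Schwede--Shipley / Barr--Beck reconstruction of module $\infty$-categories (\cite{Lurie_Higher_algebra}) identifies $\cM$, $\QCoh(X)$-linearly, with $\cA\textrm{-}\Mod$ for the $\QCoh(X)$-enriched endomorphism $\mathbb E_1$-algebra $\cA \coloneqq \cEnd_\cM(G)$. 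It then remains to check that $\cA$ is a derived Azumaya algebra: that $\cA \in \mathrm{Alg}(\Perf(X))$, that $\cA \otimes_{\cO_X} \cA\op \to \cHom_{\cO_X}(\cA,\cA)$ is an equivalence, and that $\cA$ is a compact generator of $\QCoh(X)$. All three are \'etale-local conditions on $X$ — perfectness because it is fpqc-local, the other two as noted in the remark following the definition of derived Azumaya algebra — so one may verify them after base change along an \'etale cover trivialising $\cM$, over which $\cM \simeq \QCoh(U)$, the generator restricts to a perfect generator $P$ of $\QCoh(U)$, and $\cA|_U \simeq P^\vee \otimes_{\cO_U} P \simeq \cEnd_{\cO_U}(P)$; there the conditions reduce to standard facts about endomorphism algebras of perfect generators, in particular $\cEnd_{\cO_U}(P)$ being Morita trivial over $\cO_U$ by \cite[Corollary 2.1.4]{Lurie_Brauer}.

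The remaining work is to produce the single compact generator. By Zariski descent for $X \mapsto \PrLomega_X$ (\cref{thm:catQCoh_descent}-(2)) I would reduce to $X = \Spec(A)$ affine and show that $\cM$ is \emph{\'etale-locally trivial}. Here one treats first the case of a field $\kappa$: an invertible $\Mod_\kappa$-linear category is dualizable, hence smooth and proper over $\kappa$, hence admits a compact generator, so it is equivalent to $B\textrm{-}\Mod$ with $B = \cEnd_\kappa(G) \in \mathrm{Alg}(\Perf(\kappa))$; when $\kappa$ is separably closed the vanishing of $\mathrm{Br}(\kappa)$ and of $\rH^1\et(\Spec\kappa,\Z)$ forces $B$ to be Morita trivial, so $\cM \otimes_{\QCoh(X)} \Mod_\kappa \simeq \Mod_\kappa$ at every geometric point of $X$. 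A finite-presentation / spreading-out argument then propagates this trivialisation from the strict henselization at a geometric point to an \'etale neighbourhood of the underlying point of $X$, and — $X$ being quasi-compact — to an \'etale cover $\{U_i \to X\}$ over which $\cM|_{U_i} \simeq \QCoh(U_i)$. Each $\QCoh(U_i)$ plainly has a single compact generator, and to glue one on $X$ I would appeal to Toën's theorem on the fpqc-local nature of compact generators, \cite[Theorem 0.2]{Toen_Azumaya}, which supplies a single compact generator $G$ of $\cM$, closing the argument together with the previous paragraph.

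The step I expect to be the genuine obstacle is this last descent of a \emph{single} compact generator: trivialising $\cM$ on an \'etale cover shows a priori only that $\cM$ is compactly generated, and upgrading this to one global compact generator is precisely the content of \cite[Theorem 0.2]{Toen_Azumaya}, into which essentially all the serious homotopy-theoretic input is packaged. A milder secondary difficulty is the \'etale-local triviality of $\cM$ itself: the field case together with the vanishing of Brauer groups of separably closed fields makes it plausible, but the passage from geometric points to actual \'etale neighbourhoods requires some care with finite-presentation reductions and with the compatibility of invertible $\QCoh$-linear categories with filtered base change. Everything downstream — the reconstruction $\cM \simeq \cA\textrm{-}\Mod$, the perfectness of $\cA$, and the local verification of the Azumaya conditions — is then formal.
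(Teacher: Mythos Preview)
The paper does not supply its own proof of this statement: it is recorded as a theorem of To\"en with a bare citation to \cite[Proposition~2.5]{Toen_Azumaya}, and the text passes immediately to \cref{cor:etale_descent_Azumaya}. So there is no in-paper argument to compare your proposal against.

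That said, your outline has the right global shape---produce a single compact generator $G$, reconstruct $\cM \simeq \cA\textrm{-}\Mod$ with $\cA = \cEnd_\cM(G)$, verify the Azumaya conditions locally---and you correctly isolate To\"en's gluing theorem \cite[Theorem~0.2]{Toen_Azumaya} as the serious input for globalizing. But the step where you obtain the \emph{local} single compact generator has a genuine gap. Your field argument is circular: to conclude that an invertible $B\textrm{-}\Mod$ over a separably closed field $\kappa$ is Morita trivial you invoke the vanishing of $\mathrm{Br}(\kappa)$ and of $\rH^1\et(\Spec\kappa;\Z)$, but translating ``invertible in $\PrLomega_\kappa$'' into a class in $\rH^2\et \times \rH^1\et$ is exactly \cref{thm:Toen_dAz}, which in the paper (and in To\"en) is deduced \emph{from} the theorem you are proving. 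The intermediate claim ``smooth and proper $\Rightarrow$ admits a \emph{single} compact generator'' is also left unjustified.

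The detour through \'etale-local triviality is in fact unnecessary. Over any affine $U = \Spec(A)$, invertibility alone forces a single compact generator: if $\cN$ is an inverse to $\cM$, the unit $A \in \Mod_A \simeq \cM \otimes_A \cN$ is compact, hence a retract of a finite sum $\bigoplus_{i=1}^k m_i \boxtimes n_i$ with $m_i \in \cM^\omega$, $n_i \in \cN^\omega$; then $\bigoplus_i m_i$ generates $\cM$, since any $x \in \cM$ right-orthogonal to every $m_i$ satisfies $x \boxtimes n = 0$ for all $n \in \cN$ and the functor $x \mapsto (n \mapsto x \boxtimes n)$ is an equivalence $\cM \simeq \FunL_A(\cN,\Mod_A)$. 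This is To\"en's actual argument in the affine case. Feeding these affine-local generators into the gluing theorem yields a global one on $X$; then, restricting to affine opens and applying the affine case once more, one sees $\cA|_U$ is Azumaya, and the Azumaya condition is Zariski-local.
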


\begin{cor}\label{cor:etale_descent_Azumaya}
	The functors
	\[ \dAz^{\mathrm{cat}} \colon \dAff\op \longrightarrow \Cat_\infty, \qquad \dAz \colon \dAff\op \longrightarrow \cS \]
	satisfy \'etale descent.
\end{cor}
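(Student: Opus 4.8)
The plan is to prove étale descent for $\dAz^{\mathrm{cat}}$ first, and to deduce it for $\dAz$ afterwards. Since $\dAz(X)$ is by definition the maximal $\infty$-groupoid of $\dAz^{\mathrm{cat}}(X)$ and the core functor $(-)^\simeq\colon\Catinf\to\cS$ is a right adjoint, hence preserves limits, étale descent for $\dAz^{\mathrm{cat}}$ implies it for $\dAz$. To prove it for $\dAz^{\mathrm{cat}}$ I will use the standard fact that a presheaf of $\infty$-categories $\cF\colon\dAff\op\to\Catinf$ is an étale sheaf provided that (i) the presheaf of spaces $\cF(-)^\simeq$ is an étale sheaf, and (ii) for every $X\in\dAff$ and every pair $\cC,\cD\in\cF(X)$, the presheaf $(U\to X)\mapsto\Map_{\cF(U)}(\cC|_U,\cD|_U)$ on the small étale site of $X$ is a sheaf. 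This reduces the problem to checking descent at the level of the core and of mapping spaces separately.

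For (i): by \cref{thm:Toen_invertible_objects} the objects of $\dAz^{\mathrm{cat}}(X)$ are exactly the invertible objects of $\PrLomega_X$; by \cref{rem:Morita_equivalence_dAz}-(3) every such object becomes equivalent to $\QCoh(-)$ on a suitable étale cover of $X$, and conversely every $\cC\in\PrLcg_X$ that is étale-locally equivalent to $\QCoh$ is automatically invertible in $\PrLomega_X$: such a $\cC$ is classified by a class in $\rH^1\et(X;\mathrm{dPic})$ — the relevant automorphism sheaf of $\QCoh$ being the grouplike sheaf $\mathrm{dPic}$, whose sections act by tensoring with an invertible object and so preserve compacts — whence the object classified by the opposite class is an inverse of $\cC$ in $\PrLomega_X$. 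Thus $\dAz^{\mathrm{cat}}(-)^\simeq$ is the full subpresheaf of $\PrLcg(-)^\simeq$ on objects that are étale-locally $\simeq\QCoh$. Since $X\mapsto\PrLcg_X$ is an étale sheaf by \cref{thm:catQCoh_descent}-(1) and ``étale-locally $\simeq\QCoh$'' is an étale-local condition stable under gluing, this subpresheaf is again an étale sheaf. (Alternatively, (i) is Toën's theorem that the derived Brauer space is an fppf sheaf, \cite{Toen_Azumaya}.)

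For (ii): fix $\cC,\cD\in\dAz^{\mathrm{cat}}(X)$. As $\dAz^{\mathrm{cat}}(U)$ is a full subcategory of $\PrLomega_U$, and $\PrLomega_U$ is the subcategory of $\PrLcg_U$ with the same objects and with morphisms those functors preserving compact objects, the mapping space $\Map_{\dAz^{\mathrm{cat}}(U)}(\cC|_U,\cD|_U)$ is the union of those connected components of $\Map_{\PrLcg_U}(\cC|_U,\cD|_U)$ consisting of compact-preserving functors. Now $(U\to X)\mapsto\Map_{\PrLcg_U}(\cC|_U,\cD|_U)$ is an étale sheaf because $X\mapsto\PrLcg_X$ is a sheaf of $\infty$-categories. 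It therefore suffices to observe that ``$F$ preserves compact objects'' is an étale-local property of $F$: after an étale base change trivializing both $\cC$ and $\cD$, compactness of an object of $\QCoh$ (i.e.\ perfectness) is étale-local, and a colimit-preserving $\QCoh$-linear functor preserves compact objects iff it sends a fixed compact generator to a compact object, which can be tested after étale base change — this is precisely the fpqc-local nature of compact generators, \cite[Theorem 0.2]{Toen_Azumaya}, already invoked in the proof of \cref{thm:catQCoh_descent}. Hence $(U\to X)\mapsto\Map_{\dAz^{\mathrm{cat}}(U)}(\cC|_U,\cD|_U)$, a union of connected components of an étale sheaf cut out by an étale-local condition, is itself an étale sheaf, which is (ii).

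The main obstacle — and the reason this is not an immediate corollary of \cref{thm:catQCoh_descent} — is that $X\mapsto\PrLomega_X$ satisfies only Zariski descent, the obstruction being that \cref{lem:compact_objects_finite_limit} applies to finite diagrams while étale (hyper)covers are indexed by infinite ones. The substance is that restricting to invertible objects — equivalently, to forms of $\QCoh$ that are Morita-trivial étale-locally — restores étale descent, the decisive inputs being the étale-locality of compact objects and of the ``preserves compacts'' condition (\cite[Theorem 0.2]{Toen_Azumaya}) together with the étale-local triviality of derived Azumaya algebras (\cref{rem:Morita_equivalence_dAz}-(3)).
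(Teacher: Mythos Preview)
Your proof is correct, but it takes a longer route than the paper. The paper's argument hinges on a single observation: an object of $\PrLomega_X$ is invertible if and only if it is invertible in $\PrLcg_X$ (one direction is trivial since $\PrLomega_X \hookrightarrow \PrLcg_X$ is symmetric monoidal; for the other, any equivalence $\cC \otimes \cD \simeq \QCoh(X)$ in $\PrLcg_X$ automatically preserves compact objects, hence lies in $\PrLomega_X$). Granting this, $\dAz(X)$ is precisely the Picard groupoid of $\PrLcg_X$, and since taking invertible objects is a limit-preserving functor on symmetric monoidal $\infty$-categories, étale descent for $\dAz$ follows immediately from étale descent for $\PrLcg$ (Lurie, SAG D.5.3.1). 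This bypasses entirely the need to check mapping spaces separately.

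Your decomposition into (i) core and (ii) mapping spaces is a valid alternative; part (i) is essentially the paper's argument rephrased, while part (ii) does extra work that the paper's terse proof leaves implicit for $\dAz^{\mathrm{cat}}$. What your approach buys is an honest verification that the \emph{non-invertible} morphisms in $\dAz^{\mathrm{cat}}$ also satisfy descent---something the paper's proof does not spell out. One small correction: your appeal to \cite[Theorem 0.2]{Toen_Azumaya} in (ii) is a miscitation---that theorem concerns the étale-local nature of the \emph{existence} of a compact generator, not the compactness of an individual object. The fact you actually need is that for invertible $\cD \simeq \cA\textrm{-}\Mod$ (with $\cA$ Azumaya), compactness in $\cD$ is perfectness as an $\cA$-module, which in turn is equivalent to perfectness of the underlying $\cO_X$-module, and this is étale-local by standard descent for $\Perf$.
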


\personal{We are hiding the theorem on gluing of compact generators. From this perspective, the theorem could be restated saying that the natural map from $\dAz(X)$ to the value of the sheaf $\dAz$ evaluated on $X$ is an equivalence.
	This is obvious for affine schemes, but not at all obvious for global ones.}

\begin{proof}
	We first observe that an object in $\PrLomega_X$ is invertible if and only if it is invertible in $\PrLcg_X$.
	The conclusion therefore follows from \cite[D.5.3.1]{Lurie_SAG}, asserting that the assignment $X \mapsto \PrLcg_X$ satisfies \'etale descent on $\dAff$.
\end{proof}

Observe that \cref{rem:Morita_equivalence_dAz}-(3) implies that the natural map $\Spec(\Z) \to \mathbf{dAz}$ is an effective epimorphism.
Studying the loop stack, To\"en deduces:

\begin{thm}[{To\"en, \cite[Theorem 3.12]{Toen_Azumaya}}] \label{thm:Toen_dAz}
	There is a natural equivalence of stacks
	\[ \dAz \simeq \rK(\bbG_m, 2) \times \rK(\Z, 1) . \]
\end{thm}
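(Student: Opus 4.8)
The plan is to reproduce To\"en's argument: compute the loop stack of $\mathbf{dAz}$ and then recognize $\mathbf{dAz}$ as its delooping. By \cref{rem:Morita_equivalence_dAz}-(3) every derived Azumaya algebra is \'etale-locally Morita equivalent to $\cO$, which is exactly why the map $\Spec(\Z)\to\mathbf{dAz}$ recorded above is an effective epimorphism of \'etale sheaves; in particular $\mathbf{dAz}$ is connected. It is moreover pointed, by the trivial algebra $\cO$. In the $\infty$-topos of \'etale sheaves a pointed connected object is canonically the delooping of its based loop object, so
\[ \mathbf{dAz}\;\simeq\;\rB\big(\Omega_\ast\mathbf{dAz}\big),\qquad \Omega_\ast\mathbf{dAz}\;\coloneqq\;\ast\times_{\mathbf{dAz}}\ast , \]
and $\Omega_\ast\mathbf{dAz}$ is a group object in \'etale sheaves. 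It then remains to identify this loop sheaf.

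First I would evaluate $\Omega_\ast\mathbf{dAz}$ on an affine test scheme $S$. By \cref{thm:Toen_invertible_objects} the groupoid $\mathbf{dAz}(S)$ is the maximal subgroupoid of invertible objects of $\PrLomega_S$, and the basepoint is the tensor unit $\QCoh(S)=\cO_S\textrm{-}\Mod$; hence a point of $\Omega_\ast\mathbf{dAz}(S)=\ast\times_{\mathbf{dAz}(S)}\ast$ is the datum of two identifications of an invertible object with the unit, i.e.\ an element of $\Aut_{\PrLomega_S}(\QCoh(S))$. Now $\PrLomega_S=\Mod_{\QCoh(S)}(\PrLomega)$ has $\QCoh(S)$ as its unit, and $\QCoh(S)$-linear functors $\QCoh(S)\to\QCoh(S)$ that preserve compact objects are precisely the functors $-\otimes_{\cO_S}\cF$ with $\cF\in\Perf(S)$ (a standard consequence of the self-duality of $\QCoh(S)$, cf.\ \cite[\S4.8]{Lurie_Higher_algebra} and \cref{prop:compactly_generated_algebras}); composition corresponds to $\otimes_{\cO_S}$, so the automorphisms of the unit are exactly the invertible objects of $\QCoh(S)$. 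Therefore $\Omega_\ast\mathbf{dAz}$ is the derived Picard sheaf $\underline{\mathrm{Pic}}(\QCoh)$ sending $S$ to the Picard $\infty$-groupoid of $\QCoh(S)$, with group structure given by $\otimes_{\cO}$.

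Next I would compute $\underline{\mathrm{Pic}}(\QCoh)$ as an \'etale sheaf of grouplike $\bbE_\infty$-monoids. Zariski-locally every invertible object of $\QCoh$ is a shift $\cO[n]$ of the structure sheaf, so taking the ``degree'' $n$ defines a morphism $\underline{\mathrm{Pic}}(\QCoh)\twoheadrightarrow\underline{\Z}$ onto the constant sheaf, split \emph{as a morphism of grouplike $\bbE_\infty$-sheaves} by the monoidal section $n\mapsto\cO[n]$; its fiber over $0$ is the subsheaf of invertible objects of degree $0$, i.e.\ of line bundles. A line bundle is a $\bbG_m$-torsor with automorphism sheaf $\bbG_m$, so this fiber is $\rB\bbG_m$ (\'etale and fppf $\bbG_m$-torsors agreeing by descent). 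Combining with the splitting gives an equivalence of group sheaves $\underline{\mathrm{Pic}}(\QCoh)\simeq\rB\bbG_m\times\underline{\Z}$, and applying $\rB$, which preserves finite products, yields
\[ \mathbf{dAz}\;\simeq\;\rB\big(\rB\bbG_m\times\underline{\Z}\big)\;\simeq\;\rB^2\bbG_m\times\rB\underline{\Z}\;=\;\rK(\bbG_m,2)\times\rK(\Z,1) , \]
as claimed.

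The main obstacle is the middle step: identifying $\Aut_{\PrLomega_S}(\QCoh(S))$, uniformly in $S$, with the full derived Picard space of $\QCoh(S)$. This is where the rigidity and self-duality of $\QCoh(S)$ (\cref{thm:tensor_product_presentable}, \cref{prop:compactly_generated_algebras}) together with the compact-object-preservation condition built into $\PrLomega$ genuinely enter; once the loop sheaf is pinned down as $\rB\bbG_m\times\underline{\Z}$, the remaining delooping is formal. A secondary point demanding care is that the section $n\mapsto\cO[n]$ must be produced as a morphism of grouplike $\bbE_\infty$-sheaves, and not merely a pointwise set-theoretic section, so that taking $\rB$ of the product decomposition is meaningful.
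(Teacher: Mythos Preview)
The paper does not supply its own proof of this theorem: it merely records that $\Spec(\Z)\to\mathbf{dAz}$ is an effective epimorphism and then writes ``Studying the loop stack, To\"en deduces'' before citing \cite[Theorem 3.12]{Toen_Azumaya}. Your proposal is precisely an execution of that sketch --- show connectedness, identify the loop sheaf with the derived Picard stack $\rB\bbG_m\times\underline{\Z}$, and deloop --- and the details you supply are correct. There is nothing further to compare.
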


Motivated by the above theorem, we introduce the following notations:

\begin{defin}
	Let $F \in \dSt$ be a derived stack.
	The derived stack $\mathbf{dAz}(F)$ parametrizing families of derived Azumaya algebras on $F$ is defined as
	\[ \mathbf{dAz}(F) \coloneqq \bfMap(F, \dAz) . \]
\end{defin}

\begin{rem}
	Observe that the $\Spec(\mathbb Z)$-points of $\mathbf{dAz}(F)$ coincide with $\Map_{\dSt}(F, \dAz)$.
	We denote them simply by $\dAz(F)$.
\end{rem}

Finally, following \cite[Definition 3.14]{Toen_Azumaya} we introduce the derived Brauer group and its variants.
We start with the categorical derived Brauer group:

\begin{defin}
	Let $F \in \dSt$ be a derived stack.
	The \emph{categorical derived Brauer group of $F$} is the group $\dBr^{\mathrm{cat}}(F) \coloneqq \pi_0(\dAz(F))$.
\end{defin}

\begin{rem}
	Let $F$ be a derived stack.
	\begin{enumerate}\itemsep=0.2cm
		\item If $\cA$ is a derived Azumaya algebra on $F$ (defined globally \emph{up to quasi-isomorphism}\footnote{As opposed to \emph{up to Morita equivalence}.}), then it defines a class $[\cA] \in \dBr^{\mathrm{cat}}(F)$.
		
		\item The $\infty$-groupoid $\dAz(F)$ is in fact a $2$-homotopy type: write $\mathsf{dPic}(F)$ for the derived Picard groupoid of $F$ (that is, the maximal $\infty$-groupoid of invertible objects in $\QCoh(F)$).
		Then one has
		\[ \pi_1(\dAz(F)) \simeq \mathsf{dPic}(F) \ , \qquad \pi_2(\dAz(F)) \simeq \cO(F)^\times \ . \]
		Both homotopy groups are computed taking the trivial derived Azumaya algebra as basepoint.
	\end{enumerate}

\end{rem}

\begin{defin}
	Let $F \in \dSt$ be a derived stack.
	The \emph{derived Brauer group $\dBr(F)$ of $F$} is the subgroup of $\dBr^{\mathrm{cat}}(F)$ spanned by the classes of the form $[\cA]$ for $\cA$ a derived Azumaya algebra defined globally on $F$.
\end{defin}

\begin{warning}
	In this paper, $\dSt$ denotes the $\infty$-category of hypercomplete \emph{\'etale} sheaves on $\dAff$.
	It follows that the above derived Brauer groups are \emph{\'etale versions} of the derived Brauer groups considered in \cite{Toen_Azumaya}.
	The arguments given in loc.\ cit.\ apply verbatim for these \'etale derived Brauer groups.
\end{warning}

By definition, for every derived stack $F$ there is a natural inclusion $\dBr(F) \subseteq \dBr^{\mathrm{cat}}(F)$.
In \cite{Toen_Azumaya} it is shown that this map is surjective in many cases:

\begin{thm}[{To\"en, \cite[Corollary 4.8]{Toen_Azumaya}}] \label{thm:Toen_global_generation}
	Let $X$ be a quasi-compact and quasi-separated derived scheme.
	Then the natural map $\dBr(X) \subseteq \dBr^{\mathrm{cat}}(X)$ is an isomorphism.
\end{thm}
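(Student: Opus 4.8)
The plan is to establish the nontrivial inclusion $\dBr^{\mathrm{cat}}(X) \subseteq \dBr(X)$, the reverse one holding by definition. Fix a class $\alpha \in \dBr^{\mathrm{cat}}(X) = \pi_0\big(\Map_{\dSt}(X, \dAz)\big)$. By \cref{thm:Toen_invertible_objects}, on affines $\dAz(-)$ is the maximal groupoid of invertible objects of $\PrLomega_{(-)}$; since by \cref{thm:catQCoh_descent}-(1) the functor $S \mapsto \PrLcg_S$ satisfies étale descent, since invertibility in $\PrLomega$ and in $\PrLcg$ agree (as recalled in the proof of \cref{cor:etale_descent_Azumaya}), and since $\dAz$ is an étale sheaf (\cref{cor:etale_descent_Azumaya}), these two étale sheaves agree on all qcqs derived schemes. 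Thus $\alpha$ is represented by an invertible object $\cC \in \PrLomega_X$, which by \cref{rem:Morita_equivalence_dAz}-(3) is étale-locally on $X$ equivalent to $\QCoh(-)$ --- equivalently, $\cC \simeq \cB\textrm{-}\Mod$ for a derived Azumaya algebra $\cB$ defined only étale-locally. The task is to upgrade this to a global presentation.

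First I would produce a global compact generator of $\cC$. Étale-locally $\cC$ is equivalent to $\QCoh$ of an affine derived scheme, hence admits a compact generator étale-locally (the structure sheaf, or the local algebra $\cB$ itself viewed as a module). Here I invoke Toën's theorem on the fpqc-local nature of compact generators, \cite[Theorem 0.2]{Toen_Azumaya}: from the local generators one obtains a single object $G \in \cC$ that is compact and generates $\cC$ over $\QCoh(X)$. This is the step I expect to be the main obstacle, and it is precisely the non-formal content of the statement; everything else is bookkeeping around it.

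Given such a $G$, I set $\cA \coloneqq \cEnd_{\QCoh(X)}(G) \in \mathrm{Alg}(\QCoh(X))$. By the $\infty$-categorical module-recognition theorem (e.g.\ \cite[Corollary 2.1.4]{Lurie_Brauer}, or directly \cite[Proposition 2.5]{Toen_Azumaya}), the fact that $G$ is a compact generator yields a $\QCoh(X)$-linear equivalence $\cC \simeq \cA\textrm{-}\Mod$. It then remains to verify that $\cA$ is a derived Azumaya algebra over $X$: that $\cA \in \Perf(X)$, that the natural map $\cA \otimes_{\cO_X} \cA\op \to \cHom_{\cO_X}(\cA,\cA)$ is an equivalence, and that $\cA$ generates $\QCoh(X)$. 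Each of these conditions is étale-local on $X$ (the first and third by \cite[Theorem 0.2]{Toen_Azumaya}, as in the remark following the definition of derived Azumaya algebra; the second manifestly so), and étale-locally $\cA \simeq \cEnd(E) \simeq E \otimes E^\vee$ for $E$ a perfect compact generator of $\QCoh(-)$, where all three properties are standard.

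Finally, $\cA$ is then a derived Azumaya algebra defined globally on $X$ with $\cA\textrm{-}\Mod \simeq \cC$, so $[\cA] = \alpha$ lies in $\dBr(X)$; hence $\dBr^{\mathrm{cat}}(X) \subseteq \dBr(X)$ and the two coincide. One could instead organize the argument via the splitting $\dAz \simeq \rK(\bbG_m,2) \times \rK(\Z,1)$ of \cref{thm:Toen_dAz} --- handling the $\rK(\Z,1)$-factor with shifts $\cO_X[n]$ of the structure sheaf and the $\rK(\bbG_m,2)$-factor with the Brauer class proper --- but since the crux is the gluing of compact generators, I would present it in the form above.
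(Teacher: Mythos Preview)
The paper does not supply a proof of this theorem: it is stated with attribution to To\"en \cite[Corollary 4.8]{Toen_Azumaya} and used as a black box, with no proof environment following it. Your proposal is therefore not to be compared against anything in the paper itself. That said, your sketch is a faithful outline of To\"en's original argument --- the crux being the fpqc-local nature of compact generators (\cite[Theorem 0.2]{Toen_Azumaya}), from which one extracts a global compact generator and reads off the derived Azumaya algebra as its endomorphism object --- so it is correct in substance and approach.
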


\begin{rem}
	Let $X$ be a quasi-compact and quasi-separated derived scheme.
	Then combining Theorems \ref{thm:Toen_dAz} and \ref{thm:Toen_global_generation} we obtain
	\[ \dBr(X) \simeq \pi_0 \Map_{\dSt}( X, \rK(\bbG_m,2) \times \rK(\Z, 1) ) \simeq \rH^2_{\mathrm{\'et}}(X, \bbG_m) \times \rH^1_{\mathrm{\'et}}(X, \Z) . \]
	In particular, every class $\alpha \in \rH^2_{\mathrm{\'et}}(X, \bbG_m)$ (even non-torsion ones!) can be realized as the class of some \emph{derived} Azumaya algebra over $X$.
\end{rem}

\section{Categorified Beauville-Laszlo theorem}

In this section we provide a categorification of the classical Beauville-Laszlo theorem.
Recall the basic setup: let $S = \Spec(A)$ and $T = \Spec(B)$ be two affine derived schemes.
Let $I \subseteq \pi_0(A)$ be a finitely generated ideal.
Let
\[ \begin{tikzcd}
	V \arrow{r}{g} \arrow{d}{j} & U \arrow{d}{i} \\
	T \arrow{r}{f} & S \ .
\end{tikzcd} \]
be a pullback diagram, where $U$ denotes the open complementary of the closed subset determined by $I$.
For the rest of this section, we will be working under the following assumption:

\begin{assumption}\label{assumption:Beauville_Laszlo}
	The map $f \colon T \to S$ induces an equivalence between the $I$-completions $A^\wedge_I \to B^\wedge_I$.
\end{assumption}

\noindent Then it is shown in \cite[Theorem 7.4.0.1]{Lurie_SAG} that the canonical map
\[ \QCoh(S) \longrightarrow \QCoh(T) \times_{\QCoh(V)} \QCoh(U) \]
is an equivalence of stable $\infty$-categories.
This is a far-reaching generalization of the classical Beauville-Laszlo theorem, originally formulated for vector bundles.
The main goal of this section is to prove the following \emph{categorified} version:

\begin{thm}\label{thm:Beauville_Laszlo_omega}
	The canonical restriction functor
	\begin{equation}\label{eq:categorified_Beauville_Laszlo}
		\PrLomega_S \longrightarrow \PrLomega_T \times_{\PrLomega_V} \PrLomega_U
	\end{equation}
	is an equivalence of symmetric monoidal $\infty$-categories.
\end{thm}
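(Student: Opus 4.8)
The plan is to bootstrap the categorified statement from the non-categorified Beauville--Laszlo equivalence for $\QCoh$ recorded above (Lurie, \cite[Theorem 7.4.0.1]{Lurie_SAG}), using the module-theoretic description of $\PrLomega_X$ together with the compatibility of $\Mod_{(-)}$ with limits of algebras. First I would observe that, since $\QCoh(S) \simeq \QCoh(T) \times_{\QCoh(V)} \QCoh(U)$ as \emph{symmetric monoidal} stable $\infty$-categories (the monoidal refinement follows because restriction functors are symmetric monoidal and limits in $\CAlg(\PrL)$ are computed underlying), and since each of $\QCoh(T), \QCoh(U), \QCoh(V)$ is rigid and compactly generated, \cref{prop:compactly_generated_algebras} presents all four as commutative algebras in $\PrLomega$. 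The heart of the argument is then the following general fact: if $\cA \simeq \cA_1 \times_{\cA_0} \cA_2$ is a pullback of rigid compactly generated algebras in $\PrLomega$ along functors preserving compact objects, then base change induces an equivalence
\[ \Mod_{\cA}(\PrLomega) \longrightarrow \Mod_{\cA_1}(\PrLomega) \times_{\Mod_{\cA_0}(\PrLomega)} \Mod_{\cA_2}(\PrLomega) . \]
Granting this, applying it to $\cA = \QCoh(S)$ and unwinding the definition $\PrLomega_X = \Mod_{\QCoh(X)}(\PrLomega)$ yields exactly \eqref{eq:categorified_Beauville_Laszlo}, and the symmetric monoidal structure is transported along the way since each base-change functor $\Mod_{\cA} \to \Mod_{\cA_i}$ is symmetric monoidal (being relative tensor product against $\cA_i$).

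To prove the displayed module-category statement I would split it into full faithfulness and essential surjectivity. For full faithfulness: given $\cM, \cN \in \Mod_{\QCoh(S)}(\PrLomega)$, one wants $\Map(\cM,\cN)$ to be the limit of the mapping spaces after base change; here I would use that $\cM \simeq \cM_T \times_{\cM_V} \cM_U$ as a $\QCoh(S)$-module (this is the nontrivial input and needs the Beauville--Laszlo gluing for the module itself — it reduces to the $\QCoh$-statement applied internally, exploiting that $\cM$, being compactly generated, is dualizable in $\PrL_S$ so that $\Mod_{\QCoh(S)}$-linear data glues along the Zariski-type cover $\{T,U\}$ with "intersection" $V$, using \cref{assumption:Beauville_Laszlo} exactly as in Lurie's proof), and then that $\PrLomega_S$-linear mapping spaces are computed as $\QCoh(S)$-linear functor categories, which themselves satisfy the same gluing. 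For essential surjectivity: given compatible modules $\cM_T, \cM_U, \cM_V$ over the three rings, form $\cM \coloneqq \cM_T \times_{\cM_V} \cM_U$ in $\PrL$; it is a $\QCoh(S)$-module via the limit presentation of $\QCoh(S)$, it is compactly generated with compact objects detected componentwise by \cref{lem:compact_objects_finite_limit}, and one checks base change recovers the original three modules — the last point again using \cref{assumption:Beauville_Laszlo} to control the completion direction. This is the same strategy already deployed in the proof of \cref{thm:catQCoh_descent}-(2) for a Zariski cover, and the formal skeleton carries over verbatim once the Beauville--Laszlo square is seen to behave like a covering square.

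The main obstacle I anticipate is precisely the gluing of $\QCoh(S)$-modules along the Beauville--Laszlo square, i.e.\ promoting Lurie's equivalence $\QCoh(S) \simeq \QCoh(T) \times_{\QCoh(V)} \QCoh(U)$ from a statement about the categories themselves to a statement about module categories over them. Unlike the Zariski case of \cref{thm:catQCoh_descent}, $\{T \to S, U \to S\}$ is not a Zariski (or even flat) cover, so one cannot invoke $1$-affineness or a descent theorem off the shelf; instead one must check by hand that the $\QCoh(S)$-linear structure on $\cM_T \times_{\cM_V} \cM_U$ is the universal one, which amounts to re-running the complete--local--nilpotent module formalism of \cite[\S7]{Lurie_SAG} relative to an arbitrary module category in place of $\QCoh$ itself. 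I expect this to work because the relevant statements (a module is determined by its restriction to $U$, its $I$-completion, and the gluing datum on $V$) are all formulated in a way that only uses colimits, the projection formula, and rigidity, all of which are available; but writing the argument carefully — in particular verifying that completion of $\cM$ along $I$ agrees with $\cM_T$ under \cref{assumption:Beauville_Laszlo} — is where the real work lies. Once that is in place, the passage to $\PrLomega$ (as opposed to $\PrL$ or $\PrLcg$) is handled exactly as in \cref{thm:catQCoh_descent}-(2): the inclusion $\PrLomega \hookrightarrow \PrLcg$ is symmetric monoidal, faithful, and an equivalence on cores, and \cref{lem:compact_objects_finite_limit} controls compactness of glued objects, so the equivalence for $\PrLcg_{(-)}$ descends to one for $\PrLomega_{(-)}$.
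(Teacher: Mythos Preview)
Your overall architecture matches the paper's: set up the restriction/gluing adjunction, prove the unit is an equivalence on compactly generated (hence dualizable) modules by tensoring the Beauville--Laszlo square for $\QCoh$ with $\cM$, and prove the counit is an equivalence by checking that base change of the glued module recovers the inputs. The paper does exactly this (see \cref{lem:Beauville_Laszlo_full_faitfhulness}, \cref{lem:essential_surjectivity}).

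However, there is a genuine gap in your essential surjectivity step. You write that $\cM \coloneqq \cM_T \times_{\cM_V} \cM_U$ ``is compactly generated with compact objects detected componentwise by \cref{lem:compact_objects_finite_limit}.'' That lemma only says that an object whose components are compact is itself compact; it says nothing about whether such objects \emph{generate} $\cM$. A priori, given a compact generator $G$ of $\cM_U$, there is no reason its image in $\cM_V$ should lift to a compact object of $\cM_T$, so you cannot produce any compact object of $\cM$ with $U$-component $G$. This is the crux of the paper's argument and is handled in \cref{prop:Thomason_trick}: one uses the semiorthogonal decomposition $(\cM_T)_K \subset \cM_T$ of \cref{lem:semiorthogonal_decomp} to get compact objects supported on the closed complement (which lift for free, their $U$-component being zero), and then Thomason's trick (replace $G$ by $G \oplus G[1]$ to kill the $K_0$-obstruction) to lift compact generators of $\cM_U$ to compact objects of $\cM_T$ through the localization $\cM_T \to \cM_V$. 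Without this, you do not know that $\Psi$ lands in $\PrLomega_S$ rather than merely $\PrL_S$, and the whole $\PrLomega$-statement collapses.

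Relatedly, the ``general fact'' you invoke in the first paragraph---that a pullback of rigid compactly generated algebras in $\PrLomega$ induces a pullback of their module $\infty$-categories---is not formal and is not available off the shelf; the paper's proof is precisely a verification of this in the specific Beauville--Laszlo situation, and the compact generation step above is where the geometry (the open/closed decomposition of $T$ along the support of $I$) enters. The obstacle you flag at the end, namely checking that the glued module base-changes back correctly, is real but comparatively routine (this is \cref{lem:essential_surjectivity}); the step you underestimated is the harder one.
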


Passing to dualizable (resp.\ invertible) objects in the above equivalence, we immediately obtain:

\begin{cor}
	The equivalence \eqref{eq:categorified_Beauville_Laszlo} restricts to equivalences 
	\[ \NcSmPr_S \simeq \NcSmPr_T \times_{\NcSmPr_V} \NcSmPr_U \qquad \text{and} \qquad \dAz_S \simeq \dAz_T \times_{\dAz_V} \dAz_U \ . \]
	In particular, we obtain a long exact sequence
	\begin{center}
		\begin{tikzpicture}[descr/.style={fill=white,inner sep=1.5pt}]
			\matrix (m) [
			matrix of math nodes,
			row sep=1em,
			column sep=2.5em,
			text height=1.5ex, text depth=0.25ex
			]
			{ 0 & \cO(S)^\times & \cO(U)^\times \oplus \cO(T)^\times & \cO(V)^\times \\
				& \mathrm{dPic}(S) & \mathrm{dPic}(U) \oplus \mathrm{dPic}(T) & \mathrm{dPic}(V) \\
				& \dBr(S) & \dBr(U) \oplus \dBr(T) & \dBr(V) \\
			};
			
			\path[overlay,->, font=\scriptsize,>=latex]
			(m-1-1) edge (m-1-2)
			(m-1-2) edge (m-1-3)
			(m-1-3) edge (m-1-4)
			(m-1-4) edge[out=355,in=175] node[descr,yshift=0.3ex] {$\delta_{0}$} (m-2-2)
			(m-2-2) edge (m-2-3)
			(m-2-3) edge (m-2-4)
			(m-2-4) edge[out=355,in=175] node[descr, yshift=0.3ex]{$\delta_1$} (m-3-2)
			(m-3-2) edge (m-3-3)
			(m-3-3) edge (m-3-4);
		\end{tikzpicture}
	\end{center}
\end{cor}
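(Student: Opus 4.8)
The plan is to deduce the Corollary formally from \cref{thm:Beauville_Laszlo_omega}, using only that the passage to dualizable objects and to the Picard spectrum commutes with limits of symmetric monoidal $\infty$-categories; the single non-formal ingredient is \cref{thm:Beauville_Laszlo_omega} itself.

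The first step is to record two elementary facts. A symmetric monoidal functor preserves duals, so a symmetric monoidal \emph{equivalence} both preserves and reflects dualizability (apply the statement to the inverse equivalence), and likewise preserves and reflects invertibility. Secondly, in a fiber product of symmetric monoidal $\infty$-categories along symmetric monoidal functors, an object $(\cC_T,\cC_U)$ together with its gluing equivalence between the restrictions to $V$ is dualizable (resp.\ invertible) precisely when $\cC_T$ and $\cC_U$ are: one implication is that the two projection functors are symmetric monoidal; for the converse, the componentwise duals (resp.\ inverses) $\cC_T^\vee$ and $\cC_U^\vee$ acquire a gluing equivalence over $V$ by dualizing the given one (restriction to $V$ being symmetric monoidal), so $(\cC_T^\vee,\cC_U^\vee)$ is a genuine object of the fiber product, and the evaluation and coevaluation maps, supplied compatibly in each factor, assemble into the required (co)unit.

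Next I would combine these facts with \cref{thm:Beauville_Laszlo_omega}. By definition $\NcSmPr_{(-)}$ is the full subcategory of dualizable objects of $\PrLomega_{(-)}$, so \eqref{eq:categorified_Beauville_Laszlo} restricts to the equivalence $\NcSmPr_S\simeq\NcSmPr_T\times_{\NcSmPr_V}\NcSmPr_U$ (noting that the gluing datum of a pair of dualizable objects automatically lies in $\NcSmPr_V$). For the $\dAz$'s and the exact sequence, I would use that, by \cref{thm:Toen_invertible_objects}, $\dAz(X)$ is the Picard space of the symmetric monoidal $\infty$-category $\PrLomega_X$, hence refines functorially in $X$ to a connective spectrum, and that the Picard-spectrum functor carries limits of symmetric monoidal $\infty$-categories to limits of spectra. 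Thus \cref{thm:Beauville_Laszlo_omega} produces an equivalence $\dAz_S\simeq\dAz_T\times_{\dAz_V}\dAz_U$ of connective spectra exhibiting $\dAz_S$ as a pullback, equivalently a Mayer--Vietoris fiber sequence
\[ \dAz_S\longrightarrow\dAz_T\oplus\dAz_U\longrightarrow\dAz_V . \]
The long exact sequence of homotopy groups of this fiber sequence, together with the identifications $\pi_0\dAz(X)\simeq\dBr(X)$ (by definition of $\dBr^{\mathrm{cat}}$ and \cref{thm:Toen_global_generation}, as $S$, $T$, $U$, $V$ are all qcqs), $\pi_1\dAz(X)\simeq\mathrm{dPic}(X)$, $\pi_2\dAz(X)\simeq\cO(X)^\times$ and the vanishing $\pi_n\dAz(X)=0$ for $n\ge3$ (since $\dAz(X)$ is a $2$-truncated space), is exactly the displayed sequence; its maps are the ones induced by base change along $T\to S$, $U\to S$, $V\to S$, that is, pullback of units, of invertible complexes, and of derived Azumaya algebras on $\pi_2$, $\pi_1$, $\pi_0$ respectively.

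I expect no genuine obstacle: everything is formal given \cref{thm:Beauville_Laszlo_omega}. The two points deserving a line of care are the componentwise detection of dualizable and invertible objects in the fiber product — one must check that the dual/inverse data do glue — and the routine identification of the Mayer--Vietoris terms with $\cO^\times$, $\mathrm{dPic}$, and $\dBr$.
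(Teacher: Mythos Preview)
Your proposal is correct and follows exactly the route the paper takes: the paper does not give a separate proof, but simply writes ``Passing to dualizable (resp.\ invertible) objects in the above equivalence, we immediately obtain'' before stating the corollary. You have spelled out in detail what that sentence means---the componentwise detection of dualizables and invertibles in a fiber product, the Mayer--Vietoris fiber sequence from the resulting pullback of Picard spaces, and the identification of the homotopy groups---which is precisely the formal unpacking the paper leaves implicit.
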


\subsection{General setup}

Consider the commutative square
\[ \begin{tikzcd}
	\PrL_S \arrow{r}{f^\ast} \arrow{d}{i^\ast} & \PrL_T \arrow{d}{j^\ast} \\
	\PrL_U \arrow{r}{g^\ast} & \PrL_V \ .
\end{tikzcd} \]
This induces a canonical functor
\[ \Phi \colon \PrL_S \longrightarrow \PrL_T\times_{\PrL_V} \PrL_U \ , \]
which admits a right adjoint $\Psi$ that can be explicitly described as follows.
Let us represent an object in the fiber product $\PrL_T \times_{\PrL_V} \PrL_U$ as a quintuple $\underline{\cC} = (\cC_T, \cC_U, \cC_V, \alpha, \beta)$, where $\cC_T \in \PrL_T$, $\cC_U \in \PrL_U$, $\cC_V \in \PrL_V$ and 
\[ \alpha \colon \cC_T \longrightarrow \cC_V \qquad \text{(resp.\ } \beta \colon \cC_U \longrightarrow \cC_V \text{)} \]
is a $\QCoh(T)$-linear (resp.\ $\QCoh(U)$-linear) functor inducing an equivalence $\cC_T \otimes_{\QCoh(T)} \QCoh(V) \simeq \cC_V$ (resp.\ $\cC_U \otimes_{\QCoh(U)} \QCoh(V) \simeq \cC_V$).
Then $\Psi$ sends such a datum to the fiber product
\[ \begin{tikzcd}
	\Psi(\underline{\cC}) \arrow{r} \arrow{d} & f_\ast(\cC_T) \arrow{d}{\alpha} \\
	i_\ast(\cC_U) \arrow{r}{\beta} & i_\ast g_\ast(\cC_V) \ .
\end{tikzcd} \]
The functor $\Phi$ restricts to the functor \eqref{eq:categorified_Beauville_Laszlo}:
\[ \Phi^\omega \colon \PrLomega_S \longrightarrow \PrLomega_T \times_{\PrLomega_V} \PrLomega_U \ . \]
Notice that a priori $\Psi$ does not take $\PrLomega_T \times_{\PrLomega_V} \PrLomega_U$ to $\PrLomega_S$.
Nevertheless, a mild variation of \cite[Theorem 0.2]{Toen_Azumaya} will show that this is true (see \cref{prop:Thomason_trick}).

\subsection{Full faithfulness}

We start discussing the full faithfulness of $\Phi^\omega$.

\begin{lem}\label{lem:Beauville_Laszlo_full_faitfhulness}
	Assume that $\cC \in \PrL_S$ is dualizable.
	Then the unit transformation
	\[ \cC \longrightarrow \Psi(\Phi(\cC)) \]
	is an equivalence.
	In particular, $\Phi$ restricts to a fully faithful functor
	\[ \Phi \colon \PrLcg_S \longrightarrow \PrLcg_T \times_{\PrLcg_V} \PrLcg_U \ . \]
\end{lem}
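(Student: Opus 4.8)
The statement to prove is \cref{lem:Beauville_Laszlo_full_faitfhulness}: for $\cC \in \PrL_S$ dualizable, the unit $\cC \to \Psi(\Phi(\cC))$ is an equivalence, and consequently $\Phi$ restricts to a fully faithful functor on $\PrLcg$. My plan is to reduce the statement about general dualizable $\cC$ to the unit object $\QCoh(S)$ by a base-change/tensoring argument, and to handle the case $\cC = \QCoh(S)$ directly using the Lurie Beauville--Laszlo theorem \cite[Theorem 7.4.0.1]{Lurie_SAG} recalled above. The second assertion then follows because full faithfulness of a right adjoint's associated comparison functor is equivalent to the unit being an equivalence, and because compactly generated objects of $\PrL_S$ are precisely the dualizable objects together with the observation (via \cref{lem:compact_objects_finite_limit}) that $\Psi$ preserves compact generation.

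\textbf{Step 1: compute $\Psi(\Phi(\cC))$ as a pullback.} Unwinding the explicit description of $\Psi$, the object $\Psi(\Phi(\cC))$ is the fiber product in $\PrL_S$ of $f_\ast f^\ast \cC \to (i g)_\ast (ig)^\ast \cC \leftarrow i_\ast i^\ast \cC$. So the unit map $\cC \to \Psi(\Phi(\cC))$ is the comparison of $\cC$ with this pullback; proving it is an equivalence amounts to showing the square
\[ \begin{tikzcd}
	\cC \arrow{r} \arrow{d} & f_\ast f^\ast \cC \arrow{d} \\
	i_\ast i^\ast \cC \arrow{r} & (ig)_\ast (ig)^\ast \cC
\end{tikzcd} \]
is a pullback in $\PrL_S$ (equivalently in $\PrL$, since limits in $\PrL_S$ are computed in $\PrL$).

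\textbf{Step 2: reduce to the unit.} Since $\cC$ is dualizable in $\PrL_S = \Mod_{\QCoh(S)}(\PrL)$, I would use that $\cC \simeq \cC \otimes_{\QCoh(S)} \QCoh(S)$ and that for each of the maps $f, i, ig$ the relevant pushforward satisfies a projection-formula / base-change identity of the form $h_\ast h^\ast \cC \simeq \cC \otimes_{\QCoh(S)} h_\ast h^\ast \QCoh(S) = \cC \otimes_{\QCoh(S)} \QCoh(\bullet)$ — this holds because $h^\ast$ is symmetric monoidal, $\cC$ is dualizable, and tensoring a dualizable module against the (co)unit identifies $h_\ast(\cC \otimes h^\ast M) \simeq \cC \otimes h_\ast M$. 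Since $\otimes_{\QCoh(S)}$ commutes with finite limits in the variable $\QCoh(\bullet)$ when the other factor is dualizable (dualizable objects are flat, equivalently $-\otimes \cC$ is exact), the square in Step 1 is obtained from the square
\[ \begin{tikzcd}
	\QCoh(S) \arrow{r} \arrow{d} & \QCoh(T) \arrow{d} \\
	\QCoh(U) \arrow{r} & \QCoh(V)
\end{tikzcd} \]
by applying $\cC \otimes_{\QCoh(S)} -$. The latter square is a pullback by \cite[Theorem 7.4.0.1]{Lurie_SAG} under \cref{assumption:Beauville_Laszlo}, hence so is the former.

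\textbf{Step 3: deduce the full faithfulness statement.} For the adjunction $\Phi \dashv \Psi$, the restriction of $\Phi$ to a full subcategory is fully faithful precisely when the unit is an equivalence on that subcategory. By Step 1--2 the unit is an equivalence on all dualizable objects of $\PrL_S$; since $\QCoh(S)$ is rigid, every object of $\PrLcg_S$ is dualizable in $\PrL_S$ (it is a module over the rigid algebra $\QCoh(S)$, so dualizability in $\PrLcg_S$ versus $\PrL_S$ need not be confused — only dualizability in $\PrL_S$ is needed here, and that holds for all of $\PrLcg_S$). One also checks $\Psi$ carries the fiber product $\PrLcg_T \times_{\PrLcg_V} \PrLcg_U$ into $\PrLcg_S$: the components of $\Psi(\underline\cC)$ are compact generation is preserved under the relevant pushforwards and finite limits by \cref{lem:compact_objects_finite_limit} combined with \cite[Theorem 0.2]{Toen_Azumaya}. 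Therefore $\Phi$ restricts to a fully faithful functor $\PrLcg_S \to \PrLcg_T \times_{\PrLcg_V} \PrLcg_U$, as claimed.

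\textbf{Main obstacle.} The delicate point is Step 2: justifying the projection-formula identities $h_\ast h^\ast \cC \simeq \cC \otimes_{\QCoh(S)} \QCoh(\bullet)$ and the interchange of $\cC \otimes_{\QCoh(S)} -$ with the finite limit defining the fiber product. For $f$ and $i$ (flat, or an open immersion) base change is standard, but for the closed/completion direction one must be careful that $f_\ast$ and $(ig)_\ast$ are still $\QCoh(S)$-linear and that $\cC$ dualizable really does make $\cC \otimes -$ commute with this particular pullback. This is exactly the place where dualizability of $\cC$ is essential — without it the unit is genuinely not an equivalence, which is why the theorem is stated only for $\PrLomega$ and $\PrLcg$ and not for all of $\PrL$.
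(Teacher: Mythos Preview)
Your approach is correct and essentially identical to the paper's: reduce the unit square to the Beauville--Laszlo pullback \cite[Theorem 7.4.0.1]{Lurie_SAG} tensored with $\cC$, use dualizability to preserve the pullback, and invoke rigidity of $\QCoh(S)$ (the paper cites \cite[Proposition D.5.4]{Gaitsgory_1_affineness}) to conclude that compactly generated implies dualizable in $\PrL_S$.

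Two remarks on presentation. First, your ``main obstacle'' is not one: in $\PrL_S = \Mod_{\QCoh(S)}(\PrL)$ the functor $h^\ast$ is \emph{defined} as $- \otimes_{\QCoh(S)} \QCoh(\bullet)$ and $h_\ast$ is restriction of scalars, so the identification $h_\ast h^\ast \cC \simeq \cC \otimes_{\QCoh(S)} \QCoh(\bullet)$ is tautological---no projection formula or flatness of $f$ is needed. The only genuine input is that tensoring with a dualizable object preserves limits. Second, the discussion in Step~3 about $\Psi$ preserving compact generation is not needed here: since $\PrLcg_S \hookrightarrow \PrL_S$ is fully faithful, the unit being an equivalence on $\PrLcg_S$ already gives full faithfulness of the restricted $\Phi$. (That $\Psi$ does preserve compact generation is the content of \cref{prop:Thomason_trick} and requires real work.)
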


\begin{proof}
	Unraveling the definitions, we have to check that for every $\cC\in \PrLcg_S$, the square
	\[ \begin{tikzcd}
		\cC \arrow{r} \arrow{d} & \cC \otimes_{\QCoh(S)} \QCoh(T) \arrow{d} \arrow{d} \\
		\cC \otimes_{\QCoh(S)} \QCoh(U) \arrow{r} & \cC \otimes_{\QCoh(S)} \QCoh(V) 
	\end{tikzcd} \]
	is a pullback square.
	Thanks to \cite[Theorem 7.4.0.1]{Lurie_SAG}, we know that the square
	\begin{equation}\label{eq:master_Beauville_Laszlo}
		\begin{tikzcd}
			\QCoh(S) \arrow{r}{f^\ast} \arrow{d}{i^\ast} & \QCoh(T) \arrow{d}{j^\ast} \\
			\QCoh(U) \arrow{r}{g^\ast} & \QCoh(V)
		\end{tikzcd}
	\end{equation}
	is a pullback square.
	Since $\cC$ is dualizable in $\PrL_S$, \eqref{eq:master_Beauville_Laszlo} remains a pullback after tensoring with $\cC$, whence the first half.
	As for the second statement, observe that $\QCoh(S)$ is a rigid and compactly generated symmetric monoidal $\infty$-category.
	Thus, \cite[Proposition D.5.4]{Gaitsgory_1_affineness} shows that if $\cC$ is compactly generated (hence dualizable in $\PrL$), it is dualizable in $\PrL_S$ as well.
	The conclusion follows.
\end{proof}

A little extra effort allows to pass from $\PrLcg$ to $\PrLomega$:

\begin{cor}
	The functor
	\[ \Phi^\omega \colon \PrLomega_S \longrightarrow \PrLomega_T \times_{\PrLomega_V} \PrLomega_U \]
	is fully faithful.
\end{cor}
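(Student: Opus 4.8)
The plan is to deduce full faithfulness of $\Phi^\omega$ from the already-established full faithfulness of $\Phi$ on $\PrLcg$ (\cref{lem:Beauville_Laszlo_full_faitfhulness}) together with the compactness-detection lemma \cref{lem:compact_objects_finite_limit}. First I would note that $\PrLomega_S \hookrightarrow \PrLcg_S$ is a (non-full) subcategory inclusion which is nonetheless \emph{full on equivalences} and, more importantly, that a $\QCoh(S)$-linear functor $F \colon \cC \to \cD$ between compactly generated $\QCoh(S)$-linear categories lies in $\PrLomega_S$ precisely when it preserves compact objects. The restriction functors $\QCoh(S) \to \QCoh(T), \QCoh(U), \QCoh(V)$ all preserve compact objects (perfect complexes), so the target fiber product $\PrLomega_T \times_{\PrLomega_V} \PrLomega_U$ sits inside $\PrLcg_T \times_{\PrLcg_V} \PrLcg_U$ in the same way.

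The mapping spaces can then be compared directly. Given $\cC, \cD \in \PrLomega_S$, I would use that $\Phi$ on $\PrLcg$ is fully faithful to get
\[ \Map_{\PrLcg_S}(\cC, \cD) \xrightarrow{\ \sim\ } \Map_{\PrLcg_T \times_{\PrLcg_V} \PrLcg_U}\big(\Phi(\cC), \Phi(\cD)\big) . \]
Now $\Map_{\PrLomega_S}(\cC,\cD)$ is the union of those connected components of $\Map_{\PrLcg_S}(\cC,\cD)$ consisting of functors preserving compact objects, and likewise on the right-hand side. So it suffices to check that under the above equivalence a functor $F \colon \cC \to \cD$ preserves compact objects if and only if $\Phi(F)$ does, componentwise. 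One direction is immediate since $\Phi(F)$ is obtained from $F$ by base change along functors that preserve compactness. For the converse, suppose each component of $\Phi(F)$ preserves compact objects; I want to conclude the same for $F$. Here I would invoke that compact objects of $\cC$ are detected after applying the three base-change functors $i^*, f^*, j^* i^*$ — this is exactly the content of \cref{lem:compact_objects_finite_limit} applied to the finite diagram computing $\cC$ as a limit in $\PrL$ (using that $\cC \simeq \cC_U \times_{\cC_V} \cC_T$ by \cref{lem:Beauville_Laszlo_full_faitfhulness}) — so if $c \in \cC$ is compact, its images in $\cC_T, \cC_U, \cC_V$ are compact, hence $F(c)$ has compact images in $\cD_T, \cD_U, \cD_V$, hence $F(c)$ is compact in $\cD$ by another application of \cref{lem:compact_objects_finite_limit}. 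This shows $\Map_{\PrLomega_S}(\cC,\cD)$ maps by an equivalence onto $\Map_{\PrLomega_T \times_{\PrLomega_V} \PrLomega_U}(\Phi^\omega\cC, \Phi^\omega\cD)$.

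I expect the only genuinely delicate point to be the bookkeeping of which connected components one is selecting on each side and making sure the "detection of compactness" argument is symmetric: one must know both that $\cC$ itself is recovered as the relevant finite limit (so that \cref{lem:compact_objects_finite_limit} applies to it) and that the same holds for $\cD$, which is fine since both are dualizable in $\PrL_S$ and \cref{lem:Beauville_Laszlo_full_faitfhulness} applies to each. The rest is formal. Essential surjectivity of $\Phi^\omega$ is \emph{not} part of this corollary — it is handled separately via the Thomason-style argument of \cref{prop:Thomason_trick} — so this proof stops at full faithfulness.
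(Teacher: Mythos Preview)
Your proposal is correct and follows essentially the same route as the paper: reduce to the full faithfulness of $\Phi$ on $\PrLcg$, then show that a morphism $F \colon \cC \to \cD$ preserves compact objects as soon as $\Phi(F)$ does componentwise, by writing $\cD$ as the pullback $\cD_T \times_{\cD_V} \cD_U$ and invoking \cref{lem:compact_objects_finite_limit}. One small point of bookkeeping the paper makes explicit and you leave implicit: to apply \cref{lem:compact_objects_finite_limit} to $\cD$, the cospan $\cD_U \to \cD_V \leftarrow \cD_T$ must lie in $\PrLomega$, i.e.\ these transition maps must themselves preserve compact objects; this holds because they are obtained by applying $\cD \otimes_{\QCoh(S)} (-)$ to the maps $\QCoh(U) \to \QCoh(V)$ and $\QCoh(T) \to \QCoh(V)$, which preserve perfect complexes. (Also, your aside about ``compact objects of $\cC$ being detected'' is a slight slip of phrasing---detection is only needed in $\cD$, while for $\cC$ you just use that base change preserves compacts---but the actual chain of implications you write is correct.)
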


\begin{proof}
	The canonical inclusion $\PrLomega_S \to \PrLcg_S$ is faithful.
	Thus, \cref{lem:Beauville_Laszlo_full_faitfhulness} immediately implies that the map
	\[ \Phi^\omega \colon \PrLomega_S \longrightarrow \PrLomega_T \times_{\PrLomega_V} \PrLomega_U \]
	is faithful as well.
	In order to check fullness, it is enough to check that given a functor
	\[ F \colon \cC \longrightarrow \cD \]
	between compactly generated $\QCoh(S)$-linear $\infty$-categories, if $\Phi(F)$ preserves compact objects, then so does $F$.
	We know from the proof of \cref{lem:Beauville_Laszlo_full_faitfhulness} that the square
	\[ \begin{tikzcd}
		\cD \arrow{r} \arrow{d} & f^\ast(\cD) \arrow{d} \\
		i^\ast(\cD) \arrow{r} & g^\ast i^\ast(\cD)
	\end{tikzcd} \]
	is a pullback.
	Moreover, the map $i^\ast(\cD) \to g^\ast i^\ast(\cD)$ is obtained by applying $\cD \otimes_{\QCoh(S)} -$ to the map $g^\ast \QCoh(U) \to \QCoh(V)$.
	Since the latter preserves compact objects and $\cD$ is compactly generated, the functoriality of the tensor product guarantees that $i^\ast(\cD) \to g^\ast i^\ast(\cD)$ preserves compact objects.
	A similar reasoning shows that $f^\ast(\cD) \to g^\ast i^\ast(\cD)$ preserves compact objects as well.
	Thus, the conclusion follows from \cref{lem:compact_objects_finite_limit}.
\end{proof}

\subsection{Essential surjectivity}

We start with a very general statement:

\begin{lem}\label{lem:essential_surjectivity}
	Let $\underline{\cC} = (\cC_T, \cC_U, \cC_V, \alpha, \beta)$ be an element of $\PrL_T \times_{\PrL_V} \PrL_U$.
	Then:
	\begin{enumerate}\itemsep=0.2cm
		\item the natural transformation
		\[ i^\ast \Psi(\underline{\cC}) \longrightarrow \cC_U \]
		is an equivalence;
		
		\item assume that $\cC_T$ is compactly generated.
		Then the natural transformation
		\[ f^\ast \Psi(\underline{\cC}) \longrightarrow \cC_T \]
		is an equivalence.
	\end{enumerate}
\end{lem}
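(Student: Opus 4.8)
The plan is to reduce both statements to the behaviour of the functors $i^\ast$ and $f^\ast$ on pullback squares. Write $i^\ast = - \otimes_{\QCoh(S)} \QCoh(U)$ and $f^\ast = - \otimes_{\QCoh(S)} \QCoh(T)$. Since $\QCoh(S)$ is a rigid and compactly generated symmetric monoidal $\infty$-category, \cite[Proposition D.5.4]{Gaitsgory_1_affineness} shows that $\QCoh(U)$ and $\QCoh(T)$ are dualizable objects of $\PrL_S$; as tensoring with a dualizable object of a symmetric monoidal $\infty$-category is simultaneously a left and a right adjoint, the functors $i^\ast$ and $f^\ast$ preserve small limits. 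Applying them to the pullback square defining $\Psi(\underline{\cC})$ therefore yields again pullback squares, and it is enough to compute $i^\ast$ (resp.\ $f^\ast$) of the three outer objects $i_\ast\cC_U$, $i_\ast g_\ast\cC_V$ and $f_\ast\cC_T$.

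For part (1), the morphism $i$ is a quasi-compact open immersion, so the counit $i^\ast i_\ast \to \id$ is an equivalence and flat base change along $i$ gives $i^\ast f_\ast \simeq g_\ast j^\ast$. Hence the square obtained from $\Psi(\underline{\cC})$ becomes the pullback of $\cC_U \to g_\ast\cC_V \leftarrow g_\ast j^\ast\cC_T$; since the fibre-product datum provides an equivalence $j^\ast\cC_T \simeq \cC_V$, the right-hand leg becomes an equivalence $g_\ast\cC_V \xrightarrow{\ \sim\ } g_\ast\cC_V$, and a pullback along an equivalence collapses, yielding $i^\ast\Psi(\underline{\cC}) \simeq \cC_U$. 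One checks that the resulting equivalence is the canonical comparison map.

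For part (2) we lose flat base change, since the $I$-completion map $f$ is not flat, and this is where the real work lies. Two substitutes enter. First, the finite generation of $I$ guarantees that the pushforward $i_\ast$ along the quasi-compact open immersion $i$ still commutes with base change along $f$: the defect $i_\ast i^\ast$ fits into a triangle $\rR\Gamma_I \to \id \to i_\ast i^\ast$ of endofunctors of $\QCoh(S)$, and $\rR\Gamma_I$ is computed by the finite stable Koszul complex attached to a finite generating set of $I$, hence is compatible with $- \otimes_{\QCoh(S)} \QCoh(T)$; this gives $f^\ast i_\ast\cC_U \simeq j_\ast g^\ast\cC_U \simeq j_\ast\cC_V$ and $f^\ast i_\ast g_\ast\cC_V \simeq j_\ast g^\ast g_\ast\cC_V$, so that $f^\ast\Psi(\underline{\cC})$ is exhibited as a pullback whose remaining vertex is $f^\ast f_\ast\cC_T$, with structure map to $\cC_T$ the counit of $f^\ast \dashv f_\ast$. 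Second, one identifies this pullback with $\cC_T$ by applying the categorified Beauville--Laszlo equivalence over the base $T$: the relevant datum is $V = T \setminus V(IB) \hookrightarrow T \leftarrow \Spec(B^\wedge_I)$, which is admissible precisely because $B^\wedge_I \simeq A^\wedge_I$; since $\cC_T$ is compactly generated it is dualizable in $\PrL_T$, so \cref{lem:Beauville_Laszlo_full_faitfhulness} applied over $T$ recovers $\cC_T$ as a pullback of the same shape, and matching the two is a bookkeeping exercise tracking the base-change isomorphism for $i_\ast$, the counit $f^\ast f_\ast \to \id$, and Lurie's identification of $I$-complete $A$-modules with $IB$-complete $B$-modules \cite[\S7]{Lurie_SAG} — the categorical incarnation of the completeness hypothesis. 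I expect this last identification to be the main obstacle: because $f$ is not flat the comparison cannot be carried out formally, and the compact generation of $\cC_T$ is not cosmetic — it is exactly what makes $\cC_T$ dualizable over $\QCoh(T)$, hence lets \cref{lem:Beauville_Laszlo_full_faitfhulness} be applied over $T$, and the conclusion of (2) fails for a general $\QCoh(T)$-linear $\infty$-category, just as Grothendieck's existence theorem fails for arbitrary quasi-coherent sheaves.
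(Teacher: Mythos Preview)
Your argument for part~(1) is correct and coincides with the paper's: both use dualizability of $\QCoh(U)$ in $\PrL_S$ to commute $i^\ast$ past the defining pullback, then invoke $i^\ast i_\ast \simeq \id$ and the base change $i^\ast f_\ast \simeq g_\ast j^\ast$ to collapse the resulting square.

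For part~(2) there is a genuine gap. After applying $f^\ast$ and using base change along the open immersion, you correctly obtain the pullback square
\[ \begin{tikzcd}
f^\ast\Psi(\underline{\cC}) \arrow{r} \arrow{d} & f^\ast f_\ast(\cC_T) \arrow{d} \\
j_\ast j^\ast(\cC_T) \arrow{r} & f^\ast f_\ast j_\ast j^\ast(\cC_T) \ .
\end{tikzcd} \]
You then propose to identify this with $\cC_T$ by invoking \cref{lem:Beauville_Laszlo_full_faitfhulness} \emph{over $T$}, for the datum $V \hookrightarrow T \leftarrow \Spec(B^\wedge_I)$. But that lemma exhibits $\cC_T$ as the pullback of a \emph{different} cospan, whose upper-right vertex is $\cC_T \otimes_{\QCoh(T)} \QCoh(B^\wedge_I)$, whereas the square above has $f^\ast f_\ast \cC_T \simeq \cC_T \otimes_{\QCoh(T)} \QCoh(B \otimes_A B)$ there. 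Since $B \otimes_A B$ and $B^\wedge_I$ are in general inequivalent (e.g.\ $A = \Z[t]$, $B = \Z\llb t\rrb$), the two cospans do not match, and the ``bookkeeping'' you defer is in fact the entire content of the claim. Lurie's identification of $I$-complete $A$-modules with $IB$-complete $B$-modules does not bridge this: it says nothing about $B \otimes_A B$.

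The paper's route avoids this detour. Rather than setting up a new Beauville--Laszlo problem over $T$, it tensors the master square \eqref{eq:master_Beauville_Laszlo} first with $\QCoh(T)$ over $\QCoh(S)$ (using dualizability of $\QCoh(T)$ in $\PrL_S$), and then with $\cC_T$ over $\QCoh(T)$ (using dualizability of $\cC_T$ in $\PrL_T$ --- this is exactly where compact generation is used). The resulting pullback has $\cC_T$ in the upper-left corner, and its lower cospan is \emph{tautologically} the one displayed above: for instance $(\QCoh(T) \otimes_{\QCoh(S)} \QCoh(T)) \otimes_{\QCoh(T)} \cC_T \simeq f^\ast f_\ast \cC_T$ by definition. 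No auxiliary completion enters at all.
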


\begin{proof}
	We start with statement (1).
	Since $\QCoh(S)$ is rigid and compactly generated, we see that $\QCoh(U)$ Is a dualizable object in $\PrLcg_S$.
	This implies that the square
	\[ \begin{tikzcd}
		i^\ast(\cC) \arrow{r} \arrow{d} & i^\ast f_\ast(\cC_T) \arrow{d} \\
		i^\ast i_\ast(\cC_U) \arrow{r} & i^\ast i_\ast g_\ast(\cC_V)
	\end{tikzcd} \]
	is a pullback.
	Moreover, since $U \to S$ is a Zariski open immersion, the canonical map
	\[ \QCoh(U) \otimes_{\QCoh(S)} \QCoh(U) \longrightarrow \QCoh(U) \]
	is an equivalence.
	This implies that for every $\cD \in \PrL_U$, the unit transformation $\cD \to i^\ast i_\ast(\cD)$ is an equivalence.
	Besides, using the base change equivalence $i^\ast f_\ast \simeq g_\ast j^\ast$ and the given identification $\cC_V \simeq j^\ast(\cC_T)$, we can rewrite the left square as
	\[ \begin{tikzcd}
		i^\ast(\cC) \arrow{r} \arrow{d} & g_\ast j^\ast(\cC_T) \arrow{d} \\
		\cC_U \arrow{r} & g_\ast j^\ast(\cC_T) \ .
	\end{tikzcd} \]
	Since the right vertical map is an equivalence and the square is a pullback, the same goes for the left vertical map.
	
	\medskip
	
	We now prove statement (2).
	As above, we see that $\QCoh(T)$ is a dualizable object in $\PrLcg_S$.
	Thus, the square
	\[ \begin{tikzcd}
		f^\ast(\cC) \arrow{r} \arrow{d} & f^\ast f_\ast(\cC_T) \arrow{d} \\
		f^\ast i_\ast(\cC_U) \arrow{r} & f^\ast f_\ast j_\ast(\cC_V)
	\end{tikzcd}  \]
	is a pullback.
	Using the base change equivalence $f^\ast i_\ast \simeq j_\ast g^\ast$, and the given identification $\cC_V \simeq g^\ast(\cC_U)$, we can rewrite it as
	\begin{equation}\label{eq:Beauville_Laszlo_essentially_surjective}
		\begin{tikzcd}
			f^\ast(\cC) \arrow{r} \arrow{d} & f^\ast f_\ast(\cC_T) \arrow{d} \\
			j_\ast j^\ast(\cC_T) \arrow{r} & f^\ast f_\ast j_\ast j^\ast(\cC_T) \ .
		\end{tikzcd}
	\end{equation}
	On the other hand, since $\QCoh(T)$ is dualizable as an object in $\PrLcg_S$, we see that the square \eqref{eq:master_Beauville_Laszlo} remains a pullback after tensoring with $\QCoh(T)$.
	In other words, the diagram
	\[ \begin{tikzcd}
		\QCoh(T) \arrow{r} \arrow{d} & \QCoh(T) \otimes_{\QCoh(S)} \QCoh(T) \arrow{d} \\
		\QCoh(U) \otimes_{\QCoh(S)} \QCoh(T) \arrow{r} & \QCoh(V) \otimes_{\QCoh(S)} \QCoh(T)
	\end{tikzcd} \]
	is a pullback square.
	Now, $\QCoh(T)$ is a rigid and compactly generated symmetric monoidal $\infty$-category.
	Since $\cC_T$ is compactly generated, we deduce that it is also a dualizable object in $\PrL_T$.
	Thus, the square
	\[ \begin{tikzcd}
		\cC_T \arrow{r} \arrow{d} & \QCoh(T) \otimes_{\QCoh(S)} \QCoh(T) \otimes_{\QCoh(T)} \cC_T \arrow{d} \\
		\QCoh(U) \otimes_{\QCoh(S)} \QCoh(T) \otimes_{\QCoh(T)} \cC_T \arrow{r} & \QCoh(V) \otimes_{\QCoh(S)} \QCoh(T) \otimes_{\QCoh(T)} \cC_T
	\end{tikzcd} \]
	obtained from the previous one applying $- \otimes_{\QCoh(T)} \cC_T$, is a pullback.
	Observe furthermore that we have a canonical identification
	\[ \QCoh(U) \otimes_{\QCoh(S)} \QCoh(T) \simeq \QCoh(V) \ . \]
	At this point, a simple diagram chase identifies the lower cospan of the above square with the one of \eqref{eq:Beauville_Laszlo_essentially_surjective}.
	Since both squares are pullback, we deduce that the canonical map
	\[ f^\ast(\cC) \longrightarrow \cC_T \]
	is an equivalence, whence the conclusion.
\end{proof}

To go any further, we need to study more in depth the functor $\Psi$.
More precisely, we need to know that it takes compactly generated gluing data to compactly generated $\infty$-categories.
For this, we need a couple of preliminaries.

\begin{lem}\label{lem:semiorthogonal_decomp}
	Let $T = \Spec(R)$ be an affine derived scheme and let $I \subseteq \pi_0(R)$ be a finitely generated ideal.
	Let $K$ be the closed subset of $T$ cut by $I$ and let $U \coloneqq T \smallsetminus K$.
	Let $\cD \in \PrL_T$ and set
	\[ \cD_K \coloneqq \QCoh_K(T) \otimes_{\QCoh(T)} \cD \qquad \text{and} \qquad \cD_V \coloneqq \QCoh(V) \otimes_{\QCoh(T)} \cD \ . \]
	Then:
	\begin{enumerate}\itemsep=0.2cm
		\item The functor $\cD_K \to \cD$ is fully faithful and cocontinuous;
		
		\item the functor $\cD \to \cD_V$ has a fully faithful right adjoint;
		
		\item the pair $(\cD_K, \cD_V)$ is a semiorthogonal decomposition of $\cD$;
	\end{enumerate}
	Assume additionally that $\cD$ is compactly generated.
	Then:
	\begin{enumerate}\itemsep=0.2cm	\setcounter{enumi}{3}
		\item $\cD_K$ is compactly generated, and $\cD_K^\omega = \cD_K \cap \cD^\omega$ (equivalently, the inclusion $\cD_K \hookrightarrow \cD$ preserves compact objects);
		
		\item assume that $\{F_i\}_{i \in I}$ is a set of compact objects of $\cD$ that belong to $\cD_K$ and form a system of generators for $\cD_K$, and that $\{E_j\}_{j \in J}$ is a set of compact objects of $\cD$ whose restriction to $\cD_V$ provides a system of compact generators for $\cD_V$.
		Then $\{F_i, E_j\}_{i \in I, j \in J}$ is a system of compact generators for $\cD$.
	\end{enumerate}
\end{lem}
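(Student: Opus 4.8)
The plan is to obtain all five statements by base change, along the symmetric monoidal colimit-preserving functor $-\otimes_{\QCoh(T)}\cD$, from the ``recollement'' structure on $\QCoh(T)$ itself. Write $j\colon V\hookrightarrow T$ for the open immersion with $V=T\smallsetminus K$. The preliminary step is to record the following facts about $\QCoh(T)$, all of which rest on the theory of $I$-torsion sheaves for a finitely generated ideal (\cite[\S7]{Lurie_SAG}): the inclusion $\iota_0\colon\QCoh_K(T)\hookrightarrow\QCoh(T)$ is fully faithful and cocontinuous, and its right adjoint $\Gamma_K=-\otimes_{\cO_T}\Gamma_K(\cO_T)$, with $\Gamma_K(\cO_T)$ the stable Koszul complex on generators $f_1,\dots,f_n$ of $I$, is again cocontinuous and $\QCoh(T)$-linear; the restriction $j^{\ast}\colon\QCoh(T)\to\QCoh(V)$ has a fully faithful right adjoint $j_{\ast}$, which is also cocontinuous and $\QCoh(T)$-linear because $j$ is a quasi-affine open immersion of bounded cohomological dimension; one has a fibre sequence of cocontinuous $\QCoh(T)$-linear endofunctors $\iota_0\Gamma_K\to\id_{\QCoh(T)}\to j_{\ast}j^{\ast}$, obtained by acting with the fibre sequence of objects $\Gamma_K(\cO_T)\to\cO_T\to j_{\ast}\cO_V$; and $j^{\ast}\iota_0\simeq 0$.

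For parts (1)--(3) one transports this package along $-\otimes_{\QCoh(T)}\cD$. Since that functor carries adjunctions between cocontinuous functors to adjunctions and preserves equivalences, the adjunction $\iota_0\dashv\Gamma_K$ together with $\Gamma_K\iota_0\simeq\id$ becomes an adjunction $\iota\dashv\rho$, where $\iota\colon\cD_K\to\cD$ is the canonical functor and $\rho=\Gamma_K\otimes_{\QCoh(T)}\id_\cD$, satisfying $\rho\iota\simeq\id$; hence $\iota$ is fully faithful, and cocontinuous as a left adjoint in $\PrL$ --- this is (1). Likewise $j^{\ast}\dashv j_{\ast}$ with $j^{\ast}j_{\ast}\simeq\id$ yields $q\dashv s$ with $q=j^{\ast}\otimes\id_\cD\colon\cD\to\cD_V$ and $s=j_{\ast}\otimes\id_\cD$, and $qs\simeq\id$, so $s$ is fully faithful --- this is (2). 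Because the $\QCoh(T)$-action on $\cD$ preserves fibre sequences in the acting variable, base-changing the fibre sequence of endofunctors gives a functorial exact triangle $\iota\rho(d)\to d\to sq(d)$ in $\cD$ with $\iota\rho(d)\in\cD_K$ and $sq(d)$ in the essential image of $s$, while base-changing $j^{\ast}\iota_0\simeq 0$ gives $q\iota\simeq 0$; thus for $x\in\cD_K$ and $y\in\cD_V$ we get $\Map_\cD(s(y),\iota(x))\simeq\Map_{\cD_V}(y,q\iota(x))=0$, so $(\iota(\cD_K),s(\cD_V))$ is a semiorthogonal decomposition of $\cD$ --- this is (3).

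For (4), assume $\cD$ compactly generated by a set $\{d_j\}$ of compact objects and let $\kappa=\cO_T/\!\!/(f_1,\dots,f_n)$ be the Koszul complex, a perfect $K$-supported complex, hence dualizable in $\QCoh(T)$. The objects $\kappa\otimes_{\cO_T}d_j$ then lie in $\cD_K$ (since $q(\kappa\otimes d_j)\simeq j^{\ast}\kappa\otimes q(d_j)=0$) and are compact in $\cD$, because $\Map_\cD(\kappa\otimes d_j,-)\simeq\Map_\cD(d_j,\kappa^{\vee}\otimes-)$ is cocontinuous; and since $\iota$ is fully faithful and cocontinuous, colimits in $\cD_K$ are computed in $\cD$, so they are also compact in $\cD_K$. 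They generate $\cD_K$: if $x\in\cD_K$ annihilates all $\kappa\otimes d_j$ and their shifts, then $\kappa^{\vee}\otimes\iota(x)=0$; since $\Gamma_K(\cO_T)\simeq\colim_m\bigl(\cO_T/\!\!/(f_1^m,\dots,f_n^m)\bigr)^{\vee}$ and each $\cO_T/\!\!/(f_1^m,\dots,f_n^m)$ is a finite colimit of copies of $\kappa$, we deduce $\Gamma_K(\cO_T)\otimes\iota(x)=0$, and as $\iota(x)\in\cD_K$ satisfies $\Gamma_K(\cO_T)\otimes\iota(x)\simeq\iota(x)$ this forces $x=0$. Thus $\cD_K$ is compactly generated, with $\cD_K^{\omega}$ the idempotent completion of the finite colimits of the $\kappa\otimes d_j$; each of these being compact in $\cD$, we get $\cD_K^{\omega}\subseteq\cD_K\cap\cD^{\omega}$, and conversely any $x\in\cD_K$ with $\iota(x)\in\cD^{\omega}$ is compact in $\cD_K$ since colimits in $\cD_K$ are computed in $\cD$; so $\cD_K^{\omega}=\cD_K\cap\cD^{\omega}$.

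Part (5) is then formal. The $F_i$ and $E_j$ are compact in $\cD$ by hypothesis, so only generation is at issue: let $d\in\cD$ kill every $F_i$ and every $E_j$ and all their shifts. From $\iota\dashv\rho$ we get $\Map_{\cD_K}(F_i,\rho(d))\simeq\Map_\cD(F_i,d)=0$ for all $i$, so $\rho(d)=0$ because the $F_i$ generate $\cD_K$; the triangle of (3) then collapses to $d\simeq sq(d)$. From $q\dashv s$ we get $\Map_{\cD_V}(q(E_j),q(d))\simeq\Map_\cD(E_j,sq(d))\simeq\Map_\cD(E_j,d)=0$ for all $j$, so $q(d)=0$ because the $q(E_j)$ generate $\cD_V$, whence $d\simeq sq(d)=0$. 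The main obstacle lies in the preliminary step --- checking that $\Gamma_K$ and $j_{\ast}$ are colimit-preserving and $\QCoh(T)$-linear, so that the recollement genuinely lives in $\Mod_{\QCoh(T)}(\PrL)$ and survives base change --- together with the identification, in (4), of a usable set of compact generators of $\cD_K$ and the verification that they generate, where one must unwind $\Gamma_K(\cO_T)$ as a filtered colimit of Koszul complexes; the remainder is bookkeeping with adjunctions and exact triangles.
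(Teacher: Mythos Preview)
Your argument is correct in substance and follows essentially the same strategy as the paper: both proofs base-change the recollement $(\QCoh_K(T),\QCoh(V))$ of $\QCoh(T)$ along $-\otimes_{\QCoh(T)}\cD$. The paper organizes this slightly differently for (1)--(3), citing \cite[Propositions 7.1.5.3, 7.2.3.1 and Corollary 7.1.2.11]{Lurie_SAG} to identify $\cD_K\simeq\cD^{\mathrm{Nil}(I)}$ and $\cD_V\simeq\cD^{\mathrm{Loc}(I)}$ directly, and then arguing via the pushout square rather than by transporting the adjunctions by hand; for (4) the paper simply invokes \cite[Proposition 7.1.1.12(e)]{Lurie_SAG}, whereas you give an explicit self-contained argument with the Koszul complex. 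Your route is more concrete and avoids some black-box citations, at the cost of having to verify by hand that $\Gamma_K$ and $j_\ast$ are $\QCoh(T)$-linear and cocontinuous; the paper's route is terser but less transparent about what is really being used.

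One small slip to fix: in your verification of the semiorthogonality you write $\Map_\cD(s(y),\iota(x))\simeq\Map_{\cD_V}(y,q\iota(x))$, but this would require $s\dashv q$, whereas you have $q\dashv s$. The adjunction you actually need (and which gives the correct orthogonality compatible with the triangle $\iota\rho(d)\to d\to sq(d)$) is
\[ \Map_\cD(\iota(x),s(y))\simeq\Map_{\cD_V}(q\iota(x),y)\simeq\Map_{\cD_V}(0,y)=0, \]
so just swap the arguments. Also, in (4), your formula $\Gamma_K(\cO_T)\simeq\colim_m\bigl(\cO_T/\!\!/(f_1^m,\dots,f_n^m)\bigr)^\vee$ and the phrase ``finite colimit of copies of $\kappa$'' are slightly imprecise (one really wants that $\Gamma_K(\cO_T)$ is a filtered colimit of objects in the thick subcategory generated by $\kappa$); the argument goes through unchanged with this mild rewording, since $\{P:P\otimes\iota(x)=0\}$ is a thick subcategory.
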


\begin{proof}
	Combining \cite[Proposition 7.1.5.3 \& Corollary 7.1.2.11]{Lurie_SAG} we canonically identify the functor $\cD_K \to \cD$ with the inclusion $\cD^{\mathrm{Nil}(I)} \hookrightarrow \cD$.
	Moreover, \cite[Proposition 7.2.3.1]{Lurie_SAG} guarantees that $(\QCoh_K(T), \QCoh(V))$ is a semiorthogonal decomposition of $\QCoh(T)$.
	Therefore, the square
	\[ \begin{tikzcd}
		\QCoh_K(T) \arrow{r} \arrow{d} & \QCoh(T) \arrow{d} \\
		0 \arrow{r} & \QCoh(V) \ .
	\end{tikzcd} \]
	is both a pullback and a pushout.
	Applying $\cD \otimes_{\QCoh(T)} -$ the resulting square
	\[ \begin{tikzcd}
		\cD_K \arrow{r} \arrow{d} & \cD \arrow{d} \\
		0 \arrow{r} & \cD_V
	\end{tikzcd} \]
	is again a pushout.
	In particular, $\cD_V$ gets canonically identified with the localization of $\cD$ at $\cD_K$, and therefore the canonical functor $\cD \to \cD_V$ admits a fully faithful right adjoint.
	It immediately follows that $\cD_V$ is identified with $\cD^{\mathrm{Loc}(I)}$, and therefore that $(\cD_K, \cD_V)$ is a semiorthogonal decomposition of $\cD$.
	This proves statements (1), (2) and (3).
	
	\medskip
	
	Statement (4) follows from \cite[Proopsition 7.1.1.12-(e)]{Lurie_SAG}.
	Finally, (5) is a formal consequence of the other points: indeed, let $M \in \cD$ be an object and assume that for all $i \in I$ and $j \in J$, one has
	\[ \Map_\cD(F_i,M) \simeq \Map_{\cD}(E_j, M) \simeq 0 \ . \]
	Since the inclusion $\cD_K \hookrightarrow \cD$ commutes with colimits and the $F_i$ are compact generators for $\cD_K$, it follows from the first set of vanishing that $\Map_{\cD}(F,M) \simeq 0$ for every $F \in \cD_K$.
	Thus, $M \in \cD^{\mathrm{Loc}(I)}$ and at this point the second set of vanishing implies that $M \simeq 0$.
\end{proof}

\begin{prop} \label{prop:Thomason_trick}
	Let $\underline{\cC} = (\cC_T, \cC_U, \cC_V, \alpha, \beta)$ be an element of $\PrL_T \times_{\PrL_V} \PrL_U$.
	If $\cC_T$ and $\cC_U$ are compactly generated, then so is $\cC \coloneqq \cC_T \times_{\cC_V} \cC_U$.
\end{prop}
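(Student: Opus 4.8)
The plan is to exhibit an explicit set of compact generators for $\cC := \Psi(\underline{\cC}) \simeq \cC_T \times_{\cC_V} \cC_U$, which is a presentable $\QCoh(S)$-linear $\infty$-category since finite limits in $\PrL$ are computed in $\Cat_\infty$. First I would record, via \cref{lem:essential_surjectivity}, that $i^\ast\cC \simeq \cC_U$ and — using that $\cC_T$ is compactly generated — that $f^\ast\cC \simeq \cC_T$. Applying \cref{lem:semiorthogonal_decomp} to $\cD = \cC$ over $S$ produces a semiorthogonal decomposition $(\cC_K, i^\ast\cC) = (\cC_K, \cC_U)$ of $\cC$, where $\cC_K := \QCoh_K(S)\otimes_{\QCoh(S)}\cC$ and $K := V(I)$; parts (1)--(3) of that lemma apply to any object of $\PrL_S$, so $\cC_K\hookrightarrow\cC$ is fully faithful and cocontinuous and $\cC_U = \cC/\cC_K$.

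Next I would show that $\cC_K$ is compactly generated and that $\cC_K\hookrightarrow\cC$ preserves compact objects. For the latter: $\QCoh_K(S)$ is generated by the Koszul complexes on powers of a finite generating set of $I$, which are perfect on $S$, so $\QCoh_K(S)\hookrightarrow\QCoh(S)$ preserves compact objects, and this property survives the base change $-\otimes_{\QCoh(S)}\cC$. For the former I would invoke \cref{assumption:Beauville_Laszlo}: since the $\infty$-category of $I$-torsion sheaves depends only on the $I$-completion, $f^\ast$ induces a \emph{$\QCoh(S)$-linear} equivalence $\QCoh_K(S)\simeq\QCoh_{K_T}(T)$ with $K_T := V(IB)$, whence
\[ \cC_K \;\simeq\; \QCoh_{K_T}(T)\otimes_{\QCoh(T)}f^\ast\cC \;\simeq\; \QCoh_{K_T}(T)\otimes_{\QCoh(T)}\cC_T , \]
which is compactly generated by \cref{lem:semiorthogonal_decomp}(4) applied to the compactly generated category $\cC_T$. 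Note also that, since $V = T\smallsetminus K_T$, the same lemma equips $\cC_T$ with a semiorthogonal decomposition $\bigl(\QCoh_{K_T}(T)\otimes_{\QCoh(T)}\cC_T,\ \cC_V\bigr)$ under the identification $j^\ast\cC_T\simeq\cC_V$.

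Now I would construct the generators. The cospan $\cC_T\xrightarrow{j^\ast}\cC_V\xleftarrow{g^\ast}\cC_U$ lies in $\PrLomega$ ($\cC_T,\cC_U$ are compactly generated, $\cC_V$ is compactly generated as a localization of $\cC_T$, and both functors preserve compact objects), so by \cref{lem:compact_objects_finite_limit} an object of $\cC$ with compact components is compact. Pick a set $\{F_i\}$ of compact generators of $\cC_K$; viewed in $\cC$ they are compact with $i^\ast F_i = 0$. Pick a set $\{N^\beta\}$ of compact generators of $\cC_U$ and replace each by $\widetilde N^\beta := N^\beta\oplus N^\beta[1]$, still a compact generating set. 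Since $[g^\ast\widetilde N^\beta] = 0$ in the $K_0$ of the compact objects of $\cC_V$, the Thomason--Trobaugh/Neeman localization theorem applied to the Verdier quotient $\cC_T\to\cC_T/(\QCoh_{K_T}(T)\otimes_{\QCoh(T)}\cC_T)\simeq\cC_V$ yields $N_T^\beta\in\cC_T^\omega$ and an equivalence $j^\ast N_T^\beta\simeq g^\ast\widetilde N^\beta$, hence a compact object $c^\beta := (N_T^\beta,\widetilde N^\beta,\,\cdot\,)\in\cC$. Finally I would verify that $\{F_i\}\cup\{c^\beta\}$ generates $\cC$: if $X = (X_T,X_U,\psi)$ is right orthogonal to all of them, orthogonality to the $F_i$ forces $X_T$ into $(\QCoh_{K_T}(T)\otimes_{\QCoh(T)}\cC_T)^\perp = j_\ast(\cC_V)$, i.e.\ $X_T\simeq j_\ast j^\ast X_T$; using this, a short computation of mapping spaces in the fibre product identifies $\Map_{\cC}(c^\beta,X)$ with $\Map_{\cC_U}(\widetilde N^\beta,X_U)$, so orthogonality to the $c^\beta$ gives $X_U = 0$, and then $j^\ast X_T\simeq g^\ast X_U = 0$ forces $X_T\simeq j_\ast j^\ast X_T = 0$. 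As $\cC$ is presentable, a set of compact objects with vanishing right orthogonal generates it, so $\cC\in\PrLcg_S$.

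The main obstacle is the production of the $N_T^\beta$, that is, lifting compact generators of $\cC_U$ to compact objects of $\cC$: this is precisely where formal fibre-product manipulations run out, and one must import the Thomason--Trobaugh/Neeman ``up to summand, up to shift'' trick (the vanishing of the $K_0$-obstruction for $\widetilde N^\beta$), here applied to the localization sequence $\QCoh_{K_T}(T)\otimes_{\QCoh(T)}\cC_T\to\cC_T\to\cC_V$ rather than to the open/closed decomposition of a scheme — this is the ``mild variation of \cite[Theorem 0.2]{Toen_Azumaya}'' alluded to earlier. A secondary point to handle with care is that the equivalence $\QCoh_K(S)\simeq\QCoh_{K_T}(T)$ must be produced $\QCoh(S)$-linearly, so that it is compatible with the relevant relative tensor products.
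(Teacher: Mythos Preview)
Your proposal is correct and follows essentially the same strategy as the paper: build compact generators of $\cC$ out of (i) compact generators of the $K$-torsion part (which pair with $0$ on the $U$-side) and (ii) Thomason--Neeman lifts of compact generators of $\cC_U$ through the localization $\cC_T \to \cC_V$.

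The one noteworthy difference is organizational. You route through \cref{lem:essential_surjectivity} and \cref{assumption:Beauville_Laszlo} in order to set up the semiorthogonal decomposition \emph{on $\cC$ itself} over $S$ and then identify $\cC_K \simeq \QCoh_{K_T}(T)\otimes_{\QCoh(T)}\cC_T$. The paper bypasses this entirely: it works directly with the semiorthogonal decomposition of $\cC_T$ over $T$, picks compact generators $F_i$ of $(\cC_T)_K$, and simply forms $\widetilde F_i = (F_i,0) \in \cC$. Neither \cref{lem:essential_surjectivity} nor the completion hypothesis is invoked in the paper's proof of this proposition, so the paper's argument is strictly more self-contained (and in fact shows the statement holds without \cref{assumption:Beauville_Laszlo}). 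Conversely, your final verification that $\{F_i\}\cup\{c^\beta\}$ has trivial right orthogonal is spelled out more carefully than the paper's, which just asserts that a family whose images in $\cC_T$ and $\cC_U$ are generating must generate $\cC$; your mapping-space computation using $X_T \simeq j_\ast j^\ast X_T$ is exactly what makes that assertion rigorous.
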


\begin{proof}
	Consider the commutative triangles
	\[ \begin{tikzcd}
		\cC_T \arrow{r} \arrow{dr}[swap]{\alpha} & \cC_T \otimes_{\QCoh(T)} \QCoh(V) \arrow{d} \\
		{} & \cC_V
	\end{tikzcd} \qquad \text{and} \qquad \begin{tikzcd}
		\cC_U \arrow{r} \arrow{dr}[swap]{\beta} & \cC_U \otimes_{\QCoh(U)} \QCoh(V) \arrow{d} \\
		{} & \cC_V
	\end{tikzcd} \]
	By definition, the vertical maps are equivalences, hence in particular they preserve compact objects.
	On the other hand, the functors
	\[ \QCoh(T) \longrightarrow \QCoh(V) \qquad \text{and} \qquad \QCoh(U) \longrightarrow \QCoh(V) \]
	preserve compact objects.
	Since $\cC_T$ and $\cC_U$ are compactly generated, it follows that the horizontal maps in the above two triangles also preserve compact objects.
	Thus, $\alpha$ and $\beta$ are automatically morphisms in $\PrLomega$.
	
	\medskip
	
	Let $K$ be the closed complementary of $V$ inside $T$.
	Then \cref{lem:semiorthogonal_decomp} guarantees that $(\cC_T)_K \coloneqq \QCoh_K(T) \otimes_{\QCoh(T)} \cD$ is a compactly generated full subcategory of $\cC_T$ and that the inclusion $(\cC_T)_K \subseteq \cC_T$ preserves compact objects.
	Choose a system of compact generators $\{F_i\}_{i \in I}$ for $(\cC_T)_K$.
	Their image in $\cC_V$ is zero, so every $F_i$ automatically defines an object $\widetilde{F}_i = (F_i, 0)$ in $\cC$, which is compact in virtue of \cref{lem:compact_objects_finite_limit}.
	
	\medskip
	
	Choose now a system of compact generators $\{G_j\}_{j \in J}$ for $\cC_U$.
	Set $E_j \coloneqq G_j \oplus G_j[1]$.
	Then $\{E_j\}_{j \in J}$ is again a system of compact generators of $\cC_U$.
	It follows that $\{g^\ast(E_j)\}_{j \in J}$ is a system of compact generators for $\cC_V$, and since
	\[ \begin{tikzcd}
		(\cC_T)_K \arrow{r} \arrow{d} & \cC_T \arrow{d} \\
		0 \arrow{r} & \cC_V
	\end{tikzcd} \]
	is both a pullback and a pushout square, Thomason's trick (\cite[Corollary 3.2.3]{Bondal_VdB}) implies that each $g^\ast(E_j) \simeq g^\ast(G_j) \oplus g^\ast(G_j)[1]$ is of the form $j^\ast(E_j')$ for some $E_j' \in \cC_T^\omega$.
	Therefore, the pair $(E_j', E_j)$ defines an object $\widetilde{E}_j \in \cC$, which is once again a compact object thanks to \cref{lem:compact_objects_finite_limit}.
	
	\medskip
	
	Consider now the family $\{(\widetilde{F}_i, \widetilde{E}_j)\}_{i \in I, j \in J}$.
	This is a system of compact objects of $\cC$.
	Its restriction to $\cC_U$ coincides with the family of compact generators $\{E_j\}_{j \in J}$, while its restriction to $\cC_T$ coincides with the system $\{F_i, E'_j\}_{i \in I, j \in J}$.
	Applying \cref{lem:semiorthogonal_decomp}-(5), we see that this is a system of compact generators of $\cC_T$, and therefore that $\{(\widetilde{F}_i, \widetilde{E}_j)\}_{i \in I, j \in J}$ is a system of compact generators for $\cC$.
	Thus, $\cC$ is compactly generated itself.
\end{proof}

\begin{cor}\label{cor:pullback_in_PrLomega}
	Let $\underline{\cC} = (\cC_T, \cC_U, \cC_V,\alpha, \beta)$ be an element of $\PrLcg_T \times_{\PrLcg_V} \PrLcg_U$.
	Then the fiber product
	\[ \begin{tikzcd}
		\cC \arrow{r} \arrow{d} & \cC_T \arrow{d}{\alpha} \\
		\cC_U \arrow{r}{\beta} & \cC_V
	\end{tikzcd} \]
	computed in $\PrL_S$, is equally a pullback in $\PrLcg_S$ and in $\PrLomega_S$.
\end{cor}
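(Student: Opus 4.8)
The plan is to read the statement off formally from \cref{prop:Thomason_trick}, \cref{lem:compact_objects_finite_limit}, and the full faithfulness of $\PrLcg_S \hookrightarrow \PrL_S$, with one extra bookkeeping argument to cope with the fact that $\PrLomega_S \hookrightarrow \PrLcg_S$ is wide but not full. First I would note that, since the forgetful functor $\PrL_S \to \PrL$ preserves limits and does not change underlying $\infty$-categories, the square in the statement is a pullback in $\PrL$ and $\cC$ is simply the fibre product $\cC_T \times_{\cC_V} \cC_U$ of $\infty$-categories endowed with its induced $\QCoh(S)$-linear structure. By \cref{prop:Thomason_trick} the $\infty$-category $\cC$ is compactly generated, hence lies in $\PrLcg_S$ --- an object of $\PrLcg_S$ being precisely a $\QCoh(S)$-module whose underlying $\infty$-category is compactly generated --- and for the same reason the vertices $\cC_T$, $\cC_U$, $\cC_V$ belong to $\PrLcg_S$ too. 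The proof of \cref{prop:Thomason_trick} also records that $\alpha$ and $\beta$ preserve compact objects and produces compact generators $\{(\widetilde{F}_i, \widetilde{E}_j)\}$ of $\cC$ all of whose components are compact; since the projections $\cC \to \cC_T$ and $\cC \to \cC_U$ are exact, they carry every retract of a finite colimit of these generators --- that is, every compact object of $\cC$ --- to a compact object, so they too are morphisms of $\PrLomega_S$. Thus the whole square lives in $\PrLomega_S$, and a fortiori in $\PrLcg_S$.

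Granting the above, the $\PrLcg_S$-case is immediate: full faithfulness of $\PrLcg_S \hookrightarrow \PrL_S$ forces a limit cone of $\PrL_S$ whose vertex and whose diagram take values in $\PrLcg_S$ to be a limit cone in $\PrLcg_S$. For the $\PrLomega_S$-case I would argue with mapping spaces. Fixing $\cE \in \PrLomega_S$, the space $\Map_{\PrLomega_S}(\cE, \cD)$ is the union of those connected components of $\Map_{\PrLcg_S}(\cE, \cD)$ spanned by the compact-object-preserving functors; and a functor $F \colon \cE \to \cC$ in $\PrLcg_S$ preserves compact objects if and only if its composites with the projections $\cC \to \cC_T$ and $\cC \to \cC_U$ do --- ``only if'' because the projections preserve compact objects, ``if'' by \cref{lem:compact_objects_finite_limit} applied to the images of the compact objects of $\cE$. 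Using this, together with the fact that $\alpha$ and $\beta$ preserve compact objects, one checks that the equivalence $\Map_{\PrLcg_S}(\cE, \cC) \simeq \Map_{\PrLcg_S}(\cE, \cC_T) \times_{\Map_{\PrLcg_S}(\cE, \cC_V)} \Map_{\PrLcg_S}(\cE, \cC_U)$ of the previous step restricts to $\Map_{\PrLomega_S}(\cE, \cC) \simeq \Map_{\PrLomega_S}(\cE, \cC_T) \times_{\Map_{\PrLomega_S}(\cE, \cC_V)} \Map_{\PrLomega_S}(\cE, \cC_U)$, which says exactly that $\cC$ is a pullback in $\PrLomega_S$.

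The only point that needs genuine input beyond the already proven \cref{prop:Thomason_trick} and \cref{lem:compact_objects_finite_limit} --- hence the part I expect to require the most care --- is the verification that the projections $\cC \to \cC_T$ and $\cC \to \cC_U$ preserve compact objects; above this is handled via the explicit compact generators of \cref{prop:Thomason_trick}, but alternatively, under the equivalences of \cref{lem:essential_surjectivity}, these projections are identified with the base-change maps $\cC \to \cC \otimes_{\QCoh(S)} \QCoh(T)$ and $\cC \to \cC \otimes_{\QCoh(S)} \QCoh(U)$, which preserve compactness since $\cO_T \in \Perf(T)$ and $\cO_U \in \Perf(U)$ are compact. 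Everything else is a formal manipulation of the chain of subcategories $\PrLomega_S \subseteq \PrLcg_S \subseteq \PrL_S$.
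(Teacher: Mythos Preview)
Your proof is correct and follows essentially the same approach as the paper. The paper's argument is slightly terser: for the $\PrLcg_S$-case it invokes full faithfulness exactly as you do, and for the $\PrLomega_S$-case it opts for your second alternative --- using \cref{lem:essential_surjectivity} to identify the projections with the base-change functors along $\QCoh(S) \to \QCoh(T)$ and $\QCoh(S) \to \QCoh(U)$ --- and then concludes in one sentence that the pullback property transfers from $\PrLcg_S$ to $\PrLomega_S$ via \cref{lem:compact_objects_finite_limit}, leaving implicit the mapping-space verification you spell out.
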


\begin{proof}
	Since $\PrLcg_S$ is fully faithful inside $\PrL_S$, the statement in this case is directly a consequence of \cref{prop:Thomason_trick}.
	For $\PrLomega_S$, observe that in this case \cref{lem:essential_surjectivity} implies that the canonical maps
	\[ \cC \otimes_{\QCoh(S)} \QCoh(U) \longrightarrow \cC_U \qquad \text{and} \qquad \cC \otimes_{\QCoh(S)} \QCoh(T) \longrightarrow \cC_T \]
	are equivalences.
	Thus, we can identify the map $\cC \to \cC_U$ (resp.\ $\cC \to \cC_T$) with the map obtained applying $\cC \otimes_{\QCoh(S)} -$ to $\QCoh(S) \to \QCoh(U)$ (resp.\ $\QCoh(S) \to \QCoh(T)$).
	As the latter commute with compact objects, the same goes for the original morphisms.
	Therefore the square in consideration is a square in $\PrLomega_S$.
	Since its image in $\PrLcg_S$ is a pullback, the conclusion immediately follows from \cref{lem:compact_objects_finite_limit}.
\end{proof}

We are now ready to prove \cref{thm:Beauville_Laszlo_omega}:

\begin{proof}[Proof of \cref{thm:Beauville_Laszlo_omega}]
	\Cref{cor:pullback_in_PrLomega} shows that the adjunction
	\[ \Phi \colon \PrL_S \leftrightarrows \PrL_T \times_{\PrL_V} \PrL_U \colon \Psi \]
	restricts to an adjunction
	\[ \Phi^\omega \colon \PrLomega_S \leftrightarrows \PrLomega_T \times_{\PrLomega_V} \PrLomega_U \colon \Psi^\omega \ . \]
	The unit of this adjunction is an equivalence thanks to \cref{lem:Beauville_Laszlo_full_faitfhulness}.
	On the other hand, \cref{lem:essential_surjectivity} guarantees that the counit is an equivalence as well.
	Thus $\Phi^\omega$ is an equivalence.
\end{proof}

\section{The formal GAGA problem for the derived Brauer group}

Let $S = \Spec(R)$ be the spectrum of an $\mathbb E_\infty$-ring which is complete with respect a finitely generated ideal $I \subseteq \pi_0(R)$.
Choose a tower $\{R_n\}_{n \geqslant 0}$ as in \cite[Lemma 8.1.2.2]{Lurie_SAG} computing the formal completion of $S$ along $I$ (when $R$ is discrete, we can simply take $R_n \coloneqq R / I^{n+1})$ 
Let $X$ be a derived stack over $\Spec(R)$ and let $X_n \coloneqq X \times_{\Spec(R)} \Spec(R_n)$.
The \emph{formal completion of $X$ along the base} is the formal scheme
\[ \mathfrak X \coloneqq \colim X_n . \]
We assume that $X$ is \emph{categorically proper} \cite{Halpern-Leistner_Preygel}, i.e.\ that the canonical symmetric monoidal functor
\[ \Perf(X) \longrightarrow \lim_n \Perf(X_n) \]
is an equivalence. This is satisfied for instance if $X$ is a proper derived algebraic space \cite[Theorem 8.5.0.3]{Lurie_SAG}.

\medskip

To this situation we can attach a number of cohomological invariants.
In first place, we have three different declinations of the derived Brauer group:
\[ \dBr(X), \qquad \dBr(\mathfrak X) , \qquad \lim_n \dBr(X_n) . \]
Similarly, we have three different declinations of the cohomological Brauer group:
\[ \rH^2_{\mathrm{\'et}}(X; \bbG_m) , \qquad \rH^2_{\mathrm{\'et}}(\mathfrak X; \bbG_m), \qquad \lim_n \rH^2_{\mathrm{\'et}}(X_n; \bbG_m) . \]
The first and the last one need no further explanation.
The middle one is defined as:
\[ \rH^2_{\mathrm{\'et}}(\mathfrak X; \bbG_m) \coloneqq \pi_0( \dAz_0(\mathfrak X) ) \coloneqq \pi_0 ( \Map(\mathfrak X, \rK(\bbG_m,2))) . \]

\begin{rem}
	It is possible to reformulate $\rH^2_{\mathrm{\'et}}(\mathfrak X; \bbG_m)$ in more classical terms as follows.
	First, we remark that for every scheme $Y$ there is a canonical quasi-isomorphism (in \emph{cohomological} notation):
	\begin{equation}\label{eq:cohomology_with_coefficients_in_Eilenberg_Maclane}
		\Map(Y, \rK(\bbG_m,2)) \simeq \tau^{\le 2} \rR \Gamma\et(Y; \bbG_m)[2] .
	\end{equation}
	This quasi-isomorphism arises as follows: let $\Sh_{\DAb}(Y_{\mathrm{\'et}}, \tauet)$ (resp.\ $\Sh_{\mathcal D^{\le 0}(\Ab)}(Y_{\mathrm{\'et}};\tauet)$) be the $\infty$-category of \'etale sheaves on $Y$ with values in the unbounded (resp.\ connective) derived $\infty$-category of abelian groups.
	Let $\pi \colon (Y\et,\tauet) \to *$ be the canonical functor.
	We obtain the following commutative diagram:
	\[ \begin{tikzcd}
		\Sh_{\cD^{\le 0}(\Ab)}(Y_{\mathrm{\'et}};\tauet) & \Sh_{\DAb}(Y_{\mathrm{\'et}}, \tauet) \arrow{l}[swap]{\tau^{\le 0}} \\
		\cD^{\le 0}(\Ab) \arrow{u}{{}^{\le 0} \pi^*} & \DAb . \arrow{l}[swap]{\tau^{\le 0}} \arrow{u}{\pi^*}
	\end{tikzcd} \]
	One easily verifies that:
	\[ \Map(Y, \rK(\bbG_m,2)) \simeq {}^{\le 0} \pi_*(\bbG_m[2]) . \]
	At this point, the quasi-isomorphism \eqref{eq:cohomology_with_coefficients_in_Eilenberg_Maclane} is simply induced by the Beck-Chevalley transformation associated to the above diagram.
	Having this identification at one's disposal, it is easy to provide a canonical identification
	\[ \rH^2_{\mathrm{\'et}}(\mathfrak X; \bbG_m) \simeq \rH^2\left( \lim_n \tau_{\le 2} \rR \Gamma\et(X_n; \bbG_m) \right) . \]
\end{rem}

\vspace{0.2cm}

The relationship between $\dBr(\fX)$ and $\displaystyle \lim_n \dBr(X_n)$ is easy to understand in general:

\begin{prop} \label{prop:Milnor_sequence}
	There are short exact sequences
	\[ 0 \longrightarrow {\lim_n}^1 \: \Pic(X_n) \longrightarrow \dBr(\mathfrak X) \longrightarrow \lim_n \dBr(X_n) \longrightarrow 0 \]
	and
	\[ 0 \longrightarrow {\lim_n}^1 \: \Pic(X_n) \longrightarrow \rH_{\mathrm{\'et}}^2( \mathfrak X; \bbG_m ) \longrightarrow \lim_n \rH^2_{\mathrm{\'et}}(X_n; \bbG_m) \longrightarrow 0 . \]
	On the other hand, the natural map
	\[ \rH^1_{\mathrm{\'et}}(\mathfrak X; \Z) \longrightarrow \lim_n \rH^1_{\mathrm{\'et}}(X_n; \Z) \]
	is an isomorphism.
\end{prop}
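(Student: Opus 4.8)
The plan is to reduce the whole statement to the classical Milnor $\lim^1$-sequence; the proof is purely formal and uses neither properness nor categorical properness of $X$, only the tautological identity $\fX = \colim_n X_n$.

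The first step is to rewrite each of the three groups in the statement as $\pi_0$ of a sequential homotopy limit. Since $\dAz \simeq \rK(\bbG_m,2)\times\rK(\Z,1)$ is an infinite-loop object of $\dSt$ (\cref{thm:Toen_dAz}) and $\fX = \colim_n X_n$ in $\dSt$, the mapping-space functors $\Map_{\dSt}(-,\dAz)$, $\dAz_0(-)=\Map_{\dSt}(-,\rK(\bbG_m,2))$ and $\Map_{\dSt}(-,\rK(\Z,1))$ carry this colimit to limits, yielding equivalences of pointed infinite-loop spaces $\dAz(\fX)\simeq\lim_n\dAz(X_n)$, $\dAz_0(\fX)\simeq\lim_n\dAz_0(X_n)$ and $\Map(\fX,\rK(\Z,1))\simeq\lim_n\Map(X_n,\rK(\Z,1))$, all pointed by the trivial object. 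This is the only geometric input; everything below is bookkeeping.

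The second step is to feed these towers into the Milnor exact sequence: for a tower $\cdots\to Y_1\to Y_0$ of pointed infinite-loop spaces with homotopy limit $Y$ there is a natural short exact sequence ${0\to{\lim_n}^1\pi_{q+1}(Y_n)\to\pi_q(Y)\to\lim_n\pi_q(Y_n)\to 0}$ for every $q\ge 0$ (its validity at $q=0$ is exactly where the infinite-loop structure is used). Applying this with $q=0$ and identifying homotopy groups via $\dAz\simeq\rK(\bbG_m,2)\times\rK(\Z,1)$: for the tower $\dAz(X_n)$ one has $\pi_0=\dBr(X_n)$ — equivalently $\dBr^{\mathrm{cat}}(X_n)$, by Toën's global generation \cref{thm:Toen_global_generation} — and $\pi_1=\mathsf{dPic}(X_n)\simeq\Pic(X_n)\oplus\rH^0_{\mathrm{\'et}}(X_n;\Z)$; for $\dAz_0(X_n)$ one has $\pi_0=\rH^2_{\mathrm{\'et}}(X_n;\bbG_m)$ and $\pi_1=\rH^1_{\mathrm{\'et}}(X_n;\bbG_m)=\Pic(X_n)$; for $\Map(X_n,\rK(\Z,1))$ one has $\pi_0=\rH^1_{\mathrm{\'et}}(X_n;\Z)$ and $\pi_1=\rH^0_{\mathrm{\'et}}(X_n;\Z)$; and the analogous identifications hold for $\fX$ by the definitions $\dBr(\fX):=\pi_0\dAz(\fX)$, $\rH^2_{\mathrm{\'et}}(\fX;\bbG_m):=\pi_0\dAz_0(\fX)$, $\rH^1_{\mathrm{\'et}}(\fX;\Z):=\pi_0\Map(\fX,\rK(\Z,1))$. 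Since ${\lim_n}^1$ commutes with the finite direct sum, the first tower gives an exact sequence with left term ${\lim_n}^1\Pic(X_n)\oplus{\lim_n}^1\rH^0_{\mathrm{\'et}}(X_n;\Z)$, the second tower gives the stated sequence for $\rH^2_{\mathrm{\'et}}(-;\bbG_m)$, and the third gives an exact sequence with left term ${\lim_n}^1\rH^0_{\mathrm{\'et}}(X_n;\Z)$.

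It then remains only to prove ${\lim_n}^1\rH^0_{\mathrm{\'et}}(X_n;\Z)=0$, which simultaneously removes the spurious summand from the first sequence and collapses the third to the asserted isomorphism. For this I would note that each transition $X_n\hookrightarrow X_{n+1}$ is a nilpotent closed immersion inducing an isomorphism on reductions (the $R_n$ share the same reduced quotient), hence an equivalence of small étale topoi; consequently $\rH^0_{\mathrm{\'et}}(X_{n+1};\Z)\to\rH^0_{\mathrm{\'et}}(X_n;\Z)$ is an isomorphism, the tower is essentially constant — a fortiori Mittag--Leffler — and its $\lim^1$ vanishes. I do not expect a genuine obstacle: the only points needing a moment's care are the legitimacy of the Milnor sequence at $\pi_0$ (handled by the infinite-loop structure arising from $\rK(\bbG_m,2)$ and $\rK(\Z,1)$) and the identification $\pi_0\dAz(X_n)\simeq\dBr(X_n)$ via \cref{thm:Toen_global_generation}; the rest is routine.
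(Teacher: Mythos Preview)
Your proof is correct and follows essentially the same approach as the paper: apply the Milnor $\lim^1$-sequence to the tower $\dAz(X_n)$ (and its factors $\rK(\bbG_m,2)$, $\rK(\Z,1)$), identify $\pi_1$ via \cref{thm:Toen_dAz}, and kill the $\rH^0_{\mathrm{\'et}}(X_n;\Z)$ contribution using that the transition maps are isomorphisms. You are in fact slightly more careful than the paper on two points it leaves implicit: the need for an infinite-loop structure to get the Milnor sequence at $\pi_0$, and the invocation of \cref{thm:Toen_global_generation} to pass from $\dBr^{\mathrm{cat}}$ to $\dBr$.
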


\begin{proof}
	Applying Milnor's short exact sequence to the inverse limit
	\[ \dAz(\mathfrak X) \simeq \lim_n \dAz(X_n) , \]
	we obtain
	\[ 0 \longrightarrow {\lim_n}^1 \: \pi_1( \dAz(X_n) ) \longrightarrow \dBr(\mathfrak X) \longrightarrow \lim_n \dBr(X_n) \longrightarrow 0 . \]
	Using To\"en's theorem \cref{thm:Toen_dAz}, we see that
	\[ \pi_1( \dAz(X_n) ) \simeq \Pic(X_n) \times \rH^0_{\mathrm{\'et}}(X_n;\mathbb Z) . \]
	Observe that
	\[ {\lim_n}^1 \: ( \Pic(X_n) \times \rH^0_{\mathrm{\'et}}(X_n;\mathbb Z) ) \simeq {\lim_n}^1 \: \Pic(X_n) \times {\lim_n}^1 \: \rH^0_{\mathrm{\'et}}(X_n;\mathbb Z) , \]
	and since the transition maps $\rH^0_{\mathrm{\'et}}(X_{n+1};\mathbb Z) \to \rH^0_{\mathrm{\'et}}(X_n;\mathbb Z) )$ are isomorphisms, we deduce that
	\[ {\lim_n}^1 \: \pi_1(\dAz(X_n)) \simeq {\lim_n}^1 \Pic(X_n) . \]
	The conclusion follows.
	The other statements result from the same reasoning.
\end{proof}

In \cite[Theorem 7.2]{Geisser_Morin_Kernel_Brauer_Manin} it is shown that the canonical map
\[ \rH^2\et(X;\bbG_m) \longrightarrow \rH^2\et(\fX;\bbG_m) \]
is injective when $X$ is regular, $S = \Spec(\Z_p)$ and the map $X \to S$ is flat.
Using derived Azumaya algebras, we can remove most of these assumptions, as we are going to see.

\subsection{Full faithfulness of categorical formal GAGA}

We keep the same notations and assumptions introduced at the beginning of this section.
We have:

\begin{thm} \label{thm:GAGA_Morita}
	The symmetric monoidal functor
	\[ v_\bullet^* \colon \NcSmPr(X) \longrightarrow \lim_{n \in \bbN} \NcSmPr(X_n) . \]
	is fully faithful.
	In other words, if $\cC$ is a smooth and proper category over $X$ and $\cC_n \coloneqq \cC \otimes_{\QCoh(X)} \QCoh(X_n)$, the natural map
	\[ \cC \longrightarrow \lim_n \cC_n \]
	is an equivalence, where the limit is computed in $\PrLomega_X$.
\end{thm}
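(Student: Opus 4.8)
The plan is to establish the reformulation stated in the theorem: for every smooth and proper $\cC$ over $X$, the canonical map $\cC \to \lim_n \cC_n$ is an equivalence in $\PrLomega_X$; full faithfulness of $v_\bullet^*$ then follows formally. Indeed, writing $(v_n)_*$ for restriction of scalars along $\QCoh(X) \to \QCoh(X_n)$, the adjunctions $v_n^* \dashv (v_n)_*$ assemble to an adjunction between $v_\bullet^*$ and $(v_\bullet)_* = \lim_n (v_n)_*(-)$, whence for smooth proper $\cC, \cD$ one has $\Map_{\lim_n \NcSmPr(X_n)}(v_\bullet^* \cC, v_\bullet^* \cD) \simeq \Map_{\PrLomega_X}(\cC, (v_\bullet)_* v_\bullet^* \cD) = \Map_{\PrLomega_X}(\cC, \lim_n \cD_n) \simeq \Map_{\PrLomega_X}(\cC,\cD) = \Map_{\NcSmPr(X)}(\cC,\cD)$, the last step being the reformulation applied to $\cD$ (and $\NcSmPr \hookrightarrow \PrLomega$ being full on both sides).

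First I would pass to compact objects. The functor $\Ind(-)$, a symmetric monoidal equivalence from the $\infty$-category $\Catinf^{\mathrm{perf}}$ of small idempotent-complete stable categories onto the full subcategory of $\PrLomega$ spanned by stable categories, with inverse $(-)^\omega$, induces — after taking modules over the corresponding commutative algebras $\Perf(X)$ and $\QCoh(X) \simeq \Ind(\Perf(X))$ — a symmetric monoidal equivalence $\Mod_{\Perf(X)}(\Catinf^{\mathrm{perf}}) \simeq \PrLomega_X$, again given by $(-)^\omega$. Since $(-)^\omega$ is monoidal, $\cC_n^\omega \simeq \cC^\omega \otimes_{\Perf(X)} \QCoh(X_n)^\omega = \cC^\omega \otimes_{\Perf(X)} \Perf(X_n)$; and, it being a symmetric monoidal equivalence, the smoothness-and-properness of $\cC$ — i.e.\ dualizability in $\PrLomega_X$ — is equivalent to dualizability of $\cC^\omega$ over $\Perf(X)$. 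It therefore suffices to prove that
\[ \cC^\omega \longrightarrow \lim_n \bigl( \cC^\omega \otimes_{\Perf(X)} \Perf(X_n) \bigr) \]
is an equivalence in $\Mod_{\Perf(X)}(\Catinf^{\mathrm{perf}})$.

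The key point is that, $\cC^\omega$ being dualizable over $\Perf(X)$, the endofunctor $\cC^\omega \otimes_{\Perf(X)} (-)$ is not merely a left adjoint (to $(\cC^\omega)^\vee \otimes_{\Perf(X)} (-)$) but simultaneously a right adjoint, hence commutes with all limits. So the target of the displayed map is $\cC^\omega \otimes_{\Perf(X)} \bigl( \lim_n \Perf(X_n) \bigr)$, and the map itself is obtained by applying $\cC^\omega \otimes_{\Perf(X)} (-)$ to the comparison functor $\Perf(X) \to \lim_n \Perf(X_n)$. By the standing categorical-properness hypothesis this functor is an equivalence — and it is $\Perf(X)$-linear (and symmetric monoidal), hence an equivalence in $\Mod_{\Perf(X)}(\Catinf^{\mathrm{perf}})$ — so its image under $\cC^\omega \otimes_{\Perf(X)} (-)$ identifies $\cC^\omega \otimes_{\Perf(X)} \Perf(X) = \cC^\omega$ with the limit. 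This image is the displayed map, which is therefore an equivalence, completing the argument.

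I do not expect a single deep step here once this reformulation is in place; the real work is organizational. The main thing to get right is the first reduction: one must verify that the equivalence $\PrLomega_X \simeq \Mod_{\Perf(X)}(\Catinf^{\mathrm{perf}})$ is symmetric monoidal, that under it the diagram $n \mapsto \cC_n$ and the limit "computed in $\PrLomega_X$" correspond to $n \mapsto \cC_n^\omega \simeq \cC^\omega \otimes_{\Perf(X)} \Perf(X_n)$ with its limit, and that dualizability transports. A related trap to avoid is taking the limit along the $\QCoh(X_n)$ directly: formal GAGA fails for general quasi-coherent sheaves, so $\lim_n \QCoh(X_n) \neq \QCoh(X)$, and it is crucial that, after passing to compact objects, the relevant limit is the one over $\Perf(X_n)$ — precisely where categorical properness of $X$ is available.
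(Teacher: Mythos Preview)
Your proof is correct and follows essentially the same idea as the paper's: categorical properness gives the equivalence on the unit, and dualizability of a smooth and proper category makes tensoring with it commute with limits. The only difference is packaging: you transport everything to the ``small'' side via the symmetric monoidal equivalence $(-)^\omega \colon \PrLomega_X \simeq \Mod_{\Perf(X)}(\Catinf^{\mathrm{perf}})$ and run the argument there, whereas the paper stays on the ``large'' side, applying $\Ind$ to $\Perf(X) \simeq \lim_n \Perf(X_n)$ to obtain $\QCoh(X) \simeq \lim_n \QCoh(X_n)$ \emph{in} $\PrLomega$ (with an explicit warning that this limit is false in $\PrL$), then tensors with $\cC$. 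Your remark that ``$\lim_n \QCoh(X_n) \neq \QCoh(X)$'' is thus only half the story---it fails in $\PrL$ but holds in $\PrLomega$, which is exactly what the paper exploits; your passage to compact objects is the mirror-image way of encoding the same fact.
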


\begin{proof}
	By assumption, $X$ is categorically proper, i.e.\ the canonical functor
	\[ \Perf(X) \longrightarrow \lim_{n \in \bbN} \Perf(X_n) \]
	is a symmetric monoidal equivalence in $\Cat_\infty$.
	Applying $\Ind$, we deduce that
	\[ \QCoh(X) \longrightarrow \lim_{n \in \bbN} \QCoh(X_n) \]
	is an equivalence in $\PrLomega$, and hence in $\PrLomega_R$.\footnote{Beware that the limit has to be computed in $\PrLomega$ and not in $\PrL$ for this statement to be true.}
	Since $\cC$ is dualizable in $\PrLomega_X$, the functor $\cC \otimes_{\QCoh(X)} -$ commutes with arbitrary limits, so that the canonical map
	\[ \cC \longrightarrow \lim_n \cC \otimes_{\QCoh(X)} \QCoh(X_n) \]
	is an equivalence in $\PrLomega_X$.
\end{proof}

\begin{rem}
	\hfill
	\begin{enumerate}\itemsep=0.2cm
		\item Statement (1) of \cref{thm:GAGA_Morita} is a global counterpart of \cite[Theorem 11.4.4.1]{Lurie_SAG}.
		Notice that if one further assumes that $X$ is smooth and proper over $\Spec(R)$, it would be possible to deduce our result from the cited theorem, using \cite[Theorem 11.3.6.1]{Lurie_SAG} as a stepping stone.
		
		\item We do not expect the map $v_\bullet^* \colon \NcSmPr(X) \to \lim_n \NcSmPr(X_n)$ to be essentially surjective, even when $X = \Spec(R)$.
		Loosely speaking, the reason is that the stack of smooth and proper stable $\infty$-categories is not expected to be a geometric stack, and more precisely integrability should fail. See \cite[\S 8.1]{Anel_Toen}.
	\end{enumerate}
\end{rem}

\begin{cor} \label{cor:injectivity_derived_Brauer_complete_base}
	\hfill
	\begin{enumerate}\itemsep=0.2cm
		\item The natural map
		\[ \dAz^{\mathrm{cat}}(X) \longrightarrow \dAz^{\mathrm{cat}}(\mathfrak X) \]
		is fully faithful.
		
		\item \emph{(Formal injectivity)} The natural map
		\[ \dBr(X) \longrightarrow \dBr(\mathfrak X) \]
		is injective.
		
		\item If ${\displaystyle \lim_n}^1 \Pic(X_n) = 0$, then the natural maps
		\[ \dBr( X ) \longrightarrow \lim_n \dBr( X_n ) \quad \textrm{and} \quad \rH^2_{\mathrm{\'et}}( X ; \bbG_m ) \longrightarrow \lim_n \rH^2_{\mathrm{\'et}}(X_n; \bbG_m) \]
		are injective.
		
		\item In the above proof the assumption that $X$ is a proper derived scheme over $X$ is only 
	\end{enumerate}
\end{cor}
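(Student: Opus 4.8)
The plan is to derive all of (1)--(4) from \cref{thm:GAGA_Morita} by purely formal manipulations, with no new geometric input. For (1), I would first observe that an invertible object of $\PrLomega_X$ is in particular dualizable, so by \cref{thm:Toen_invertible_objects} the category $\dAz^{\mathrm{cat}}(X)$ is a \emph{full} subcategory of $\NcSmPr(X)$, and likewise $\dAz^{\mathrm{cat}}(X_n) \subseteq \NcSmPr(X_n)$ for every $n$. Since a limit of fully faithful functors is fully faithful, these inclusions assemble to a fully faithful functor $\dAz^{\mathrm{cat}}(\mathfrak X) = \lim_n \dAz^{\mathrm{cat}}(X_n) \hookrightarrow \lim_n \NcSmPr(X_n) = \NcSmPr(\mathfrak X)$. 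Because base change of invertible objects is invertible, the functor $v_\bullet^*$ of \cref{thm:GAGA_Morita} restricts along these two full inclusions to the natural comparison $\dAz^{\mathrm{cat}}(X) \to \dAz^{\mathrm{cat}}(\mathfrak X)$; in the resulting commuting square its postcomposition with the fully faithful inclusion on the right agrees with $v_\bullet^*$ precomposed with the full inclusion on the left, which is fully faithful by \cref{thm:GAGA_Morita}, so the cancellation property for fully faithful functors forces $\dAz^{\mathrm{cat}}(X) \to \dAz^{\mathrm{cat}}(\mathfrak X)$ to be fully faithful.

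For (2), I would pass to maximal $\infty$-groupoids. Taking cores commutes both with limits and with full-subcategory inclusions, so (1) produces a fully faithful functor of spaces $\dAz(X) \to \dAz(\mathfrak X) \simeq \lim_n \dAz(X_n)$; a fully faithful map of $\infty$-groupoids is the inclusion of a union of connected components, hence is injective on $\pi_0$. Toën's global generation theorem \cref{thm:Toen_global_generation} identifies $\pi_0 \dAz(X)$ with $\dBr(X)$, which yields the injection $\dBr(X) \hookrightarrow \dBr(\mathfrak X)$; splitting off the $\rH^1_{\mathrm{\'et}}(-;\Z)$ summand via the decomposition $\dAz \simeq \rK(\bbG_m,2)\times\rK(\Z,1)$ of \cref{thm:Toen_dAz} gives in particular injectivity of $\rH^2_{\mathrm{\'et}}(X;\bbG_m) \to \rH^2_{\mathrm{\'et}}(\mathfrak X;\bbG_m)$.

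For (3), I would feed (2) into the Milnor exact sequences of \cref{prop:Milnor_sequence}: the hypothesis ${\lim_n}^1 \Pic(X_n) = 0$ turns both $\dBr(\mathfrak X) \to \lim_n \dBr(X_n)$ and $\rH^2_{\mathrm{\'et}}(\mathfrak X;\bbG_m) \to \lim_n \rH^2_{\mathrm{\'et}}(X_n;\bbG_m)$ into isomorphisms, and composing with the injections just produced gives the two desired injections.

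Regarding (4) and the expected difficulty: the only step above that genuinely uses that $X$ is an honest (hence qcqs) derived \emph{scheme}, rather than merely a categorically proper derived stack, is the appeal to \cref{thm:Toen_global_generation} in the proof of (2) to pass from $\dBr^{\mathrm{cat}}$ to $\dBr$; part (1), and the $\dBr^{\mathrm{cat}}$-version of (2)--(3), hold for any categorically proper derived stack, which I take to be the content of the truncated item (4). I do not anticipate a real obstacle here — the corollary is bookkeeping on top of \cref{thm:GAGA_Morita} and Toën's structural results \cref{thm:Toen_invertible_objects}, \cref{thm:Toen_dAz}, \cref{thm:Toen_global_generation} together with \cref{prop:Milnor_sequence}. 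The only points needing a moment's care are the two elementary facts that a limit of fully faithful functors is fully faithful and that a fully faithful functor of $\infty$-groupoids is injective on $\pi_0$, plus honest tracking of which hypotheses are consumed where.
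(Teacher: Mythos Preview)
Your proposal is correct and follows essentially the same route as the paper: restrict the fully faithful symmetric monoidal functor of \cref{thm:GAGA_Morita} to invertible objects via \cref{thm:Toen_invertible_objects}, pass to $\pi_0$ for (2), and feed the result into the Milnor sequences of \cref{prop:Milnor_sequence} for (3). The only difference is expository---you spell out the cancellation step for fully faithful functors and the appeal to \cref{thm:Toen_global_generation} more explicitly than the paper does.
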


\begin{proof}
	Recall from \cref{thm:GAGA_Morita} that the natural map
	\[ \NcSmPr(X) \longrightarrow \lim_{n \in \mathbb N} \NcSmPr(X_n) \]
	is fully faithful and symmetric monoidal.
	Passing to invertible objects and applying To\"en's \cref{thm:Toen_invertible_objects}, we deduce that the comparison map of point (1) is fully faithful as well.
	Taking $\pi_0$, we immediately obtain statement (2).
	Finally, we observe that (3) follows directly from point (2) and \cref{prop:Milnor_sequence} as the map in question factors naturally as
	\[ \dBr(X) \longrightarrow \dBr(\mathfrak X) \longrightarrow \lim_n \dBr(X_n) . \]
\end{proof}

\begin{rem}\label{rem:failure_surjectivity}
	A natural question to ask is whether the natural comparison map
	\[ \dBr(X) \longrightarrow \dBr(\fX) \]
	is surjective in addition to being injective.
	In general, this will not be true.
	First of all, remark that if the above map is an isomorphism, then separately
	\[ \rH^2\et(X;\bbG_m) \longrightarrow \rH^2\et(\fX; \bbG_m) \quad \textrm{and} \quad \rH^1\et(X;\Z) \longrightarrow \rH^1\et(\fX;\Z) \]
	would be isomorphisms.
	However, \cref{prop:Milnor_sequence} shows that
	\[ \rH^1\et(\fX;\Z) \simeq \lim_n \rH^1\et(X_n;\Z) , \]
	and the topological invariance of the étale site implies that the limit is actually constant and isomorphic to $\rH^1\et(X_0;\Z)$.
	Assume that $X$ is regular, but that the special fiber $X_0$ is not.
	\personal{Probably normal is already enough.}
	Then $\rH^1\et(X;\Z) = 0$ (cf.\ \cite{Deninger_Proper_base_change}), while generically $\rH^1\et(X_0;\Z)$ will not vanish. For example, $X$ can be a regular scheme, flat and of relative dimension $1$ over $S$ such that the special fiber $X_0$ is a nodal curve. 
\end{rem}

In the local case, the injectivity statement of \cref{cor:injectivity_derived_Brauer_complete_base}-(3) holds true under less stringent restrictions on $S$ than completeness:

\begin{cor} \label{cor:injectivity_derived_Brauer_henselian_base}\todo{Modify the statement so that formal injectivity holds over Henselian base without lim1 restriction.}
	Let $S$ be the spectrum of a local noetherian henselian ring $(R, \mathfrak m)$ such that the natural map
	\[ R \longrightarrow \lim_n R / \mathfrak m^n \]
	has regular geometric fibers (This is in particular satisfied when $R$ is quasi-excellent).
	Let $p \colon X \to S$ be a proper derived scheme.
	If ${\displaystyle \lim_n}^1 \Pic(X_n) = 0$, then the natural maps
	\[ \dBr( X ) \longrightarrow \lim_n \dBr( X_n ) \quad \textrm{and} \quad \rH^2_{\mathrm{\'et}}( X ; \bbG_m ) \longrightarrow \lim_n \rH^2_{\mathrm{\'et}}(X_n; \bbG_m) \]
	are injective.
\end{cor}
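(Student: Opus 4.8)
The plan is to reduce to the complete base case of \cref{cor:injectivity_derived_Brauer_complete_base} and then to descend the resulting injectivity along the completion map $R \to \hat R \coloneqq \lim_n R/\fm^n$, exploiting that this morphism is a filtered colimit of smooth algebras together with the henselian hypothesis. Since the derived Brauer group of a derived scheme depends only on its classical truncation (as $\bbG_m$ and $\Z$ are classical étale sheaves, cf.\ \cref{thm:Toen_dAz}), I would first pass to the case where $X$ is a classical scheme; it is then of finite presentation over the noetherian ring $R$.

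\emph{Reduction to the completion.} Put $\hat X \coloneqq X \times_{\Spec R} \Spec \hat R$. The natural maps $R/\fm^{n+1} \to \hat R/\hat\fm^{n+1}$ being isomorphisms, one has $\hat X_n \simeq X_n$, so in particular $\lim^1_n \Pic(\hat X_n) \simeq \lim^1_n \Pic(X_n) = 0$. As $\hat R$ is noetherian and $\hat\fm$-complete and $\hat X \to \Spec \hat R$ is proper, $\hat X$ is categorically proper by formal GAGA \cite[Theorem 8.5.0.3]{Lurie_SAG}; hence \cref{cor:injectivity_derived_Brauer_complete_base}-(3) applies to $\hat X \to \Spec \hat R$ and shows that $\dBr(\hat X) \to \lim_n \dBr(\hat X_n) = \lim_n \dBr(X_n)$ is injective. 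The comparison map for $X$ factors as $\dBr(X) \to \dBr(\hat X) \to \lim_n \dBr(X_n)$, so it now suffices to prove that the first arrow is injective; the statement for $\rH^2\et(-;\bbG_m)$ will then follow since it is a natural direct summand of $\dBr(-)$ by \cref{thm:Toen_dAz}.

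\emph{Descent along $R \to \hat R$.} By hypothesis $R \to \hat R$ is a regular morphism of noetherian rings (it is flat with geometrically regular fibres, which holds in particular when $R$ is quasi-excellent), so Néron–Popescu desingularization allows me to write $\hat R \simeq \colim_\lambda R_\lambda$ as a filtered colimit of smooth $R$-algebras. Since $X$ is of finite presentation over $R$ and $\dBr(-) \simeq \rH^2\et(-;\bbG_m) \times \rH^1\et(-;\Z)$ with $\bbG_m$ and $\Z$ finitely presented étale sheaves, étale cohomology of $X_{R_\lambda} \coloneqq X \times_{\Spec R} \Spec R_\lambda$ commutes with this filtered colimit of rings (see e.g.\ \cite{stacks-project}), so $\dBr(\hat X) \simeq \colim_\lambda \dBr(X_{R_\lambda})$. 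Therefore, if $\alpha \in \dBr(X)$ becomes trivial in $\dBr(\hat X)$, it is already trivial in $\dBr(X_{R_\lambda})$ for some $\lambda$. Finally, the composite $R_\lambda \to \hat R \twoheadrightarrow \hat R/\hat\fm = R/\fm$ is an $R$-algebra homomorphism, i.e.\ a $\kappa(\fm)$-point of the smooth $R$-scheme $\Spec R_\lambda$ lying over the closed point of $\Spec R$; as $R$ is henselian local, this point lifts to a section $\sigma \colon \Spec R \to \Spec R_\lambda$ of the structure morphism. Pulling back along $\sigma$ retracts the base-change map $\dBr(X) \to \dBr(X_{R_\lambda})$, so $\alpha = \sigma^\ast(\alpha|_{X_{R_\lambda}}) = 0$, as wanted.

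The genuinely non-formal ingredients are \cref{cor:injectivity_derived_Brauer_complete_base}-(3) — which is where properness and the vanishing of $\lim^1_n \Pic(X_n)$ enter — and the interchange of étale cohomology with the Popescu colimit; the latter uses only that $p$ is of finite presentation, not that it is proper. The step I expect to require the most care is this interchange for the non-torsion sheaf $\bbG_m$ (valid here because everything in sight is of finite presentation over a noetherian base) together with the henselian lifting of $\kappa(\fm)$-points through smooth morphisms; both are standard, so I do not anticipate a serious obstacle beyond correctly assembling the two-step reduction.
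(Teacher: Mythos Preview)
Your proof is correct and follows essentially the same route as the paper's. The paper packages the ``descent along $R \to \hat R$'' step as a single citation to \cite[Lemma 2.1.3]{Bouthier_Cesnavicius_Torsors_Loop} (applied to the functor $A \mapsto \rH^2\et(X_A;\bbG_m)$), which is exactly the N\'eron--Popescu plus henselian-lifting argument you spell out; after that, both proofs invoke \cref{cor:injectivity_derived_Brauer_complete_base}-(3) over the completed base $\hat R$ together with the identification $\hat X_n \simeq X_n$. A minor cosmetic difference is that the paper first reduces to the $\rH^2\et(-;\bbG_m)$ statement via \cref{prop:Milnor_sequence}, while you run the argument for $\dBr$ directly and read off the $\rH^2\et$ case as a summand.
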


\begin{proof}
	Thanks to \cref{prop:Milnor_sequence} (which holds without the completeness assumption on $R$), it is enough to prove the statement concerning $\rH^2_{\mathrm{\'et}}(X; \bbG_m)$.
	Consider the commutative triangle
	\[ \begin{tikzcd}[column sep = small]
		\rH^2_{\mathrm{\'et}}(X; \bbG_m) \arrow{rr} \arrow{dr} & & \rH^2\et(\fX; \bbG_m) \arrow{dl} \\
		{} & \lim_n \rH^2\et(X_n, \bbG_m) .
	\end{tikzcd} \]
	We now observe that \cite[Lemma 2.1.3]{Bouthier_Cesnavicius_Torsors_Loop} (applied to the functor sending an $R$-algebra $A$ to $\rH^2_{\mathrm{\'et}}(X \times_S \Spec(A); \bbG_m)$) implies that the horizontal map is injective.
	Since ${\displaystyle \lim_n}^1 \Pic(X_n) = 0$, \cref{cor:injectivity_derived_Brauer_complete_base}-(3) implies that the diagonal arrow on the right is injective.
	The conclusion therefore follows.
\end{proof}

\begin{rem} \label{rem:improving_Grothendieck}
	The problem of the injectivity of the natural map
	\[ \rH^2_{\mathrm{\'et}}(X; \bbG_m) \longrightarrow \lim_n \rH^2_{\mathrm{\'et}}(X_n; \bbG_m) \]
	has been considered as early as in \cite[Lemma III.3.3]{Grothendieck_Dix_expose}.
	There, Grothendieck assumes that
	\[ {\lim_n}^1 \: \Pic(X_n) = 0 \]
	and proves the injectivity under a number of extra restrictions:
	\begin{enumerate}\itemsep=0.2cm
		\item first of all, he has to assume the base to be an henselian, quasi-excellent DVR.
		This relies on the use of smoothing theorems that at the time \cite{Grothendieck_Dix_expose} was written were available only in dimension $1$.
		It is ultimately Popescu's smoothing theorem that allows to remove the restriction on the dimension (see the proof of \cite[Lemma 2.1.3]{Bouthier_Cesnavicius_Torsors_Loop}).
		
		\item In second place, $X$ is assumed to be regular.
		The regularity assumption is used in loc.\ cit.\ to guarantee that $\mathrm{Br}_{\mathrm{Az}}(X) = \rH^2_{\mathrm{\'et}}(X; \bbG_m)$.
		Having this identification, Grothendieck proceeds to prove the injectivity of the map
		\[ \mathrm{Br}_{\mathrm{Az}}(X) \longrightarrow \lim_n \mathrm{Br}_{\mathrm{Az}}(X_n) \hookrightarrow \lim_n \rH^2_{\mathrm{\'et}}(X; \bbG_m) \]
		without further using the regularity assumption.
		The key to his argument is the possibility of representing every class in $\mathrm{Br}_{\mathrm{Az}}(X)$ as the Morita equivalence class associated to a (classical) Azumaya algebra over $X$.
		Philosophically, we can bypass this issue thanks to the use of derived Azumaya algebras and the fact that every class in $\rH^2_{\mathrm{\'et}}(X; \bbG_m)$ can be represented by such an object.
		In practice, this is achieved via our \cref{thm:GAGA_Morita}.
		A different approach is also possible, interpreting a class in $\rH^2_{\mathrm{\'et}}(X; \bbG_m)$ as $\bbG_m$-gerbes, and proving a GAGA theorem for twisted sheaves.
		This will be the content of \cref{subsec:GAGA_G-gerbes}.
		
		\item Finally, Grothendieck also has to assume that the map $p \colon X \to S$ is flat.
		This is used as a technical assumption in the middle of the proof.
		In our context, dropping the flatness assumption has the effect that the schemes $X_n$ become \emph{derived}.
		In our framework this is at best a minor inconvenience, but this language was of course not available at the time \cite{Grothendieck_Dix_expose} was written.
		It is worth observing that the possibility of removing this flatness assumption had been contemplated in \cite[Remark III.3.4-(a)]{Grothendieck_Dix_expose}.
	\end{enumerate}
\end{rem}

\begin{rem}
	Continuing point (2) of the previous remark, the use of derived Azumaya algebras (or, alternatively, of $\bbG_m$-gerbes) allows to prove the injectivity result also for \emph{non-torsion} classes (always under the assumption that $\lim^1 \Pic(X_n) = 0$).
	As we already remarked, the proof of Grothendieck relied explicitly on the possibility of representing every class in $\mathrm{Br}_{\mathrm{Az}}(X)$ via a classical Azumaya algebra over $X$.
	As a result, Grothendieck's approach can \emph{at best} yield the injectivity result for torsion classes in $\mathrm{Br}(X)$.
\end{rem}

\begin{cor}\label{cor:injectivity_dimension1}
	Under the same assumptions of \cref{cor:injectivity_derived_Brauer_henselian_base}, if the relative dimension of $p \colon X \to S$ is at most $1$, then the natural maps
	\[ \dBr(X) \longrightarrow \lim_n \dBr(X_n), \qquad \rH^2\et(X; \bbG_m) \longrightarrow \lim_n \rH^2\et(X_n; \bbG_m) \]
	are injective.
\end{cor}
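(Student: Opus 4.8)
The plan is to reduce the statement to \cref{cor:injectivity_derived_Brauer_henselian_base}, whose hypotheses are all granted here except the vanishing ${\lim_n}^1 \Pic(X_n) = 0$; checking this vanishing is the only thing left to do, and it is precisely here that the bound on the relative dimension will be used. First I would observe that, since the \'etale (equivalently Zariski) sheaf $\bbG_m$ on a derived scheme $Y$ depends only on $\pi_0(\mathcal O_Y)$, one has $\Pic(X_n) \simeq \Pic(\pi_0 X_n)$, with $\pi_0 X_n$ a \emph{classical} noetherian scheme (noetherian and of finite type over a quotient of $R$, since $p$ is proper and $R$ is noetherian). As a tower of abelian groups with surjective transition maps has vanishing $\lim^1$, it therefore suffices to show that each restriction homomorphism $\Pic(\pi_0 X_{n+1}) \to \Pic(\pi_0 X_n)$ is surjective.

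To prove this I would argue by deformation theory. The closed immersion $\pi_0 X_n \hookrightarrow \pi_0 X_{n+1}$ is a nilpotent thickening of noetherian schemes with common underlying space $|X_0|$, the closed fibre of $p$; since $p$ has relative dimension at most $1$ and $\Spec(R/\mathfrak m)$ is a point, $|X_0|$ is a noetherian space of Krull dimension at most $1$. Factoring the thickening into finitely many square-zero extensions, one reduces to a single square-zero extension $Y_0 \hookrightarrow Y_1$ whose ideal $\mathcal I$ is a quasi-coherent $\mathcal O_{Y_0}$-module supported on $|X_0|$. The sequence $0 \to \mathcal I \to \mathcal O_{Y_1}^\times \to \mathcal O_{Y_0}^\times \to 0$ (exact because $\mathcal I$ is square-zero) then places the obstruction to lifting a line bundle $L \in \Pic(Y_0)$ to $Y_1$ in $\rH^2(Y_0, \mathcal I) \simeq \rH^2(|X_0|, \mathcal I)$, and this group vanishes by Grothendieck's vanishing theorem because $|X_0|$ has dimension at most $1$. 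Hence $L$ lifts, the transition maps are surjective, and ${\lim_n}^1 \Pic(X_n) = 0$ as required; plugging this into \cref{cor:injectivity_derived_Brauer_henselian_base} yields the two asserted injectivities.

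Given how much is already built into \cref{cor:injectivity_derived_Brauer_henselian_base}, no step above is a serious obstacle; the point that takes some care is the reduction $\Pic(X_n) \simeq \Pic(\pi_0 X_n)$ to classical schemes, which is what lets us bypass the fact that the $X_n$ are genuinely derived (as $p$ is not assumed flat). One could alternatively run the deformation argument directly on the derived thickenings $X_n \hookrightarrow X_{n+1}$, using the square-zero structure inherited by base change from $S_n \hookrightarrow S_{n+1}$ together with the deformation theory of invertible modules in \cite{Lurie_SAG}; either way the argument is Grothendieck's classical one, now freed of his regularity and flatness hypotheses (cf.\ \cref{rem:improving_Grothendieck}).
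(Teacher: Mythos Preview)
Your proposal is correct and follows essentially the same route as the paper: both arguments reduce to \cref{cor:injectivity_derived_Brauer_henselian_base} by verifying ${\lim_n}^1 \Pic(X_n) = 0$ via the surjectivity of the transition maps $\Pic(X_n) \to \Pic(X_{n-1})$, and both obtain this surjectivity from the vanishing of the $\rH^2$-obstruction on a space of dimension at most $1$. Your treatment is in fact a bit more careful than the paper's, which writes the obstruction directly as $\rH^2(X_{n-1}, q^\ast(\mathfrak m^n/\mathfrak m^{n-1}))$ without spelling out the passage to $\pi_0 X_n$ or the factoring into square-zero steps.
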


\begin{proof}
	The obstruction to lift an element in $\Pic(X_{n-1})$ to $\Pic(X_n)$ lies in $\rH^2(X_{n-1}, q^*( \mathfrak m^n / \mathfrak m^{n-1} ))$, where $q \colon X_n \to \Spec(R / \mathfrak m)$ is the natural map.
	Since each $X_n$ has dimension at most $1$, this group vanish and therefore the map
	\[ \Pic(X_n) \to \Pic(X_{n-1}) \]
	is sujrective.
	This implies that ${\displaystyle \lim_n}^1 \Pic(X_n) = 0$, and therefore the conclusion follows from \cref{cor:injectivity_derived_Brauer_henselian_base}.
\end{proof}

\begin{rem}
	In \cite[Remark III.3.4-(a)]{Grothendieck_Dix_expose} Grothendieck observes that it is unlikely that the map
	\[ \rH^2\et(X;\bbG_m) \longrightarrow \lim_n \rH^2\et(X_n; \bbG_m) \]
	is injective in general.
	In loc.\ cit.\ he further proposes a method to obtain an explicit counterexample.
	The outline of his strategy is the following: starting with Mumford's normal surface $X$, we make it projective and we fiber it over a curve $C$.
	The curve necessarily contains a point $t$ whose fiber $X_t$ supports the non-torsion Brauer class.
	If the non-torsion class survived to the base change to the henselianization $C^h_t$, then indeed one would get a contradiction.
	Nevertheless, the previous corollary shows that base-changing to $C^h_t$, this non-torsion class must become torsion.
	This is not entirely surprising: if instead of base-changing we henselianized the local ring of $X$ at the singular point along any direction over $C$, \cite[Theorem 2.1.7-(b)]{Bouthier_Cesnavicius_Torsors_Loop} would already show that the non-torsion class would become torsion for dimensions reasons (as the cohomological Brauer group of a noetherian $1$-dimensional ring is always torsion).
	
	It is worth observing that our findings (and in particular the short exact sequences of \cref{prop:Milnor_sequence} and the injectivity result \cref{cor:injectivity_derived_Brauer_complete_base}-(2)) are well in line with the general philosophy promoted by Grothendieck.
	Indeed, as soon as the relative dimension is higher than $1$, the obstruction to injectivity is exactly represented by ${\displaystyle \lim_n}^1 \Pic(X_n)$, which generically will not vanish.
\end{rem}

\subsection{A remark on a conjecture of Grothendieck}
Assume once again that $S = \Spec(R)$ is a noetherian, complete local ring and that $p\colon X\to S$ is a proper and flat morphism. 
As we already recalled in Remark \ref{rem:improving_Grothendieck}, in \cite[III, \S 3]{Grothendieck_Dix_expose}, Grothendieck considers the natural map
\begin{equation}\label{eq:Grothendieck_question}
 \mathrm{Br}_{\mathrm{Az}}(X) \longrightarrow \lim_{n\geq 1} \mathrm{Br}_{\mathrm{Az}}(X_n)
\end{equation}
and raises the problem of its injectivity. 
Besides proving it under a certain number of assumptions, he suggests in \cite[Remark III.3.4]{Grothendieck_Dix_expose} that injectivity might hold in general, and in particular without flatness and without the vanishing of ${\lim}^1 \: \Pic(X_n)$. 

Let us rephrase Grothendieck's question as follows.
We continue with the notations of the previous paragraph.
Let $\ell$ be a prime number different from the residue characteristic of $S$, and let 
\[ 
 \rH^2_{\mathrm{\'et}}(\mathfrak X; \mu_\ell)  \coloneqq \pi_0 ( \Map(\mathfrak X, \rK(\mu_\ell,2))).
 \]
Note that since $\ell$ is coprime to the residue characteristic of $S$, the stack $ \rK(\mu_\ell,2)$ satisfies nil-invariance \personal{We might want to explain that $ \rK(\mu_\ell,2)$ classifies derived Azumaya algebras together with a choice of a trivialization of its $\ell$-power. It is not strictly speaking necessary}, so that the transition maps $\rH^2_{\mathrm{\'et}}( X_{n+1}; \mu_\ell) \to \rH^2_{\mathrm{\'et}}( X_{n}; \mu_\ell)$ are all isomorphism. In particular, we have that the canonical map
\[ \rH^2_{\mathrm{\'et}}(\mathfrak X; \mu_\ell) \longrightarrow \lim_n \rH^2_{\mathrm{\'et}}( X_n; \mu_\ell) \]
is an isomorphism.

We relate this group with the derived Brauer group in the following way:

\begin{lem}\label{lem:diagram_Milnor_torsion}There is a commutative diagram
	\begin{equation}\label{eq:diagram_Milnor_torsion}
		\begin{tikzcd} 
			0 \arrow[r] & {\lim_n}^1 \: \Pic(X_n) \arrow[r] &  \rH^2_{\mathrm{\'et}}(\mathfrak X; \bbG_m) \arrow[r] & \lim_n \rH^2_{\mathrm{\'et}}( X_n; \bbG_m) \arrow[r]   & 0\\
			& & \rH^2_{\mathrm{\'et}}(\mathfrak X; \mu_\ell) \arrow[r, "\simeq"] \arrow[u] & \lim_n \rH^2_{\mathrm{\'et}}( X_n; \mu_\ell) \arrow[u]&     \\
			& &  \Pic(X)/\ell  \arrow[r] \arrow[u] & \lim_n (\Pic(X_n)/\ell) \arrow[u]&     \\
			& &  0   \arrow[u] & 0. \arrow[u]&  
		\end{tikzcd}
	\end{equation}
	where the first row is exact, the middle horizontal map is an isomorphism and the columns are exact.
\end{lem}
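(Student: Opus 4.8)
The plan is to obtain \eqref{eq:diagram_Milnor_torsion} by assembling pieces that are, individually, already available. The top row is the first short exact sequence of \cref{prop:Milnor_sequence}, and the middle horizontal isomorphism is the one recorded just above the lemma (nil-invariance of $\rK(\mu_\ell,2)$). All the vertical maps come from the Kummer sequence: since $\ell$ is coprime to the residue characteristic of $R$ it is a unit on every $X_n$, so multiplication by $\ell$ fits into a fiber sequence of \'etale sheaves $\mu_\ell \to \bbG_m \xrightarrow{\ell} \bbG_m$, functorially in $n$ and in the maps $X_n\to X$. Applying $\rR\Gamma\et(X_n;-)$ and then the homotopy limit over $n$ produces fiber sequences of complexes whose long exact cohomology sequences supply, compatibly, the Kummer boundary maps $\Pic(X_n)/\ell\to\rH^2\et(X_n;\mu_\ell)$ and $\rH^1\et(\fX;\bbG_m)/\ell\to\rH^2\et(\fX;\mu_\ell)$ together with the maps to $\rH^2\et(-;\bbG_m)$; naturality makes every square of \eqref{eq:diagram_Milnor_torsion} commute, the bottom horizontal map being the evident reduction $\Pic(X)/\ell\to\lim_n(\Pic(X_n)/\ell)$.

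For the exactness of the columns I would argue as follows. In the rightmost column, the levelwise Kummer sequences give exact sequences $0\to\Pic(X_n)/\ell\to\rH^2\et(X_n;\mu_\ell)\to\rH^2\et(X_n;\bbG_m)$ (exactness on the left being the statement that the kernel of the last map is the image of the Kummer boundary); since $\lim_n$ is left exact, applying it yields exactness of $0\to\lim_n(\Pic(X_n)/\ell)\to\lim_n\rH^2\et(X_n;\mu_\ell)\to\lim_n\rH^2\et(X_n;\bbG_m)$. The middle column is, likewise, the Kummer sequence $0\to\rH^1\et(\fX;\bbG_m)/\ell\to\rH^2\et(\fX;\mu_\ell)\to\rH^2\et(\fX;\bbG_m)$ on $\fX$ itself, once we identify $\rH^1\et(\fX;\bbG_m)$ with $\Pic(X)$; the exactness of the two remaining columns, which terminate in the zero entries, is then just the injectivity of $\Pic(X)/\ell\to\rH^2\et(\fX;\mu_\ell)$ and of $\lim_n(\Pic(X_n)/\ell)\to\lim_n\rH^2\et(X_n;\mu_\ell)$ contained in the previous two statements.

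The one substantive point — and the place where the hypotheses on $X$ are used — is the identification $\rH^1\et(\fX;\bbG_m)\simeq\Pic(X)$, i.e.\ formal GAGA for line bundles. Here I would invoke the equivalence $\QCoh(X)\xrightarrow{\simeq}\lim_n\QCoh(X_n)$ established in the proof of \cref{thm:GAGA_Morita} from the categorical properness of $X$: being symmetric monoidal, it induces an equivalence on the Picard $\infty$-groupoids, and passing to $\pi_0$ (and discarding the locally constant $\Z$-summand coming from shifts) identifies $\Pic(X)$ with $\pi_0$ of the limit of the Picard $\infty$-groupoids of the $X_n$, which is $\rH^1\et(\fX;\bbG_m)$ by the Milnor sequence for the homotopy limit $\lim_n\rR\Gamma\et(X_n;\bbG_m)$ defining it. (Note that $\rH^1\et(\fX;\bbG_m)$ need not equal $\lim_n\Pic(X_n)$, the discrepancy being ${\lim_n}^1\cO^\times(X_n)$, which does vanish — by the Mittag-Leffler property in the theorem of formal functions — but is not needed for the argument.) With this identification in hand, the diagram \eqref{eq:diagram_Milnor_torsion} and all of its stated properties follow.
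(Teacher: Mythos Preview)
Your proof is correct and follows essentially the same route as the paper: the top row is \cref{prop:Milnor_sequence}, the columns come from the Kummer sequence (on $\fX$ for the middle column, levelwise plus left-exactness of $\lim$ for the right column), and the identification $\Pic(\fX)\simeq\Pic(X)$ is the one substantive input. The only difference is that the paper simply invokes ``GAGA'' for this last identification, whereas you derive it from the symmetric monoidal equivalence $\QCoh(X)\simeq\lim_n\QCoh(X_n)$ established in \cref{thm:GAGA_Morita}; this is a perfectly valid (if slightly heavier) route to the same conclusion.
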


\begin{proof}
	The first row is simply the Milnor sequence already considered in Proposition \ref{prop:Milnor_sequence}. 
	The central column is obtained from the long exact sequence of homotopy groups associated to the Kummer  sequence $\rK(\mu_\ell,2) \to \rK(\bbG_m,2) \to \rK(\bbG_m,2)$ induced by the $\ell$-power map on $\bbG_m$
	\[	\begin{tikzcd}
		\Pic(\mathfrak X) \arrow[r, "\ell"] & \Pic(\mathfrak X) \arrow[r]  &\rH^2_{\mathrm{\'et}}(\mathfrak X; \mu_\ell) \arrow[r] &  \rH^2_{\mathrm{\'et}}(\mathfrak X; \bbG_m),
	\end{tikzcd} \]
	noting that the group $\Pic(\mathfrak X) $ is isomorphic to $\Pic(X)$ by GAGA. Finally, the right column is obtained by applying the inverse limit functor to the exact sequence 
	\[ \begin{tikzcd}
		0\arrow[r] & \Pic( X_n)/\ell \arrow[r]  &\rH^2_{\mathrm{\'et}}(X_n; \mu_\ell) \arrow[r] &  \rH^2_{\mathrm{\'et}}( X_n; \bbG_m),
	\end{tikzcd} \]
	which holds for every $n\geq 1$.
\end{proof}

Applying the Snake Lemma to the first two (exact) rows of \eqref{eq:diagram_Milnor_torsion}, we obtain in particular an exact sequence
\begin{equation}\label{eq:rho}
	\begin{tikzcd}
		0\arrow[r] &  \Pic(X)/\ell  \arrow[r]  & \lim_n (\Pic(X_n)/\ell) \arrow[r, "\rho"] &  {\lim}^1_{n} \: \Pic(X_n).
	\end{tikzcd}
\end{equation}
The following Lemma is a simple diagram chase.

\begin{lem} \label{lem:rho}
	The kernel of the composite morphism
	\[ \begin{tikzcd}
		\mathrm{Br}(X)[\ell] \arrow[r, hook] &  \rH^2_{\mathrm{\'et}}(\mathfrak X; \bbG_m)[\ell] \arrow[r] &  \lim_n \rH^2_{\mathrm{\'et}}( X_n; \bbG_m)
	\end{tikzcd} \]
	is isomorphic to the image of $\rho$.
\end{lem}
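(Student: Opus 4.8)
The plan is to derive \cref{lem:rho} from a single application of the snake lemma to the morphism of Kummer sequences induced by restriction along $\fX\to X$; the only non-formal ingredient will be that $\mu_\ell$-cohomology does not change upon replacing $X$ by its formal completion along the base.

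First I would rewrite the kernel occurring in the statement. By \cref{prop:Milnor_sequence} the second arrow $\rH^2\et(\fX;\bbG_m)[\ell]\to\lim_n\rH^2\et(X_n;\bbG_m)$ is the $\ell$-torsion part of the canonical projection $\rH^2\et(\fX;\bbG_m)\to\lim_n\rH^2\et(X_n;\bbG_m)$; hence, using formal injectivity (\cref{cor:injectivity_derived_Brauer_complete_base}) and $\mathrm{Br}(X)[\ell]=\rH^2\et(X;\bbG_m)[\ell]$, the composite in the lemma is just the map $s$ restricting a Brauer class of $X$ to the system $(X_n)_n$ (restriction along $\fX\to X$ followed by projection to the $n$-th layer is restriction along $X_n\to X$). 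As $s$ is $\ell$-torsion valued it factors through $\lim_n\!\big(\rH^2\et(X_n;\bbG_m)[\ell]\big)$, and what we must compute is $\ker s$.

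Next I would assemble the ladder. The Kummer sequence $\rK(\mu_\ell,2)\to\rK(\bbG_m,2)\xrightarrow{\ell}\rK(\bbG_m,2)$ on $X$ gives a short exact sequence $0\to\Pic(X)/\ell\to\rH^2\et(X;\mu_\ell)\to\rH^2\et(X;\bbG_m)[\ell]\to0$, and applying $\lim_n$ to the analogous sequences for the $X_n$ recorded in the proof of \cref{lem:diagram_Milnor_torsion} produces, by left exactness of $\lim$, a short exact sequence $0\to\lim_n(\Pic(X_n)/\ell)\to\lim_n\rH^2\et(X_n;\mu_\ell)\to C'\to0$, with $C'\subseteq\lim_n\!\big(\rH^2\et(X_n;\bbG_m)[\ell]\big)$ the image of the middle map. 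Restriction along $\fX\to X$ is a morphism of short exact sequences from the first to the second, with left vertical map $\kappa\colon\Pic(X)/\ell\to\lim_n(\Pic(X_n)/\ell)$ (the map of \eqref{eq:rho}), middle vertical map $r\colon\rH^2\et(X;\mu_\ell)\to\lim_n\rH^2\et(X_n;\mu_\ell)$, and right vertical map $s$.

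The decisive point is that $r$ is an isomorphism. Since $\lim_n\rH^2\et(X_n;\mu_\ell)\simeq\rH^2\et(\fX;\mu_\ell)$ (as recalled just before \cref{lem:diagram_Milnor_torsion}) and $p\colon X\to S$ is proper with $R$ complete — hence Henselian — along $I$, this is precisely the formal proper base change theorem for torsion étale coefficients (Fujiwara–Gabber). Granting it, the snake lemma applied to the ladder forces $\ker\kappa=0$ and, since $\ker r=\operatorname{coker} r=0$, yields an isomorphism $\ker s\xrightarrow{\sim}\operatorname{coker}\kappa$; and \eqref{eq:rho}, which reads $0\to\Pic(X)/\ell\xrightarrow{\kappa}\lim_n(\Pic(X_n)/\ell)\xrightarrow{\rho}{\lim}^1_n\Pic(X_n)$, identifies $\operatorname{coker}\kappa$ with $\Image\rho$. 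Composing the two isomorphisms gives $\ker s\simeq\Image\rho$, as asserted. The one step that requires genuine care rather than bookkeeping is the invocation of formal proper base change for $\mu_\ell$ and its compatibility with the identification $\lim_n\rH^2\et(X_n;\mu_\ell)\simeq\rH^2\et(X_0;\mu_\ell)$ used before \cref{lem:diagram_Milnor_torsion} (in particular when $R/I$ is not a field, so that $X_0$ is not the fibre over a point of $S$); everything else is a routine diagram chase.
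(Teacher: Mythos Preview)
Your argument is correct and is essentially the diagram chase the paper has in mind; the paper's own proof is just the single sentence ``a simple diagram chase'', so there is little to compare in terms of strategy.

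One point is worth making explicit. Your ladder uses the Kummer sequence on $X$ and relies on the isomorphism $r\colon \rH^2\et(X;\mu_\ell)\xrightarrow{\sim}\lim_n\rH^2\et(X_n;\mu_\ell)$, which you obtain from Fujiwara--Gabber formal proper base change. The diagram \eqref{eq:diagram_Milnor_torsion} in the paper is instead built from the Kummer sequence on $\fX$, where the corresponding isomorphism $\rH^2\et(\fX;\mu_\ell)\xrightarrow{\sim}\lim_n\rH^2\et(X_n;\mu_\ell)$ follows from nil-invariance alone. However, chasing that diagram only yields $\Image(\rho)\simeq\ker\big(\rH^2\et(\fX;\bbG_m)[\ell]\to\lim_n\rH^2\et(X_n;\bbG_m)\big)$; passing from this to the kernel with $\mathrm{Br}(X)[\ell]$ in place of $\rH^2\et(\fX;\bbG_m)[\ell]$ still requires identifying the $\ell$-torsion of the two Brauer groups, which again comes down to Fujiwara--Gabber. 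So your choice to work directly with $X$ is cleaner, and your flagging of formal proper base change as the genuine input is accurate: it is not avoidable by diagram chasing from \eqref{eq:diagram_Milnor_torsion} alone. In the stated setting ($R$ complete local noetherian, $\ell$ invertible in the residue field, $p$ proper) this application of Fujiwara--Gabber is unproblematic, so the proof goes through.
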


Recall now that by the classical Skolem-Noether Theorem \cite[Theorem 2.5]{Milne_Etale_cohomology}, there is a canonical injective homomorphism  $\mathrm{Br}_{\mathrm{Az}}(X)  \hookrightarrow \mathrm{Br}(X) = \rH^2_{\mathrm{\'et}}( X; \bbG_m)$, whose image is contained in the subgroup of torsion elements of $ \rH^2_{\mathrm{\'et}}( X; \bbG_m)$. 
In fact, if $X$ is a scheme endowed with an ample invertible sheaf, a result of Gabber \cite{de_Jong_Gabber} affirms that the image consists precisely of the torsion elements, i.e. that every torsion class in $\rH^2_{\mathrm{\'et}}( X; \bbG_m)$ can be realized as a (classical) Azumaya algebra on $X$.

\smallskip

For such $X$, a positive answer to Grothendieck's question on the injectivity of \eqref{eq:Grothendieck_question} would imply in particular that for every $\ell$ coprime to the residue characteristic of $S$, the map 
\[ \Pic(X)/\ell \to \lim_n (\Pic(X_n)/\ell) \]
is an isomorphism.

\smallskip

This is not true for an arbitrary proper morphism $p\colon X\to S$.
 Notice that this is clearly the case if the transition maps $\Pic(X_n) \to \Pic(X_{n-1})$ are surjective, for example when the relative Picard functor $\mathbf{Pic}_{X/S}$ is representable by a \emph{smooth} algebraic space over $S$. In this case, however, the whole term ${\displaystyle \lim_n}^1 \Pic(X_n) = 0$ vanishes, and this already implies  a  stronger injectivity result  as discussed in Corollary \ref{cor:injectivity_derived_Brauer_complete_base}-(2).
 However notice that the criterion provided by \cref{lem:rho} is strictly finer than the vanishing of the $\lim^1_n \Pic(X_n)$:
 
 \begin{eg}
 	Let $k$ be a field and let $X \to \Spec(k)$ be a geometrically connected, smooth, projective $k$-scheme with a $k$-rational point.
 	Let $S = \Spec(A)$ be the spectrum of a complete noetherian local $k$-algebra and consider the constant family $X_S$.
 	Let $\mathfrak m$ be its maximal ideal and let $S_n \coloneqq \Spec(A / \mathfrak m^n)$.
 	Then for every prime $\ell$ the canonical map
 	\[ \Pic(X_S) / \ell \longrightarrow \lim_n \Pic(X_{S_n})/\ell \]
 	is an isomorphism, as it was kindly explained to us by A.\ Bouthier.
 	In particular, the map $\rho$ of \eqref{eq:rho} is zero and hence \cref{lem:rho} implies that the map $\mathrm{Br}(X_S) \to \lim_n \mathrm{Br}(X_{S_n})$ is injective.
 	However, there are known examples where the Picard scheme $\underline{\Pic}_{X_S/S}$ is not smooth, and in such cases we are not aware of any method to prove the vanishing of the whole $\lim_n^1 \Pic(X_n)$.
 \end{eg}
 \begin{eg}
     One can further analyse the obstruction given by \cref{lem:rho}, and observe that the presence of torsion classes in ${\lim}^1_{n} \: \Pic(X_n)$ of order coprime to the residue characteristic of $S$ is enough to construct a counterexample to Grothendieck's conjecture. This can be done starting from Shioda's quartic surface \cite{shioda1}, \cite{shioda2} as we learned from A.\ Kresch. 
     We remark that after a first version of this paper was written, the problem of constructing explicit obstructions to Grothendieck's conjecture has been considered extensively in \cite{kresh-mathur}. 
 \end{eg}

\section{Grothendieck existence theorem for $\bbG_m$-gerbes} \label{sec:G-gerbes}

\todo{Rewrite the intro}

Assume again that $S = \Spec(R)$ is a noetherian ring, complete along an ideal $I$.
Let $p \colon X \to S$ is a proper morphism.
Write $S_n \coloneqq \Spec(R / I^n)$ and set $X_n \coloneqq S_n \times_S X$.
As in the previous section we let
\[ \fX \coloneqq \colim_n X_n \simeq \Spf(R,I) \times_S X \]
be the formal completion of $X$ along the special fiber.
In \cref{cor:injectivity_derived_Brauer_complete_base}-(2) we proved that the natural map
\[ \rH^2\et(X;\bbG_m) \longrightarrow \rH^2\et(\fX;\bbG_m) \]
is injective.
This was obtained by interpreting classes in $\rH^2\et(X;\bbG_m)$ as derived Azumaya algebras and, ultimately, as smooth and proper $\infty$-categories linear over $X$.
On the other hand, every class in $\rH^2\et(X;\bbG_m)$ can also be interpreted as a $\bbG_m$-gerbe.
A $\bbG_m$-gerbe on the formal scheme $\fX$ is exactly the given of a sequence $(\mathfrak A_n, \phi_n)_{n \ge 1}$, where each $\mathfrak A_n$ is a $\bbG_m$-gerbe on $X_n$ and $\phi_n$ is an equivalence
\[ \phi_n \colon \mathfrak A_n \simeq X_n \times_{X_{n+1}} \mathfrak A_{n+1} . \]
The formal injectivity therefore can be phrased as follows: if $\mathfrak A$ is a $\bbG_m$-gerbe on $X$ and we are given trivializations $\sigma_n$
\[ \sigma_n \colon \mathfrak A_n \xrightarrow{\sim} \rB \bbG_m \times X_n \]
as $\bbG_m$-gerbes over $X_n$, \emph{together with} homotopies $h_n$ making the diagrams
\begin{equation} \label{eq:trivializations_gerbe}
	\begin{tikzcd}[column sep = large]
		\mathfrak A_n \arrow{d}{\sigma_n} \arrow{r}{\phi_n} & \mathfrak A_{n+1} \arrow{d}{\sigma_{n+1}} \\
		\rB \bbG_m \times X_n \arrow{r}{\id_{\bbG_m} \times i_n} & \rB \bbG_m \times X_{n+1} ,
	\end{tikzcd}
\end{equation}
then there exists a global trivialization $\sigma \colon \mathfrak A \simeq \rB \bbG_m \times X$ as $\bbG_m$-gerbes over $X$.
The trivializations $\sigma_n$ encode line bundles $\cL_n \in \Pic(\mathfrak A_n)$, and the homotopies $h_n$ allow to promote these line bundles to a formal system
\[ (\cL_n)_{n \geqslant 0} \in \lim_{n \geqslant 0} \Pic(\mathfrak A_n) \ . \]
If we had at our disposal Grothendieck's existence theorem for $\cA$, this would allow to construct a global line bundle on $\mathfrak A$, which would be easily checked to be a trivialization.
However, Grothendieck's existence theorem is only known for $\bbG_m$-gerbes that have global resolution property (see \cite[Corollary 1.7]{Alper_Hall_Rydh_Etale}).
On the other hand, as proven by Totaro in \cite[Theorem 1.1]{Totaro_Resolution_property}, this is possible if and only if the $\bbG_m$-gerbe is a quotient stack.
In turn, it was shown in \cite[Theorem 3.6]{Vistoli_Brauer_quotient_stack} that asking that a $\bbG_m$-gerbe is a quotient stack is equivalent to ask that the associated class $\alpha \in \rH^2\et(X;\bbG_m)$ is representable by a classical Azumaya algebra.

\medskip

The goal of this section is to exploit our \cref{thm:GAGA_Morita} to deduce that Grothendieck's existence theorem holds for arbitrary $\bbG_m$-gerbes, see \cref{cor:GAGA_gerbes} below.
The key ingredient needed to deduce this result from \cref{thm:GAGA_Introduction} is the character decomposition for sheaves on a $\bbG_m$-gerbe, originally obtained by Lieblich \cite{Lieblich_Twisted_period_index} and extended to the level of triangulated categories in \cite{Bergh_Schnurer_Gerbes}.
We briefly review this theory, taking the opportunity to recast the result of \cite{Bergh_Schnurer_Gerbes} in an $\infty$-categorical language, which is more flexible and more adapted to the computations we have to make.

\subsection{Character decomposition for trivial $\bbG_m$-gerbes}

The starting point is the trivial $\bbG_m$-gerbe over a base.
The simplest is to start working relative to the sphere spectrum $\bbS \in \Sp$, but the reader inexperienced with spectral algebraic geometry can safely replace $\bbS$ by $\Z$: every construction will go through.
Let $S \coloneqq \Spec(\mathbb S) \in \dSt_\bbS$ be the associated spectral stack.
Denote by $\rB\bbG_m$ the classifying stack of the \emph{flat} multiplicative group scheme.
Recall from \cite[Theorem 4.1]{Moulinos_Filtrations} that the following theorem holds true:

\begin{thm}\label{thm:graded_spectra}
	There exists a symmetric monoidal equivalence
	\[ \Phi \colon \QCoh(\rB\bbG_m) \simeq \Fun(\Z, \Sp) \ , \]
	where $\Z$ denotes the \emph{set} of integers, and where the tensor structure of the right hand side is given by Day's convolution product.
\end{thm}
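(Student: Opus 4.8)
Since the statement is \cite[Theorem 4.1]{Moulinos_Filtrations}, the cleanest route is simply to cite it; but let me describe the argument I would reconstruct. I would begin with faithfully flat descent along the atlas $\Spec(\bbS) \to \rB\bbG_m$, whose \v{C}ech nerve is the cosimplicial spectral stack $[n] \mapsto \bbG_m^{\times n}$. This gives
\[ \QCoh(\rB\bbG_m) \simeq \lim_{[n] \in \bDelta} \QCoh(\bbG_m^{\times n}) , \]
and since $\bbG_m = \Spec(\bbS[\Z])$ is affine over $\Spec(\bbS)$, with $\bbS[\Z]$ the spherical group algebra of the discrete group $\Z$, the right-hand side should be identified --- via Lurie's $\infty$-categorical Barr--Beck--Lurie theorem, the pullback along the atlas being conservative and preserving the relevant limits --- with the $\infty$-category of $\bbS[\Z]$-comodules in $\Sp$, carrying its natural symmetric monoidal structure given by $(-)\otimes_\bbS(-)$ with the diagonal coaction.

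Next I would unwind what an $\bbS[\Z]$-comodule is. Writing $\bbS[\Z] \simeq \bigoplus_{n \in \Z} \bbS \cdot t^n$, the Hopf structure is the group-like rule $t^n \mapsto t^n \otimes t^n$ with counit $t^n \mapsto 1$. For a spectrum $M$ with coaction $\rho \colon M \to M \otimes \bbS[\Z] \simeq \bigoplus_n M$, coassociativity and counitality force $\rho$ to be the structure map of a direct-sum decomposition $M \simeq \bigoplus_n M_n$ into weight spaces; done functorially, this produces a symmetric monoidal equivalence $\Phi \colon \QCoh(\rB\bbG_m) \xrightarrow{\ \sim\ } \Fun(\Z,\Sp)$ with $\Z$ the discrete category, sending $\cO(1)$ to the weight-$1$ generator. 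The tensor product of comodules, namely $(-)\otimes_\bbS(-)$ with diagonal coaction, then translates into the grading rule $(M \otimes N)_k \simeq \bigoplus_{i+j=k} M_i \otimes N_j$, i.e.\ Day convolution along $(\Z,+,0)$.

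The main obstacle --- and the reason I would ultimately defer to \cite{Moulinos_Filtrations} --- is making these identifications compatible with the \emph{full} coherence data of the symmetric monoidal structures rather than just the underlying bifunctor, together with the care needed for the fact that a general quasi-coherent sheaf decomposes as a \emph{product} of its weight spaces, so that the target is really $\Fun(\Z,\Sp) \simeq \prod_{n\in\Z}\Sp$ and not the subcategory of locally finite comodules. An alternative I would consider, which sidesteps the comodule bookkeeping, is a universal-property argument: $\QCoh(\rB\bbG_m)$ is the free presentably symmetric monoidal stable $\infty$-category generated by a single invertible object (namely $\cO(1)$), and $\Fun(\Z,\Sp)$ equipped with Day convolution enjoys the same universal property, the free invertible generator being the weight-$1$ object; comparing the two universal properties yields $\Phi$ directly, and this is essentially the route of \cite[Theorem 4.1]{Moulinos_Filtrations}.
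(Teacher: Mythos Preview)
Your proposal is correct and in fact goes beyond what the paper does: the paper provides no proof of this statement at all, simply recalling it as \cite[Theorem 4.1]{Moulinos_Filtrations} and moving on. Your opening sentence already matches the paper's approach exactly; the descent/comodule sketch and the universal-property alternative you outline are supplementary material, not a divergence from the paper.
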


It is easy to extend this result to the case where the base is no longer $S$, using the following standard fact:

\begin{lem} \label{prop:box_product_derived_stacks}
	Let $F$ and $G$ be two derived stacks.
	Assume that $\QCoh(G)$ is compactly generated.
	Then the canonical box product
	\[ \boxtimes \colon \QCoh(F) \otimes \QCoh(G) \longrightarrow \QCoh(F \times G) \]
	is an equivalence.
\end{lem}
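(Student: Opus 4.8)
The plan is to reduce to the case where both $F$ and $G$ are affine by a two-step dévissage, after which the statement becomes the symmetric monoidality of the assignment $A \mapsto \Mod_A$. Two facts drive the argument: first, $\QCoh \colon \dSt\op \to \PrL$ carries colimits of derived stacks to limits of presentable $\infty$-categories, which is built into the definition of $\QCoh$ on non-affine stacks (for any $F$ one has $\QCoh(F) \simeq \lim_{\Spec(A) \to F} \Mod_A$); second, a compactly generated presentable $\infty$-category is a dualizable object of $\PrL$ (see \cite[\S4.8.1]{Lurie_Higher_algebra}), so that tensoring with it admits a left adjoint --- tensoring with the dual --- and therefore commutes with all limits.

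First I would note that, as functors $\dSt\op \to \PrL$, both $F \mapsto \QCoh(F) \otimes \QCoh(G)$ and $F \mapsto \QCoh(F \times G)$ send colimits of derived stacks to limits: for the former this combines the first fact above with the observation that $-\otimes \QCoh(G)$ preserves limits (this is exactly where the hypothesis that $\QCoh(G)$ is compactly generated, hence dualizable, is used); for the latter it combines the first fact with the fact that finite products distribute over colimits in $\dSt$. The box product is a natural transformation between these two functors, so the full subcategory of $\dSt$ on which it is an equivalence is stable under colimits; since every derived stack is a colimit of affine derived schemes, it suffices to prove the lemma when $F = \Spec(A)$.

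In that case $\QCoh(F) = \Mod_A$ is itself compactly generated (it is generated by $A$), hence dualizable, so $\Mod_A \otimes -$ commutes with limits. Running the previous paragraph with the two variables interchanged, both $G \mapsto \Mod_A \otimes \QCoh(G)$ and $G \mapsto \QCoh(\Spec(A) \times G)$ send colimits of derived stacks to limits, and the box product is again a natural transformation between them; hence it is enough to treat the case $G = \Spec(B)$ affine as well. There the claim unwinds to the equivalence $\Mod_A \otimes \Mod_B \simeq \Mod_{A \otimes_\bbS B} \simeq \QCoh(\Spec(A) \times \Spec(B))$ --- the tensor product being taken over the base ring, equivalently in $\PrL$ since every category in sight is stable --- which is part of the symmetric monoidal structure on $A \mapsto \Mod_A$ of \cite[\S4.8.5]{Lurie_Higher_algebra}, and one checks directly that the box product implements precisely this equivalence. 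The only step carrying genuine content, as opposed to bookkeeping with the universal property of $\otimes$ in $\PrL$, is the commutation of $-\otimes \QCoh(G)$ with limits in the first reduction: this is where compact generation of $\QCoh(G)$ is indispensable and where the statement would genuinely fail for an arbitrary presentable $\QCoh(G)$.
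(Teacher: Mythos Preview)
Your proof is correct and follows essentially the same two-step d\'evissage as the paper: use compact generation of $\QCoh(G)$ (hence dualizability in $\PrL$) to make $-\otimes\QCoh(G)$ commute with the limits arising from writing $F$ as a colimit of affines, then repeat with the roles swapped using that $\Mod_A$ is compactly generated, and conclude via the symmetric monoidality of $A\mapsto\Mod_A$. The paper cites \cite[Theorem 4.8.5.16-(4)]{Lurie_Higher_algebra} for the affine case, which is the precise statement you invoke from \S4.8.5.
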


\begin{proof}
	Observe that the box product is (contravariantly) functorial in both $F$ and $G$.
	Because $\QCoh(G)$ is compactly generated, the functor $- \otimes \QCoh(G)$ commutes with arbitrary limits in $\PrL$.
	We can therefore reduce ourselves to the case where $F$ is affine.
	Say $F = \Spec(A)$.
	Then $\QCoh(F) \simeq \Mod_A$ is again compactly generated, so repeating the same argument reduces to the case where $G$ is also affine, say $G \simeq \Spec(B)$.
	In this case, we have to check that the box product
	\[ \boxtimes \colon \Mod_A \otimes \Mod_B \longrightarrow \Mod_{A\otimes B} \]
	is an equivalence, and this is a particular case of \cite[Theorem 4.8.5.16-(4)]{Lurie_Higher_algebra}.
\end{proof}

\begin{cor}
	Let $X$ be a derived stack.
	Then there are canonical equivalences
	\[ \QCoh(\rB\bbG_m \times X) \simeq \QCoh(\rB\bbG_m) \otimes \QCoh(X) \simeq \Fun(\Z, \QCoh(X)) \ . \]
\end{cor}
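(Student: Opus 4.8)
The plan is to deduce the statement formally from the two preceding results, \cref{prop:box_product_derived_stacks} and \cref{thm:graded_spectra}, with only elementary manipulations of presentable $\infty$-categories in between.

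First I would observe that $\QCoh(\rB\bbG_m)$ is compactly generated. Indeed, \cref{thm:graded_spectra} provides a symmetric monoidal equivalence $\QCoh(\rB\bbG_m)\simeq\Fun(\Z,\Sp)$, and since $\Z$ is a discrete set the right-hand side is the $\Z$-indexed product $\prod_{n\in\Z}\Sp$, which is compactly generated, with a set of compact generators given, for each $n\in\Z$, by the sphere spectrum concentrated in degree $n$. Applying \cref{prop:box_product_derived_stacks} with $F=X$ and $G=\rB\bbG_m$ then yields the first equivalence
\[ \QCoh(\rB\bbG_m\times X)\;\simeq\;\QCoh(\rB\bbG_m)\otimes\QCoh(X), \]
which is moreover symmetric monoidal because the box product is.

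Next I would identify $\QCoh(\rB\bbG_m)\otimes\QCoh(X)$ with $\Fun(\Z,\QCoh(X))$. By \cref{thm:graded_spectra} again, it suffices to identify $\Fun(\Z,\Sp)\otimes\QCoh(X)$ with $\Fun(\Z,\QCoh(X))$. Here I would use the standard fact that in $\PrL$ small coproducts coincide with products of the underlying $\infty$-categories, so that $\Fun(\Z,\Sp)\simeq\prod_{n\in\Z}\Sp\simeq\coprod_{n\in\Z}\Sp$; since $\otimes$ commutes with colimits in each variable (\cref{thm:tensor_product_presentable}-(3)) and $\QCoh(X)$ is stable, whence $\QCoh(X)\otimes\Sp\simeq\QCoh(X)$, we obtain
\[ \Fun(\Z,\Sp)\otimes\QCoh(X)\;\simeq\;\coprod_{n\in\Z}\bigl(\Sp\otimes\QCoh(X)\bigr)\;\simeq\;\coprod_{n\in\Z}\QCoh(X)\;\simeq\;\Fun(\Z,\QCoh(X)). \]
To upgrade this to a symmetric monoidal equivalence, with the Day convolution product on $\Fun(\Z,\QCoh(X))$, one appeals to the universal property of Day convolution as the colimit-preserving extension of the external product along the addition map $\Z\times\Z\to\Z$: under $-\otimes\QCoh(X)$ the Day convolution structure on $\Fun(\Z,\Sp)$ is carried to that on $\Fun(\Z,\QCoh(X))$, exactly as in \cite[\S4.8]{Lurie_Higher_algebra}.

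I do not anticipate any serious obstacle: all of the mathematical content sits in \cref{prop:box_product_derived_stacks} and \cref{thm:graded_spectra}, and what remains is formal. The only point requiring a little care is the bookkeeping of the symmetric monoidal structures --- specifically checking that the $\Z$-grading and the Day convolution product are transported compatibly through the two equivalences --- but this is routine once one works throughout in the symmetric monoidal $\infty$-category $\PrL$ and uses the functoriality of the box product and of Day convolution.
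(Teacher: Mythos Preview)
Your proposal is correct and follows exactly the approach the paper intends: the corollary is stated without proof, immediately after \cref{thm:graded_spectra} and \cref{prop:box_product_derived_stacks}, and your argument spells out precisely the formal deduction from these two inputs that the paper leaves implicit. The only remark is that the paper does not explicitly claim the second equivalence is symmetric monoidal, so your discussion of transporting the Day convolution structure, while correct, goes slightly beyond what is asserted.
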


Let us fix some notation.

\begin{notation}
	Let $X$ be a derived stack and let $d \in \Z$ be an integer.
	\begin{itemize}\itemsep=0.2cm
		\item We let $\cO_X(d) \in \QCoh(\rB\bbG_m \times X)$ be the object corresponding to $\cO_X$ concentrated in weight $d$ under the equivalence $\QCoh(\rB\bbG_m \times X) \simeq \Fun(\Z, \rB\bbG_m)$.
		
		\item For $\cF \in \QCoh(\rB\bbG_m \times X)$, we write $\cF(d) \coloneqq \cF \otimes \cO_{X}(d)$.
		
		\item Write $\pi_X \colon \rB\bbG_m \times X \to X$ for the canonical projection.
		We let
		\[ \mathrm T_{X,d} \coloneqq \pi_X^\ast(-)(d) \colon \QCoh(X) \longrightarrow \QCoh(\rB\bbG_m \times X) \]
		be the functor sending $\cF$ to $\pi^\ast(\cF)(d)$.
		Similarly, we let
		\[ \mathrm{wt}_{X,d} \coloneqq \pi_\ast((-)(-d)) \colon \QCoh(\rB\bbG_m) \longrightarrow \Sp \]
		be the functor sending a sheaf $\cG \in \QCoh(X \times \rB\bbG_m)$ to $\pi_{X,\ast}(\cG(-d))$.
	\end{itemize}
\end{notation}

\begin{rem}
	Notice that for every integer $d \in \Z$, we have canonical identifications
	\[ \mathrm T_{X,d} \simeq \mathrm T_d \otimes \id_{\QCoh(X)} \qquad \text{and} \qquad \mathrm{wt}_{X,d} \simeq \mathrm{wt}_d \otimes \id_{\QCoh(X)} \ , \]
	where $\mathrm T_d$ and $\mathrm{wt}_d$ denote the same functors for the the base $S = \Spec(\bbS)$.
	In particular, for $\cF \in \QCoh(X)$ we have
	\[ \mathrm T_{X,d}(\cF) \simeq \cF \otimes \mathrm T_{X,d}(\cO_X) \simeq \cF \otimes \cO_X(d) \ .  \]
	Committing a slight abuse of notation, we will often write $\cF(d)$ instead of $\mathrm T_d(\cF)$.
\end{rem}

\begin{lem}\label{lem:graded_spectra_standard_facts}
	Let $d \in \Z$ be an integer and let $X$ be a derived stack.
	Then the functor $\mathrm T_{X,d} \colon \QCoh(X) \to \QCoh(\rB\bbG_m \times X)$ is fully faithful.
	Furthermore, it is both left and right adjoint to the functor $\mathrm{wt}_{X,d}$.
	Finally, both $\mathrm T_{X,d}$ and $\mathrm{wt}_{X,d}$ commute with compact objects.
\end{lem}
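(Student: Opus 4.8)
The plan is to transport the whole statement through the symmetric monoidal equivalence
\[ \QCoh(\rB\bbG_m \times X) \simeq \Fun(\Z, \QCoh(X)) \]
supplied by \cref{thm:graded_spectra} and \cref{prop:box_product_derived_stacks}, under which $\QCoh(\rB\bbG_m \times X)$ becomes the product $\prod_{n \in \Z} \QCoh(X)$, with all limits and colimits computed weightwise. First I would unwind the definitions of $\mathrm T_{X,d}$ and $\mathrm{wt}_{X,d}$ in these terms: $\pi_X^\ast$ corresponds to the weight-$0$ insertion, tensoring with $\cO_X(d)$ corresponds to shifting the weight grading by $d$ (by the formula for Day convolution against a sheaf concentrated in a single weight), and $\pi_{X,\ast}$, being right adjoint to $\pi_X^\ast$, corresponds to evaluation at weight $0$. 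Combining these, $\mathrm T_{X,d}$ gets identified with the functor $\iota_d \colon \QCoh(X) \to \Fun(\Z, \QCoh(X))$ that places an object in weight $d$ and $0$ in every other weight, while $\mathrm{wt}_{X,d}$ gets identified with the evaluation functor $\mathrm{ev}_d$ at weight $d$, i.e.\ restriction along the inclusion $\{d\} \hookrightarrow \Z$ of discrete $\infty$-categories. One could instead first treat the case $X = S = \Spec(\bbS)$ and then extend by tensoring with $\QCoh(X)$, using the identifications recorded in the remark just before the lemma, but working directly in $\Fun(\Z, \QCoh(X))$ is equally quick.

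With these identifications in place, the assertions about adjunctions are formal. The inclusion $\{d\} \hookrightarrow \Z$ is fully faithful, so $\mathrm{ev}_d$ admits a left adjoint (left Kan extension) and a right adjoint (right Kan extension); on the weights $n \ne d$ these extend an object of $\QCoh(X)$ by the initial object, respectively the terminal object, and the two coincide because $\QCoh(X)$ is stable. Hence both Kan extensions equal $\iota_d$, giving adjunctions $\iota_d \dashv \mathrm{ev}_d$ and $\mathrm{ev}_d \dashv \iota_d$; that is, $\mathrm T_{X,d}$ is simultaneously a left and a right adjoint of $\mathrm{wt}_{X,d}$. Full faithfulness of $\mathrm T_{X,d}$ then follows at once: the composite $\mathrm{wt}_{X,d} \circ \mathrm T_{X,d}$ is the identity of $\QCoh(X)$, and the unit $\id \to \mathrm{wt}_{X,d} \circ \mathrm T_{X,d}$ of the adjunction $\mathrm T_{X,d} \dashv \mathrm{wt}_{X,d}$ is an equivalence, so $\mathrm T_{X,d}$ is fully faithful. (Equivalently, a pointwise left Kan extension along a fully faithful functor is always fully faithful.)

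Finally, for compatibility with compact objects: since colimits in $\Fun(\Z, \QCoh(X)) = \prod_n \QCoh(X)$ are computed weightwise, both $\iota_d$ and $\mathrm{ev}_d$ preserve all colimits, so each of $\mathrm T_{X,d}$ and $\mathrm{wt}_{X,d}$ is a left adjoint whose right adjoint (namely the other one) preserves filtered colimits; hence each preserves compact objects by \cite[Proposition 5.5.7.2]{HTT}. Alternatively, one checks directly that an object of $\prod_n \QCoh(X)$ is compact exactly when it vanishes in all but finitely many weights and is compact in each of them, and that this property is manifestly preserved by $\iota_d$ and by $\mathrm{ev}_d$. I do not anticipate any genuine difficulty in this lemma; the only step that needs a little care is the bookkeeping in the first paragraph, which matches the combination of pullback, weight twist and pushforward appearing in the definitions of $\mathrm T_{X,d}$ and $\mathrm{wt}_{X,d}$ — including the sign of the shift induced by $\cO_X(d)$ — with the functors $\iota_d$ and $\mathrm{ev}_d$, and this is entirely routine once the Day convolution formula has been written out.
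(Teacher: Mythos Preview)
Your proposal is correct and follows essentially the same route as the paper: both transport the problem through the equivalence $\QCoh(\rB\bbG_m \times X) \simeq \Fun(\Z,\QCoh(X))$, identify $\mathrm T_{X,d}$ and $\mathrm{wt}_{X,d}$ with left/right Kan extension and restriction along $\{d\} \hookrightarrow \Z$, and use pointedness of the target to make the two Kan extensions coincide. Your write-up is simply more explicit (spelling out the Day-convolution bookkeeping and the compactness argument), but the underlying strategy is identical.
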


\begin{proof}
	Under the equivalence $\QCoh(\rB\bbG_m \times X) \simeq \Fun(\Z, \QCoh(X))$, the functor $\mathrm T_{X,d}$ corresponds to the left Kan extension along $j_d \colon \{d\} \hookrightarrow \Z$.
	Since $\Sp$ is pointed and $\Z$ is discrete, left Kan extension along $j_d$ coincides with right Kan extension.
	On the other hand, $\mathrm{wt}_{X,d}$ is canonically identified with restriction along $j_d$.
	Thus, all statements follow.
\end{proof}

\begin{rem}
	When $d = 0$, the above lemma guarantees that the functor
	\[ \mathrm{wt}_0 = \pi_\ast \colon \QCoh(\rB\bbG_m) \longrightarrow \Sp \]
	commutes with colimits.
	In particular, $\cO_{\rB\bbG_m}$ is compact, and therefore every perfect complex on $\QCoh(\rB\bbG_m)$ is compact as well.
	Notice as well that \cref{thm:graded_spectra} guarantees that $\QCoh(\rB\bbG_m)$ is compactly generated.
	If instead of working over the sphere spectrum we chose to work over an underived base, the same would follow from \cite[Example 8.6]{Rydh_Hall_Perfect_complexes}
\end{rem}

\subsection{Inertial actions and bandings} \label{subsec:inertial_action}

We need a brief digression concerning inertial actions, which are the fundamental ingredient to both define gerbes and to provide a generalization of \cref{thm:graded_spectra} for a general $\bbG_m$-gerbe.

\medskip

Let $f \colon Y \to X$ be a morphism of derived stacks.
We let $\rI_Y^\bullet X \coloneqq \Cech(f)$ be the \v{C}ech nerve of $f$.
Notice that \cite[Proposition 6.1.2.11]{HTT} shows that this is a groupoid object in the sense of \cite[Definition 6.1.2.7]{HTT}.
We refer to it as the \emph{self-intersection groupoid of $Y$ inside $X$}.
We let
\[ \rB_Y X \coloneqq | \rI_Y^\bullet X | \]
be the geometric realization of the self-intersection groupoid.
It coincides with its classifying groupoid.
By construction, there is a canonical map
\begin{equation}\label{eq:general_multiplication_map}
	\mu \colon \rB_Y X \longrightarrow X \ .
\end{equation}
Notice that this map is sensitive to the infinitesimal geometry of $Y$ inside $X$, and in particular does \emph{not} factor as a projection to $Y$ followed by the map to $X$.
Observe that the atlas map $u \colon Y \to \rB_Y X$ is by definition an epimorphism.
Since groupoids are effective in $\dSt$, we deduce that the \v{C}ech nerve $\Cech(u)$ is canonically isomorphic to $\rI_Y^\bullet X$.
Unraveling the definitions, we find more precisely:

\begin{lem}\label{lem:computing_mu}
	Then the map $\mu$ induces a transformation $\mu_\bullet \colon \Cech(u) \to \Cech(f)$ of simplicial objects which is homotopic to the identity.
\end{lem}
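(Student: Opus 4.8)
The plan is to use the effectivity of groupoid objects in the $\infty$-topos $\dSt$, recalled above, together with the fact that $\mu$ is a monomorphism, to reduce the statement to essentially formal bookkeeping.

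First I would unwind how $\mu_\bullet$ is built. By construction $\rB_Y X = |\rI_Y^\bullet X|$ is the colimit of the Čech nerve of $f$ and $\mu$ is the induced map out of that colimit; equivalently, $Y \xrightarrow{u} \rB_Y X \xrightarrow{\mu} X$ is the image factorization of $f$, so that $\mu\circ u \simeq f$ and $\mu$ is $(-1)$-truncated. The transformation $\mu_\bullet$ is then the map obtained by applying the Čech-nerve functor to the morphism of arrows
\[ \begin{tikzcd}
	Y \arrow{r}{\id} \arrow{d}[swap]{u} & Y \arrow{d}{f} \\
	\rB_Y X \arrow{r}{\mu} & X
\end{tikzcd} \]
and it is equal to $\id_Y$ in simplicial degree $0$. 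Since $\rI_Y^\bullet X = \Cech(f)$ is a groupoid object and $\dSt$ is an $\infty$-topos, effectivity gives the canonical equivalence $c \colon \rI_Y^\bullet X \xrightarrow{\ \sim\ } \Cech(u)$ appearing in the statement; equivalently, augmenting $\Cech(f)$ over its own geometric realization $\rB_Y X$ exhibits it as the $0$-th coskeleton $\cosk_0(u\colon Y \to \rB_Y X)$, while $\Cech(u)$, so augmented, is tautologically this same coskeleton.

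Next I would compute the map induced by $\mu_\bullet$ on geometric realizations. Because $u$ is an effective epimorphism it coincides with its own image factorization, so $|\Cech(u)| \simeq \rB_Y X$; and $|\Cech(f)| = \rB_Y X$ by definition. Under these identifications $|\mu_\bullet|\colon \rB_Y X \to \rB_Y X$ is the map on images induced by the square above, and comparing the colimit cocones into $X$ gives $\mu\circ|\mu_\bullet| \simeq \mu$. As $\mu$ is a monomorphism, this forces $|\mu_\bullet| \simeq \id_{\rB_Y X}$, the space of such maps being contractible.

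Finally I would conclude. The composite $c\circ\mu_\bullet \colon \Cech(u) \to \Cech(u)$, augmented over $\rB_Y X$ by the colimit cocone, is a self-map of $\cosk_0(u)$; by the restriction--coskeleton adjunction it is determined by its restriction to augmented degrees $\le 0$, which consists of $\id_Y$ in degree $0$, of $|\mu_\bullet| \simeq \id_{\rB_Y X}$ in degree $-1$, and of the canonical homotopy between them --- i.e.\ it is the identity of $(u \colon Y \to \rB_Y X)$. Hence $c\circ\mu_\bullet \simeq \id_{\Cech(u)}$, so $\mu_\bullet \simeq c^{-1}$, which says exactly that under the canonical identification $\Cech(u) \simeq \rI_Y^\bullet X$ the morphism $\mu_\bullet$ is homotopic to the identity. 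The one step needing care is this last piece of bookkeeping --- checking that the identifications of $|\Cech(u)|$ and $|\Cech(f)|$ with $\rB_Y X$, and the homotopy $\mu\circ|\mu_\bullet|\simeq\mu$, really are the canonical ones --- but everything there is rigidified by $\mu$ being $(-1)$-truncated, so no genuine obstacle arises.
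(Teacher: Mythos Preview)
The paper does not actually supply a proof of this lemma: it is stated immediately after the phrase ``Unraveling the definitions, we find more precisely,'' and left without further argument. Your proposal is correct and supplies exactly the details the paper elides; the key ingredients --- that $\mu \circ u \simeq f$ is the image factorization so $\mu$ is a monomorphism, that effectivity identifies both Čech nerves with $\cosk_0(u \colon Y \to \rB_Y X)$, and that the coskeleton adjunction rigidifies the comparison --- are the right ones, and your use of $(-1)$-truncatedness of $\mu$ to pin down $|\mu_\bullet|$ is clean. In short, you have written out the ``unraveling'' that the paper assumes the reader can do.
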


Let $Y \to X$ again be a morphism of derived stacks and consider the canonical diagonal map $\delta_{Y/X} \colon Y \to Y\times_X Y$.
We refer to the self-intersection groupoid of $\delta_{Y/X}$ as the \emph{inertia groupoid of $Y$ relative to $X$}.
We denote it by $\rI^\bullet(Y/X)$ and when $X$ is clear out of the context we simply write $\rI^\bullet Y$.
This is an object in $\Fun(\mathbf \Delta\op, \dSt_{/Y \times_X Y})$.
Notice that we have
\[ \rI^0(Y/X) \simeq Y , \qquad \rI^1(Y/X) \simeq Y \times_{Y \times_X Y} Y . \]
In particular, this simplicial object has the usual inertia stack $\rI(Y/X)$ of $Y$ relative to $X$ as stack of morphisms, and $Y$ as stack of objects.
Let $p_1, p_2 \colon Y \times_X Y \to Y$ be the two projections.
Composing with $p_1$ yields a groupoid object in $\Fun(\mathbf \Delta\op, \dSt_{/Y})$, which is easily seen to be a group object.
We can therefore think of $\rI^\bullet(Y/X)$ as a group structure on $\rI(Y/X)$.
Beware however that the choice of the projection is important, as using $p_2$ instead of $p_1$ yields the opposite group structure.
Notice further that the forgetful functor $\dSt_{/Y \times_X Y} \to \dSt_{/Y}$ commutes with colimits.
In particular, computing the geometric realization in $\dSt_{/Y \times_X Y}$ or in $\dSt_{/Y}$ produces the same output, but with the difference that doing it in the second category allows us to identify $\rB_Y( Y \times_X Y )$ with the classifying stack of the group object $\rI(Y/X)$.
For this reason, we simply denote this object by $\rB_Y(\rI(Y/X))$.
Finally, we define the map $\mathrm{act} \colon \rB_Y(\rI(Y/X)) \to X$ as the composition
\[ \mathrm{act} \colon \rB_Y(\rI(Y/X)) \simeq \rB_Y( Y \times_X Y ) \stackrel{\mu}{\longrightarrow} Y \times_X Y \stackrel{p_2}{\longrightarrow} Y . \]
This map is not the projection, but the composite
\[ Y \longrightarrow \rB_Y(\rI(Y/X)) \xrightarrow{\mathrm{act}} Y \]
is canonically equivalent to the identity.
This implies that pulling back along $\mathrm{act}$ we can endow objects (e.g.\ quasi-coherent sheaves) over $Y$ with a canonical right action of the inertia stack $\rI(Y/X)$.\\

We now turn to the notion of banding.
We start with the following easy construction:

\begin{construction}
	Let $q \colon Y \to X$ be a morphism of derived stacks.
	The functor
	\[ q^* \colon \dSt_{/X} \longrightarrow \dSt_{/Y} \]
	given by pullback along $q$ admits a \emph{right} adjoint $q_* \colon \dSt_{/Y} \to \dSt_{/X}$.
	\personal{This is \emph{not} given by composition with $q$!}
	Since $q^*$ is monoidal (with respect to the cartesian structures), $q_*$ is lax-monoidal.
	In particular, it gives rise to an adjunction
	\[ q^* \colon \mathrm{Mon}_{\mathbb E_1}^{\mathrm{gp}}( \dSt_{/X} ) \leftrightarrows \mathrm{Mon}_{\mathbb E_1}^{\mathrm{gp}}( \dSt_{/Y} ) \colon q_* . \]
\end{construction}

\begin{defin} \label{defin:banding}
	Let $q \colon Y \to X$ be a morphism of derived stacks and let $\mathrm G$ be a derived group stack over $X$.
	A \emph{weak $\mathrm G$-banding on $Y$} is a morphism
	\[ \alpha \colon \mathrm G \longrightarrow q_*( \rI(Y/X) ) \]
	in $\mathrm{Mon}_{\mathbb E_1}^{\mathrm{gp}}( \dSt_{/X} )$, where $\rI(Y/X)$ is considered as a group via the structure introduced in \cref{subsec:inertial_action}.
	A weak $\mathrm G$-banding $\alpha$ is said to be a \emph{$\mathrm G$-banding} if $\alpha$ is an equivalence.
\end{defin}

Let $q \colon Y \to X$ be a morphism of derived stacks and let $\alpha$ be a weak $\mathrm G$-banding on $Y$.
By adjunction, it corresponds to a morphism of group stacks over $Y$
\[ q^* \mathrm G \longrightarrow \rI(Y/X) . \]
Applying the deloop functor, we obtain a canonical map
\[ \varpi_\alpha \colon q^*( \rB_X(\mathrm G) ) \simeq \rB_Y( q^* \mathrm G ) \longrightarrow \rB_Y( \rI(Y/X) ) . \]
Composing with $\mathrm{act} \colon \rB_Y(\rI(Y/X)) \to Y$ we obtain a map
\begin{equation}\label{eq:act}
	\mathrm{act}_\alpha \coloneqq \mathrm{act} \circ \varpi_\alpha \colon q^*(\rB_X(\mathrm G)) \longrightarrow Y .
\end{equation}
Observe that the composition of $\mathrm{act}_\alpha$ with the atlas $u \colon Y \to q^*(\rB_X(\mathrm G))$ is canonically equivalent to the identity of $Y$.
In particular, we have:

\begin{lem} \label{lem:act_t-exact}
	Assume that $\mathrm G$ is flat group stack over $X$.
	Then the functor
	\[ \mathrm{act}_\alpha^* \colon \QCoh(Y) \longrightarrow \QCoh( q^*(\rB_X(\mathrm G)) ) \]
	is flat.
\end{lem}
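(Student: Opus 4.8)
The plan is to deduce the flatness of $\mathrm{act}_\alpha$ from the (tautological) flatness of the identity functor of $\QCoh(Y)$, using the atlas $u \colon Y \to q^*(\rB_X(\mathrm G))$ as a faithfully flat descent cover.

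First I would record that $u$ is faithfully flat. Under the identification $q^*(\rB_X(\mathrm G)) \simeq \rB_Y(q^*\mathrm G)$ recalled just above the statement, $u$ is the canonical atlas, which exhibits $Y$ as a $q^*\mathrm G$-torsor over $\rB_Y(q^*\mathrm G)$. Since $\mathrm G$ is flat over $X$, its base change $q^*\mathrm G$ is flat over $Y$, so $u$ is an fppf cover. It follows that
\[ u^* \colon \QCoh\big(q^*(\rB_X(\mathrm G))\big) \longrightarrow \QCoh(Y) \]
is $t$-exact (being pullback along a flat morphism) and conservative (fppf descent for quasi-coherent sheaves, cf.\ \cite{Gaitsgory_1_affineness, Lurie_SAG}).

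Next I would invoke the identity $\mathrm{act}_\alpha \circ u \simeq \id_Y$ observed right before the statement: for every $\cF \in \QCoh(Y)$ it produces a canonical equivalence $u^*\mathrm{act}_\alpha^*(\cF) \simeq \cF$. Since $\mathrm{act}_\alpha^*$ is symmetric monoidal and preserves colimits, it already carries $\QCoh(Y)_{\ge 0}$ into $\QCoh(q^*(\rB_X(\mathrm G)))_{\ge 0}$, so it remains only to check that it preserves coconnective objects. If $\cF \in \QCoh(Y)_{\le n}$, then $u^*\mathrm{act}_\alpha^*(\cF) \simeq \cF$ lies in $\QCoh(Y)_{\le n}$; by $t$-exactness of $u^*$ we get $u^*\big(\tau_{\ge n+1}\,\mathrm{act}_\alpha^*(\cF)\big) \simeq \tau_{\ge n+1}\, u^*\mathrm{act}_\alpha^*(\cF) \simeq 0$, and conservativity of $u^*$ then forces $\tau_{\ge n+1}\,\mathrm{act}_\alpha^*(\cF) \simeq 0$, i.e.\ $\mathrm{act}_\alpha^*(\cF) \in \QCoh(q^*(\rB_X(\mathrm G)))_{\le n}$. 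Hence $\mathrm{act}_\alpha^*$ is $t$-exact, which is exactly the assertion that $\mathrm{act}_\alpha$ is flat.

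The only point that is not pure formalism is the faithful flatness of the atlas $u$, which is where the hypothesis that $\mathrm G$ is flat over $X$ enters; granting this, the rest is the standard fact that a $t$-exact conservative functor detects (co)connectivity. It is worth noting that the banding $\alpha$ may be weak: the argument never uses that $\alpha$ is an equivalence, only that $\mathrm{act}_\alpha$ splits the atlas $u$.
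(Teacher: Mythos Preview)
Your argument is correct and follows exactly the same route as the paper: the flatness of $\mathrm G$ makes the atlas $u$ (faithfully) flat, and combined with $\mathrm{act}_\alpha \circ u \simeq \id_Y$ this forces $\mathrm{act}_\alpha^*$ to be $t$-exact. The paper's proof is a two-line sketch of this same idea; your version simply spells out the deduction via conservativity of $u^*$ that the paper leaves implicit.
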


\begin{proof}
	Since $\mathrm G$ is flat over $X$, the induced map $Y \to q^*( \rB_X(\mathrm G) )$ is flat as well.
	Therefore the conclusion simply follows from the fact that $\mathrm{act}_\alpha \circ u$ is the identity of $Y$.
\end{proof}

\begin{eg}\label{eg:act_trivial_gerbe}
	We unravel the construction of $\mathrm{act}_\alpha$ in the special case where $X = \Spec(\mathbb S)$ and $Y = \rB\bbG_m$.
	We have a canonical identification
	\[ \mathrm I(\rB\bbG_m) \simeq \rB\bbG_m \times \bbG_m \ , \]
	where the group structure relative to $\rB\bbG_m$ is simply the one induced by the absolute group structure of $\bbG_m$.
	We have
	\[ q_\ast([\bbG_m / \bbG_m]) \simeq \bbG_m \ , \]
	and the banding $\alpha$ in this case is the identity.
	As a consequence, the map $q^\ast(\bbG_m) \to \rB\bbG_m \times \bbG_m$ is the identity as well.
	Consider now the commutative triangle
	\[ \begin{tikzcd}[column sep = small]
		& \rB\bbG_m \arrow{dl}[swap]{u} \arrow{dr}{\Delta} \\
		\rB_{\rB\bbG_m}(\rB\bbG_m \times \bbG_m) \arrow{rr}{\mu} & & \rB\bbG_m \times \rB\bbG_m \ ,
	\end{tikzcd} \]
	where $u$ is the atlas map and $\Delta$ the diagonal.
	Notice that there is a natural identification $\rB_{\rB\bbG_m}(\rB\bbG_m \times \bbG_m) \simeq \rB\bbG_m \times \rB\bbG_m$.
	Under the equivalence $\QCoh(\rB\bbG_m \times \rB\bbG_m) \simeq \Fun(\Z \times \Z, \Sp)$, let us write for $\cO(p,q)$ for the line bundle on $\rB\bbG_m \times \rB\bbG_m$ concentrated in the bigrade $(p,q)$.
	Then a map $f \colon X \to \rB\bbG_m \times \rB\bbG_m$ is completely determined by the pair of line bundles $(f^\ast(\cO(1,0)), f^\ast(\cO(0,1)))$.
	In these terms, the map $\Delta$ is classified by the pair $(\cO_{\rB\bbG_m}(1), \cO_{\rB\bbG_m}(1))$, and the map $u$ by the pair $(\cO_{\rB\bbG_m}(1), \cO_{\rB\bbG_m})$.
	By construction we have
	\[ \mu^\ast(\cO(1,0)) \simeq \cO(1,0) \ . \]
	On the other hand, if $(p,q) \in \Z \times \Z$ is such that $\mu^\ast(\cO(0,1)) \simeq \cO(p,q)$, then the commutativity of the above diagram forces
	\[ \cO(p) \simeq u^\ast(\cO(p,0)) \otimes u^\ast(\cO(0,q)) \simeq u^\ast(\cO(p,q)) \simeq u^\ast \mu^\ast(\cO(0,1)) \simeq \Delta^\ast(\cO(0,1)) \simeq \cO_{\rB\bbG_m}(1) \ , \]
	and therefore $p = 1$.
	On the other hand, write $\mu_\bullet \colon \Cech(u) \to \Cech(\Delta)$ for the map of simplicial objects induced by $\mu$.
	\Cref{lem:computing_mu} guarantees that the map
	\[ \mu_1 \colon \rB\bbG_m \times \bbG_m \longrightarrow \rB\bbG_m \times \bbG_m \]
	is homotopic to the identity.
	On the other hand, unraveling the definitions we see that if $\mu^\ast(\cO(0,1)) \simeq \cO(1,q)$, then $\mu_1$ is induced by the homomorphism of groups $(-)^q \colon \bbG_m \to \bbG_m$.
	Thus, $q = 1$ as well.
	
	\medskip
	
	In conclusion, $\mu^\ast(\cO(0,1)) \simeq \cO(1,1)$.
	It follows that $\mu$ actually coincides with the multiplication map $m \colon \rB\bbG_m \times \rB\bbG_m \to \rB\bbG_m$.
\end{eg}

\subsection{GAGA theorem for twisted sheaves} \label{subsec:GAGA_G-gerbes}

We start with a brief review of the notion of $\mathbb G_m$-gerbes and twisted sheaves (see \cite{Giraud_Cohomologie_1971,Lieblich_Twisted_period_index,Bergh_Schnurer_Gerbes} for more thorough introductions to this language).

\begin{defin}\label{def:gerbe}
	Let $X$ be a derived stack.
	A $\bbG_m$-gerbe on $X$ is a pair $(\mathfrak A, \alpha)$, where:
	\begin{enumerate}\itemsep=0.2cm
		\item $\pi \colon \mathfrak A \to X$ is a derived stack over $X$ such that both the structural map $\pi$ and the diagonal $\delta_\pi \colon \mathfrak A \to \mathfrak A \times_X \mathfrak A$ are epimorphism;
		\item a $\bbG_m$-banding $\alpha \colon X \times \bbG_m \to \pi_* \rI(\mathfrak A / X)$ (see \cref{defin:banding}) .
	\end{enumerate}
\end{defin}

\begin{construction}\label{construction:d_homogeneous_components}
	Let $(\mathfrak A, \alpha)$ be a $\bbG_m$-gerbe over $X$ and fix an integer $d \in \Z$.
	Let
	\[ p \colon \rB\bbG_m \times \mathfrak A \longrightarrow \mathfrak A \qquad \text{and} \qquad u \colon \mathfrak A \longrightarrow \rB\bbG_m \times \mathfrak A \]
	for the canonical projection and the atlas map, respectively.
	We define the functor
	\[ \delta_d \colon \QCoh(\mathfrak A) \longrightarrow \QCoh(\mathfrak A) \]
	by setting
	\[ \delta_d(\cF) \coloneqq u^\ast \big(p^\ast \big(p_\ast(\mathrm{act}_\alpha^\ast(\cF)(-d)\big) (d) \big) \ , \]
	where $\mathrm{act}_\alpha$ is the map \eqref{eq:act}.
	Since $\mathrm{act}_\alpha \circ u \simeq \id_{\mathfrak A}$, the unit of the adjunction $p^\ast \dashv p_\ast$ induces a canonical natural transformation
	\[ j_d \colon \delta_d \longrightarrow \id_{\QCoh(\mathfrak A)} \ . \]
\end{construction}

\begin{defin}
	Let $X$ be a derived stack and let $(\mathfrak A, \alpha)$ be a $\bbG_m$-gerbe on $X$.
	Let $d \in \Z$ be an integer.
	We say that a quasi-coherent sheaf $\cF \in \QCoh(\mathfrak A)$ is \emph{$d$-homogeneous} if the canonical map $j_{d,\cF} \colon \delta_d(\cF) \to \cF$ is an equivalence.
	We let $\QCoh_d(\mathfrak A)$ denote the full subcategory of $\QCoh(\mathfrak A)$ spanned by $d$-homogeneous sheaves.
\end{defin}

\begin{notation}
	For a $\bbG_m$-gerbe $(\mathfrak A, \alpha)$ on a derived stack $X$ and an integer $d \in \Z$, we set $\Perf_d(\mathfrak A) \coloneqq \Perf(\mathfrak A) \cap \QCoh_d(\mathfrak A)$.
\end{notation}

\begin{lem}\label{lem:d_homogeneous_basic_facts}
	Let $(\mathfrak A, \alpha)$ be a $\bbG_m$-gerbe over a derived stack $X$.
	Fix an integer $d \in \Z$.
	Then:
	\begin{enumerate}\itemsep=0.2cm
		\item The functor $\delta_d$ is $t$-exact.
		In particular a quasi-coherent sheaf $\cF \in \QCoh(\mathfrak A)$ is $d$-homogeneous if and only if $\pi_i(\cF)$ is $d$-homogeneous for every $i \in \Z$.
		
		\item If
		\[ X \simeq \colim_{i \in I} X_i \]
		inside $\dSt$, then
		\[ \QCoh_d(\mathfrak A) \simeq \lim_{i \in I\op} \QCoh_d( X_i \times_X \mathfrak A ) \ . \]
		
		\item If $(\mathfrak A,\alpha)$ is the trivial $\bbG_m$-gerbe $\rB\bbG_m \times X$, then a quasi-coherent sheaf $\cF \in \QCoh(\rB\bbG_m \times X)$ is $d$-homogeneous if and only if it is of the form $\cG(d)$ for some $\cG \in \QCoh(X)$.
		In particular, the functor $\mathrm T_{X,d} \colon \QCoh(X) \to \QCoh(\rB\bbG_m \times X)$ restricts to an equivalence
		\[ \QCoh(X) \simeq \QCoh_d(\rB\bbG_m \times X) \ . \]
	\end{enumerate}
\end{lem}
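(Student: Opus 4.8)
The three assertions will be proved in order, (1) serving as a reduction device for (2) and (3).

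For (1), the plan is to check that \emph{every} functor entering the definition of $\delta_d$ in \cref{construction:d_homogeneous_components} is $t$-exact. Indeed, $\mathrm{act}_\alpha^{\ast}$ is flat by \cref{lem:act_t-exact} applied with $\mathrm{G}=\bbG_m$ (a flat group scheme over $X$), hence $t$-exact; the twists by the line bundles $\cO(\pm d)$ are $t$-exact; the pullbacks $p^{\ast}$ and $u^{\ast}$ are $t$-exact because $p$ is the projection $\rB\bbG_m\times\fA\to\fA$ and $u$ is the flat atlas; and $p_{\ast}$, being simultaneously a left and a right adjoint of the $t$-exact functor $p^{\ast}=\mathrm{T}_{\fA,0}$ (see \cref{lem:graded_spectra_standard_facts}), is both left and right $t$-exact, hence $t$-exact. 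Therefore $\delta_d$ is $t$-exact, so $j_d\colon\delta_d\to\id_{\QCoh(\fA)}$ is a natural transformation of $t$-exact endofunctors. As the $t$-structure on $\QCoh(\fA)$ is non-degenerate, $j_{d,\cF}$ is an equivalence if and only if $\pi_i(j_{d,\cF})$ is an isomorphism for every $i\in\Z$, and $t$-exactness identifies $\pi_i(j_{d,\cF})$ with the component of the induced map on hearts $j_d^{\heartsuit}$ at $\pi_i(\cF)$; this is precisely the stated criterion.

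For (2), I would use that $\dSt$ is an $\infty$-topos, so colimits are universal: from $X\simeq\colim_{i\in I}X_i$ we get $\fA\simeq\colim_{i\in I}\bigl(X_i\times_X\fA\bigr)$, and since $\QCoh$ sends colimits of stacks to limits of $\infty$-categories, $\QCoh(\fA)\simeq\lim_{i\in I\op}\QCoh(X_i\times_X\fA)$. The crux is then that the formation of $\delta_d$ and of the transformation $j_d$ is compatible with the pullback functors along $X_i\times_X\fA\to\fA$: the maps $\mathrm{act}_\alpha$, $p$, $u$ and the twists are all obtained by base change from $X$, and the only pushforward involved, $p_{\ast}$, satisfies base change because under $\QCoh(\rB\bbG_m\times-)\simeq\Fun(\Z,\QCoh(-))$ (the corollary to \cref{prop:box_product_derived_stacks}) it is evaluation at weight $0$, which manifestly commutes with pullback in the $\fA$-direction. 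Consequently, for $\cF\in\QCoh(\fA)$ corresponding to a compatible system $(\cF_i)_i$, the morphism $j_{d,\cF}$ corresponds to the system $(j_{d,\cF_i})_i$; since equivalences in a limit of $\infty$-categories are detected componentwise, $\cF$ is $d$-homogeneous if and only if each $\cF_i$ is, which is the asserted identification of full subcategories.

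For (3), the idea is to compute $\delta_d$ explicitly in the trivial case, reducing to \cref{eg:act_trivial_gerbe}, where it is shown that for $\fA=\rB\bbG_m\times X$ the map $\mathrm{act}_\alpha$ is the multiplication $m$ of $\rB\bbG_m$ (along $X$). Plugging this into \cref{construction:d_homogeneous_components} and computing weight by weight under $\QCoh(\rB\bbG_m\times X)\simeq\Fun(\Z,\QCoh(X))$ — using $u^{\ast}p^{\ast}=\id$, the explicit form of $u^{\ast}$ recorded in \cref{eg:act_trivial_gerbe}, and the description of $p_{\ast}$ as evaluation at weight $0$ — one finds that $\delta_d$ is canonically equivalent to $\mathrm{T}_{X,d}\circ\mathrm{wt}_{X,d}$, with $j_d$ the counit of the adjunction $\mathrm{T}_{X,d}\dashv\mathrm{wt}_{X,d}$, i.e.\ the inclusion of the weight-$d$ summand. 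Since $\mathrm{T}_{X,d}$ is fully faithful (\cref{lem:graded_spectra_standard_facts}), this counit is an equivalence at $\cF$ exactly when $\cF$ lies in the essential image of $\mathrm{T}_{X,d}$, that is, when $\cF\simeq\cG(d)$ for some $\cG\in\QCoh(X)$; full faithfulness of $\mathrm{T}_{X,d}$ then upgrades the corestriction $\QCoh(X)\to\QCoh_d(\rB\bbG_m\times X)$ to the asserted equivalence.

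The only genuinely delicate point is this last weight-by-weight computation: one must keep careful track of which of the two copies of $\rB\bbG_m$ in $\rB\bbG_m\times(\rB\bbG_m\times X)$ is being acted on, of the direction of each of the twists $(-d)$ and $(d)$, and — crucially — match the resulting natural transformation not merely with some abstract equivalence $\delta_d\simeq\mathrm{T}_{X,d}\mathrm{wt}_{X,d}$ but with the counit itself, so that ``$j_{d,\cF}$ is an equivalence'' translates into membership in the essential image. Everything else is a formal consequence of the $t$-exactness and base-change compatibility established in (1) and (2).
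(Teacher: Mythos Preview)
Your proposal is correct and follows essentially the same route as the paper. The only cosmetic differences are that the paper first simplifies $\delta_d(\cF)\simeq p_\ast(\mathrm{act}_\alpha^\ast(\cF)(-d))$ (using $u^\ast\circ(-)(d)\circ p^\ast\simeq\id$) before arguing $t$-exactness, and for (3) reduces to $X=\Spec(\bbS)$ by $\QCoh(X)$-linearity of $\delta_d$ rather than working over general $X$; the actual computations and the key inputs (\cref{lem:act_t-exact}, \cref{lem:graded_spectra_standard_facts}, \cref{eg:act_trivial_gerbe}) are identical.
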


\begin{proof}
	For point (1), it is enough to observe that
	\[ \delta_d(\cF) \simeq p_\ast(\mathrm{act}_\alpha^\ast(\cF)(-d)) \ . \]
	Now, $\mathrm{act}_\alpha^\ast$ is $t$-exact thanks to \cref{lem:act_t-exact}, while $p_\ast \colon \QCoh(\rB\bbG_m \times \mathfrak A)$ is $t$-exact because it is identified with evaluation at $0 \in \Z$ under the equivalence $\QCoh(\rB\bbG_m \times \mathfrak A) \simeq \Fun(\Z, \QCoh(\mathfrak A))$ (see \cref{lem:graded_spectra_standard_facts}).
	
	\medskip
	
	For point (2), observe first that since $\dSt$ is an $\infty$-topos the canonical map
	\[ \colim_{i \in I} X_i \times_X \mathfrak A \longrightarrow \mathfrak A \]
	is an equivalence.
	The conclusion now follows from the observation that if $f_i \colon \mathfrak A_i \coloneqq X_i \times_X \mathfrak A \to \mathfrak A$ is the canonical map, then the induced natural transformation
	\[ f_i^\ast \circ \delta_{d, \mathfrak A} \longrightarrow \delta_{d, \mathfrak A_i} \circ f_i^\ast \]
	is an equivalence.
	
	\medskip
	
	We are left to check point (3).
	Since $\delta_d$ is $\QCoh(X)$-linear, we can simply assume $X = \Spec(\bbS)$.
	In virtue of \cref{eg:act_trivial_gerbe}, we see that $\mathrm{act}_\alpha$ coincides in this case with the multiplication map
	\[ m \colon \rB\bbG_m \times \rB\bbG_m \longrightarrow \rB\bbG_m \ . \]
	Fix $\cF \in \QCoh(\rB\bbG_m)$ and for $d \in \Z$ write
	\[ \cG_d \coloneqq \mathrm{wt}_d(\cF) \in \Sp \ , \]
	so that the canonical map
	\[ \bigoplus_{k \in \Z} \cG_k(k) \longrightarrow \cF \]
	is an equivalence.
	It follows that
	\[ m^\ast(\cF) \simeq \bigoplus_{k \in \Z} \cG_k(k,k) \ . \]
	Therefore,
	\[ \delta_d(\cF) \simeq \bigoplus_{k \in \Z} p_\ast( \cG_d(k,k)(-d,0) ) \simeq \bigoplus_{k \in \Z} \cG_k \otimes p_\ast( \cO(k-d,k) ) \ , \]
	where $p \colon \rB\bbG_m \times \rB\bbG_m \to \rB\bbG_m$ is the second projection.
	Now, $p_\ast(\cO(k-d,k))$ is non-zero if and only if $k = d$.
	Besides, in this case we have $\cO(0,d) \simeq p^\ast(\cO(d))$, so the full faithfulness of $p^\ast$ (guaranteed by \cref{lem:graded_spectra_standard_facts}) immediately yields
	\[ p_\ast(\cO(0,d)) \simeq \cO(d) \ . \]
	Thus,
	\[ \delta_d(F) \simeq \cG_d(d) \simeq \mathrm{T}_d(\mathrm{wt}_d(\cF)) \ . \]
	The conclusion follows.
\end{proof}

\begin{cor}\label{lem:homogeneous_component_Azumaya}
	Let $X$ be a geometric derived stack and let $\alpha \colon \mathfrak A \to X$ be a $\bbG_m$-gerbe.
	Then for every integer $d \in \Z$, $\QCoh_d(\mathfrak A)$ is invertible in $\PrLomega_X$.
\end{cor}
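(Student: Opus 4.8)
The plan is to reduce to the case of the trivial $\bbG_m$-gerbe by faithfully flat descent. If $\mathfrak A \simeq \rB\bbG_m \times X$ is trivial, then \cref{lem:d_homogeneous_basic_facts}-(3) identifies $\QCoh_d(\mathfrak A)$ with $\QCoh(X)$ (via $\mathrm T_{X,d}$), which is the monoidal unit of $\PrLomega_X$ and so is invertible; here one uses that $X$ is geometric to ensure $\QCoh(X)$ is compactly generated and rigid, so that $\PrLomega_X$ together with its symmetric monoidal structure is available (cf.\ \cref{prop:compactly_generated_algebras}).

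For the general case, I would first choose a hypercover $X_\bullet \to X$ by affine schemes over which $\mathfrak A$ becomes trivial. This is possible because the banding puts the class of $\mathfrak A$ in $\rH^2\et(X;\bbG_m)$, which is trivialized étale-locally, and because $X$ is geometric, so one can refine a smooth atlas of $X$ by such an étale cover. Since $X$ is geometric, the assignment $Y \mapsto \PrLomega_Y$ satisfies descent along $X_\bullet \to X$ --- this is the stacky analogue of \cref{thm:catQCoh_descent}, obtained by combining $1$-affineness \cite{Gaitsgory_1_affineness} with To\"en's gluing of compact generators \cite[Theorem 0.2]{Toen_Azumaya} --- so that $\PrLomega_X \simeq \lim_{[n] \in \mathbf \Delta} \PrLomega_{X_n}$ as symmetric monoidal $\infty$-categories. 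Applying \cref{lem:d_homogeneous_basic_facts}-(2) to the realization $X \simeq \colim_{[n]} X_n$, under this equivalence the object $\QCoh_d(\mathfrak A)$ corresponds to the compatible system $(\QCoh_d(\mathfrak A_n))_n$, where $\mathfrak A_n \coloneqq X_n \times_X \mathfrak A$.

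Finally, each $\QCoh_d(\mathfrak A_n) \simeq \QCoh(X_n)$ is invertible in $\PrLomega_{X_n}$ by the trivial case. The assignment taking a symmetric monoidal $\infty$-category to its space of invertible objects preserves limits, so an object of a limit $\lim_i \cC_i$ of symmetric monoidal $\infty$-categories is invertible as soon as each of its components is. Applying this to $\PrLomega_X \simeq \lim_{[n]} \PrLomega_{X_n}$ shows that $\QCoh_d(\mathfrak A)$ is invertible in $\PrLomega_X$, as desired.

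I expect the main obstacle to be the descent statement for $\PrLomega_Y$ along the trivializing cover when $X$ is a general geometric stack rather than a scheme: over a scheme it is already available from \cref{thm:catQCoh_descent}, but in the stacky case it must be assembled from $1$-affineness together with the gluing of compact generators, and one must also check that this descent presentation of $\PrLomega$ is compatible with the limit description of $\QCoh_d$ provided by \cref{lem:d_homogeneous_basic_facts}-(2) (in particular that the latter limit may be computed in $\PrLomega_X$). Everything else is formal.
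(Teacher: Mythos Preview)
Your strategy is the same as the paper's: reduce to the trivial gerbe via descent, then invoke \cref{lem:d_homogeneous_basic_facts}-(3). The difference lies precisely in the obstacle you flagged at the end. You try to run descent for $\PrLomega$ along a smooth (or étale) trivializing hypercover, but the paper only establishes \emph{Zariski} descent for $\PrLomega$ (\cref{thm:catQCoh_descent}-(2)); promoting this to étale or smooth descent over a general geometric stack is exactly the non-formal step you worry about, and the paper does not supply it.

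The paper sidesteps this entirely by invoking \cref{cor:etale_descent_Azumaya} instead: an object of $\PrLomega_X$ is invertible there if and only if it is invertible in $\PrLcg_X$ (the inverse and the witnessing equivalences live in $\PrLomega$ automatically), and $X \mapsto \PrLcg_X$ \emph{does} satisfy étale descent (\cref{thm:catQCoh_descent}-(1), via \cite[D.5.3.1]{Lurie_SAG}). Thus invertibility can be tested smooth-locally without ever needing the full limit description of $\PrLomega_X$. With this observation in hand, your argument goes through cleanly: base-change to a cover where $\mathfrak A$ trivializes, use \cref{lem:d_homogeneous_basic_facts}-(2) and (3), and conclude. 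So your proof is correct modulo replacing ``descent for $\PrLomega$'' by ``descent for invertible objects via $\PrLcg$'', which is both easier and already available in the paper.
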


\begin{proof}
	It follows from \cref{cor:etale_descent_Azumaya} that we can test invertibility in $\PrLomega_X$ smooth-locally on $X$.
	In other words, we can reduce ourselves to the case where $\mathfrak A$ is the trivial $\bbG_m$-gerbe.
	In this case, the statement follows directly from \cref{lem:d_homogeneous_basic_facts}-(3).
\end{proof}

As a consequence of \cref{lem:d_homogeneous_basic_facts}-(1) and of \cite[Lemma 5.2]{Bergh_Schnurer_Gerbes}, we obtain:

\begin{cor}
	Let $X$ be an underived scheme.
	Then the underlying triangulated category of $\QCoh_d(\mathfrak A)$ coincides with the triangulated category introduced in \cite{Bergh_Schnurer_Gerbes}.
\end{cor}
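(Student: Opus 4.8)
The plan is to reduce the comparison to the level of abelian categories, where \cite[Lemma 5.2]{Bergh_Schnurer_Gerbes} provides the required identification, and then to bootstrap back up to the triangulated level using the $t$-exactness established in \cref{lem:d_homogeneous_basic_facts}-(1).

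First I would recall that, since $X$ is an underived (classical) scheme, the gerbe $\mathfrak A$ is a classical algebraic stack and the heart $\QCoh(\mathfrak A)^\heartsuit$ of the standard $t$-structure on $\QCoh(\mathfrak A)$ is the ordinary abelian category of quasi-coherent sheaves on $\mathfrak A$. By \cref{lem:d_homogeneous_basic_facts}-(1) the functor $\delta_d$ is $t$-exact, so $\QCoh_d(\mathfrak A)$ is stable under truncations, and an object $\cF$ lies in $\QCoh_d(\mathfrak A)$ if and only if each $\pi_i(\cF)$ does; in particular
\[ \QCoh_d(\mathfrak A)^\heartsuit \coloneqq \QCoh_d(\mathfrak A) \cap \QCoh(\mathfrak A)^\heartsuit \]
is a weak Serre subcategory of $\QCoh(\mathfrak A)^\heartsuit$. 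Unwinding \cref{construction:d_homogeneous_components}, a classical quasi-coherent sheaf $\cF$ on $\mathfrak A$ is $d$-homogeneous precisely when the comparison map $\mathrm{act}_\alpha^\ast(\cF) \to \pi_{\mathfrak A}^\ast(\cF)(d)$ coming from the unit of $p^\ast \dashv p_\ast$ is an isomorphism, i.e.\ when the $\bbG_m$-action on $\cF$ induced by the banding $\alpha$ is homogeneous of weight $d$. This is exactly the defining condition for the abelian category of weight-$d$ (equivalently, $d$-twisted) quasi-coherent sheaves on $\mathfrak A$ used in \cite{Bergh_Schnurer_Gerbes}, and the translation between the two formulations is the content of \cite[Lemma 5.2]{Bergh_Schnurer_Gerbes}; hence $\QCoh_d(\mathfrak A)^\heartsuit$ is equivalent to their abelian category.

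Next I would promote this to the derived level. By \cref{lem:homogeneous_component_Azumaya} the subcategory $\QCoh_d(\mathfrak A) \subseteq \PrLomega_X$ is invertible; in particular it is a direct summand of $\QCoh(\mathfrak A)$ as a stable presentable $\QCoh(X)$-linear $\infty$-category, the inclusion being realised by the idempotent endofunctor $\delta_d$, which is the projector onto the weight-$d$ part of the weight decomposition of $\QCoh(\mathfrak A)$. Being a $t$-exact, colimit-preserving full subcategory closed under truncations, $\QCoh_d(\mathfrak A)$ inherits an accessible $t$-structure with heart $\QCoh_d(\mathfrak A)^\heartsuit$, and since $X$ is classical and qcqs this $t$-structure is both left and right complete; standard comparison results (Bondal--Van den Bergh, together with the identification of $\QCoh(\mathfrak A)$ with the unbounded derived category of quasi-coherent sheaves) then identify the underlying triangulated category of $\QCoh_d(\mathfrak A)$ with the unbounded derived category of $\QCoh_d(\mathfrak A)^\heartsuit$. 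Combined with the previous paragraph, this is exactly the triangulated category of \cite{Bergh_Schnurer_Gerbes}.

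The main obstacle I expect is the faithful matching of the two notions of ``weight-$d$'' sheaf at the abelian level: our condition is packaged through the pull--push formula defining $\delta_d$ and the map $\mathrm{act}_\alpha$ built from the banding, whereas \cite{Bergh_Schnurer_Gerbes} phrase it in terms of an explicit $\bbG_m$-comodule structure; checking that \cref{eg:act_trivial_gerbe}-type computations globalise so that \cite[Lemma 5.2]{Bergh_Schnurer_Gerbes} applies verbatim is where all the real work lies. A secondary point to be careful about is which flavour of triangulated category \cite{Bergh_Schnurer_Gerbes} actually use (bounded vs.\ unbounded, quasi-coherent sheaves vs.\ all $\cO_{\mathfrak A}$-modules), so that the comparison is carried out in the correct category.
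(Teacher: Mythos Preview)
Your proposal is correct and rests on exactly the two ingredients the paper invokes, namely the $t$-exactness of $\delta_d$ from \cref{lem:d_homogeneous_basic_facts}-(1) and \cite[Lemma~5.2]{Bergh_Schnurer_Gerbes}, but you take a longer route than necessary. In \cite{Bergh_Schnurer_Gerbes} the triangulated category in question is defined as the full subcategory of $\mathrm D_{\mathrm{qc}}(\mathfrak A)$ spanned by complexes whose cohomology sheaves are $d$-homogeneous in their abelian sense. Since for a classical algebraic stack the homotopy category of $\QCoh(\mathfrak A)$ is $\mathrm D_{\mathrm{qc}}(\mathfrak A)$, and \cref{lem:d_homogeneous_basic_facts}-(1) says that $\cF \in \QCoh_d(\mathfrak A)$ if and only if each $\pi_i(\cF)$ is $d$-homogeneous, the two categories are full subcategories of the \emph{same} ambient triangulated category cut out by a cohomological condition; one only needs to match the two conditions on the heart, and that is precisely \cite[Lemma~5.2]{Bergh_Schnurer_Gerbes}. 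Your detour through proving $\QCoh_d(\mathfrak A) \simeq \mathrm D(\QCoh_d(\mathfrak A)^\heartsuit)$ is therefore not needed, and in fact introduces an extra claim (that the inclusion of the heart induces an equivalence with the unbounded derived category) whose justification via completeness and Bondal--Van~den~Bergh is more delicate than the direct comparison.
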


\begin{defin}
	Let $X$ be a derived stack and let $\alpha \in \rH^2_{\mathrm{\'et}}(X;\bbG_m)$.
	Let $(\mathfrak A, \alpha)$ be the associated $\bbG_m$-gerbe on $X$.
	Then the $\infty$-category of $\alpha$-twisted quasi-coherent sheaves on $X$ is
	\[ \QCoh_\alpha(X) \coloneqq \QCoh_{1}(\mathfrak A) . \]
\end{defin}

The following result has been obtained by M.\ Lieblich at the level of abelian categories \cite{Lieblich_Twisted_period_index}, and it has recently been extended to the derived categories by D.\ Bergh and O.\ Schn\"urer \cite{Bergh_Schnurer_Gerbes} for underived schemes.
We are now in position to generalize to the much vaster class of geometric derived stacks in the sense of Simpson \cite{Simpson_Algebraic_1996}.
We refer to \cite[Definition 2.8]{Porta_Yu_Higher_analytic_stacks_2014} for the precise inductive definition, or equivalently, to \cite[\S2.2.3]{HAG-II}.

\begin{rem}\label{rem:geometric_stacks}
	Without repeating the inductive definition of geometric derived stack here, let us recall to the reader that this class includes: derived schemes, derived algebraic spaces, derived (higher) Deligne-Mumford stacks, derived (higher) Artin stacks. The key difference between geometric derived stacks and Artin stacks lies in the very weak conditions on the diagonal that are imposed in the former; for instance, we allow derived stacks $X$ admitting a smooth cover $\{U_\alpha \to X\}_{\alpha \in I}$, where each $U_\alpha$ is a derived affine such that each fiber product $U_\alpha \times_X U_\beta$ are (not necessarily separated) derived Artin stacks (i.e.\ each $U_\alpha \times_X U_\beta$ has itself a smooth atlas and its diagonal is representable by -- not necessarily separated -- algebraic spaces).
\end{rem}

\begin{thm}[Lieblich, Bergh-Schn\"urer] \label{thm:qcoh_gerbe}
	Let $X$ be a geometric derived stack and let $\mathfrak A$ be a $\bbG_m$-gerbe on $X$.
	Then:
	\begin{enumerate}\itemsep=0.2cm
		\item Let $\alpha \in \rH^2_{\mathrm{\'et}}(X;\bbG_m)$ and let $(\mathfrak A, \alpha)$ be the associated $\bbG_m$-gerbe on $X$.
		Then pullback along the structural map $\pi \colon \mathfrak A \to X$ induces an equivalence
		\[ \pi^* \colon \QCoh(X) \longrightarrow \QCoh_{0}(\mathfrak A) . \]
		
		\item The $d$-homogeneous component functors $\delta_d$ introduced in \cref{construction:d_homogeneous_components} induce an equivalence
		\[ \mathrm D \colon \QCoh(\mathfrak A) \longrightarrow \prod_{d \in \Z} \QCoh_d(\mathfrak A) . \]
		
		\item The decomposition functor $\mathrm D$ of the previous point induces a fully faithful functor
		\[ \Perf(\mathfrak A) \longrightarrow \prod_{d \in \Z} \Perf_d(\mathfrak A) \ , \]
		whose essential image consists of those elements $(\cF_d)_{d \in \Z}$ such that for every map $f \colon T \to X$ from an affine derived scheme $T$, one has $f^\ast(F_d) = 0$ for all but a finite number of integers.
		
	\end{enumerate}
\end{thm}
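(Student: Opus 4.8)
The plan is to reduce assertions (1) and (2) to the case of the trivial $\bbG_m$-gerbe by flat descent on $X$, where they become immediate consequences of \cref{lem:d_homogeneous_basic_facts}-(3) and of the description $\QCoh(\rB\bbG_m \times X) \simeq \Fun(\Z, \QCoh(X))$ extracted from \cref{thm:graded_spectra} and \cref{prop:box_product_derived_stacks}; assertion (3) will then follow from (2) together with an explicit analysis of which families of homogeneous perfect complexes reassemble into a perfect complex on $\mathfrak A$.

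For (1) and (2), I would first check directly from \cref{construction:d_homogeneous_components} that $\pi^\ast$ factors through $\QCoh_0(\mathfrak A)$ (using $\pi \circ \mathrm{act}_\alpha \simeq \pi \circ p$ together with $p \circ u \simeq \id$ and the projection formula for $p$) and that $\delta_d$ takes values in $d$-homogeneous sheaves, so that $\mathrm D = (\delta_d)_{d \in \Z}$ lands in $\prod_{d \in \Z}\QCoh_d(\mathfrak A)$; both facts are local on $X$ and visible on the trivial gerbe. Next I would observe that the source and target of each comparison functor are sheaves for the flat topology on $X$: $\QCoh$ satisfies flat descent, $\QCoh_d(\mathfrak A)$ satisfies descent by \cref{lem:d_homogeneous_basic_facts}-(2) applied to the \v{C}ech nerve of a cover, and an arbitrary product of sheaves is again a sheaf since products are limits and limits commute with limits. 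Since every $\bbG_m$-gerbe becomes trivial after pullback along a suitable cover of $X$, we may then assume $\mathfrak A \simeq \rB\bbG_m \times X$; in that case $\pi = \pi_X$ and $\mathrm T_{X,0} = \pi_X^\ast$, so (1) is exactly the $d = 0$ instance of \cref{lem:d_homogeneous_basic_facts}-(3), while (2) becomes the composite equivalence $\QCoh(\rB\bbG_m \times X) \simeq \Fun(\Z, \QCoh(X)) \simeq \prod_{d \in \Z}\QCoh(X) \simeq \prod_{d \in \Z}\QCoh_d(\rB\bbG_m \times X)$, the middle identification using that $\Z$ is discrete and the last one being \cref{lem:d_homogeneous_basic_facts}-(3); one checks this composite agrees with $\mathrm D$ using the formula $\delta_d(\cF) \simeq (\mathrm{wt}_d \cF)(d)$ established in the proof of \cref{lem:d_homogeneous_basic_facts}-(3).

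For (3), the functor $\delta_d$ preserves perfectness — a local statement on $X$, which for the trivial gerbe follows from \cref{lem:graded_spectra_standard_facts} since the functors $\mathrm{wt}_{\mathfrak A,d}$ commute with compact objects — so $\mathrm D$ restricts to a functor $\Perf(\mathfrak A) \to \prod_{d \in \Z}\Perf_d(\mathfrak A)$, which is fully faithful as the restriction of the equivalence of (2) to a full subcategory whose image lies in a full subcategory. For the essential image I would note that, under (2), the inverse of $\mathrm D$ carries a family $(\cF_d)_d$ to the (possibly infinite) direct sum $\bigoplus_d \cF_d$, and that $\delta_e$ commutes with colimits and annihilates $\cF_d$ for $d \neq e$ (orthogonality of homogeneous components, again visible on the trivial gerbe), so $\mathrm D(\bigoplus_d \cF_d) \simeq (\cF_d)_d$; hence $(\cF_d)_d$ lies in the essential image if and only if $\bigoplus_d \cF_d$ is perfect. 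For the ``only if'' direction: $f^\ast$ commutes with $\delta_d$, so $f^\ast \cF_d \simeq \delta_d(f^\ast(\bigoplus_e \cF_e))$ with $f^\ast(\bigoplus_e \cF_e)$ perfect on the gerbe $T \times_X \mathfrak A$ over the quasi-compact base $T$; passing to a cover of $T$ that trivializes the gerbe, this pullback is represented by a dualizable object of a category of $\Z$-graded sheaves, hence has finite weight support, and by flat descent the support is already finite before passing to the cover. For the ``if'' direction: choose a cover of $X$ by affine schemes $U_i$; over each $U_i \times_X \mathfrak A$ the hypothesis makes $\bigoplus_d \cF_d$ a \emph{finite} direct sum of perfect complexes, hence perfect, and since perfectness is local for the flat topology we conclude that $\bigoplus_d \cF_d$ is perfect on $\mathfrak A$.

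I expect the main obstacle to be the analysis underlying (3): one must marry the pointwise ($T$-by-$T$) finiteness condition with the fact that perfectness is a flat-local property, and in particular verify that a perfect complex on a $\bbG_m$-gerbe over a quasi-compact base has \emph{uniformly} finite weight support. Everything else is a bookkeeping reduction, via flat descent, to \cref{lem:d_homogeneous_basic_facts} and \cref{thm:graded_spectra}.
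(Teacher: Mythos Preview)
Your proposal is correct and follows essentially the same route as the paper: reduce (1) and (2) to the trivial gerbe via descent (the paper phrases this as ``smooth-local on $X$'' via \cref{lem:d_homogeneous_basic_facts}-(2), then further reduces to $X=\Spec(\bbS)$ by $\QCoh(X)$-linearity of $\pi^\ast$ and $\delta_d$), and settle (3) by the compactness argument showing a perfect complex on $\rB\bbG_m$ has finitely many nonzero weights. Your write-up of (3) is in fact more detailed than the paper's, which only spells out the finite-weight-support statement over $\Spec(\bbS)$ and leaves the globalization and the converse direction implicit.
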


\begin{proof}
	When $X$ is an underived scheme, statement (1) follows from \cite[Proposition 5.7]{Bergh_Schnurer_Gerbes}, and statement (2) follows from Theorem 5.4 in loc.\ cit.
	We sketch the proof in the general case.
	Thanks to \cref{lem:d_homogeneous_basic_facts}-(2), the three statements are smooth-local relative to $X$.
	We can therefore assume that $X$ is a derived affine scheme and that the gerbe is trivial.
	Since the functors $\pi^\ast$ and $\delta_d$ are $\QCoh(X)$-linear, we can further reduce to the case where $X = \Spec(\bbS)$.
	In this case, (1) follows from \cref{lem:graded_spectra_standard_facts}, and (2) is an immediate consequence of \cref{lem:d_homogeneous_basic_facts}-(3) and of \cref{thm:graded_spectra}.
	We are left checking statement (3).
	Notice that every perfect complex $\cF$ on $\rB\bbG_m$ is compact in $\QCoh(\rB\bbG_m)$.
	Using point (2), we see that the canonical map
	\[ \colim_{d \in \Z} \bigoplus_{-d \leqslant k \leqslant d} \mathrm{wt}_k(\cF)(k) \longrightarrow \cF \]
	is an equivalence.
	Since $\cF$ is compact, it follows that there exists $d \in \Z$ such that $\cF$ is a retract of
	\[ \bigoplus_{-d \leqslant k \leqslant d} \mathrm{wt}_k(\cF)(k) \ . \]
	Thus, $\cF$ has only finitely many nonzero weights.
\end{proof}

\begin{cor} \label{cor:GAGA_gerbes}
	Let $S$ be as at the beginning of \cref{sec:G-gerbes} and let $p \colon X \to S$ be a morphism in $\dSt$, where $X$ is a quasi-compact geometric derived stack.
	Let $\pi \colon \mathfrak A \to X$ be a $\bbG_m$-gerbe.
	Assume that $X$ is categorically proper, i.e.\ that the natural symmetric monoidal functor
	\[ \Perf(X) \longrightarrow \lim_n \Perf(X_n) \]
	is an equivalence.
	Then $\mathfrak A$ is categorically proper  as well, i.e.\ the canonical functor
	\[ \Perf(\mathfrak A) \longrightarrow \lim_n \Perf(\mathfrak A_n) \]
	is also a symmetric monoidal equivalence of stable symmetric monoidal $\infty$-categories.
\end{cor}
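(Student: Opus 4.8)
The plan is to reduce the statement for the gerbe $\mathfrak A$ to the corresponding statement for each homogeneous component $\QCoh_d(\mathfrak A)$, using the character decomposition of \cref{thm:qcoh_gerbe}, and then to settle each component by appealing to the categorical formal GAGA theorem \cref{thm:GAGA_Morita}. The key point making this work is \cref{lem:homogeneous_component_Azumaya}: each $\QCoh_d(\mathfrak A)$ is invertible — in particular smooth and proper — over $X$, so \cref{thm:GAGA_Morita} applies to it verbatim.

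First I would check that the homogeneous components are compatible with base change along $X_n \to X$. Since $S_n \to S$ is affine, so is $X_n \to X$, whence a base-change equivalence $\QCoh(\mathfrak A_n) \simeq \QCoh(\mathfrak A)\otimes_{\QCoh(X)}\QCoh(X_n)$; the $d$-homogeneous projector $\delta_d$ of \cref{construction:d_homogeneous_components} is $\QCoh(X)$-linear, colimit-preserving, exhibits $\QCoh_d(\mathfrak A)$ as a retract of $\QCoh(\mathfrak A)$, and commutes with this pullback (the natural equivalence $f_n^\ast\delta_{d,\mathfrak A}\simeq\delta_{d,\mathfrak A_n}f_n^\ast$ is exactly what is checked in the proof of \cref{lem:d_homogeneous_basic_facts}-(2)). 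Since $-\otimes_{\QCoh(X)}\QCoh(X_n)$ preserves colimits and hence retracts, this yields $\QCoh_d(\mathfrak A)\otimes_{\QCoh(X)}\QCoh(X_n)\simeq\QCoh_d(\mathfrak A_n)$. Now \cref{thm:GAGA_Morita}, applied to the dualizable (indeed invertible) object $\QCoh_d(\mathfrak A)\in\PrLomega_X$, gives that
\[ \QCoh_d(\mathfrak A)\longrightarrow\lim_n\QCoh_d(\mathfrak A)\otimes_{\QCoh(X)}\QCoh(X_n)\simeq\lim_n\QCoh_d(\mathfrak A_n) \]
is an equivalence, the limit taken in $\PrLomega$; since $\Ind\colon\Cat_\infty^{\mathrm{idem}}\to\PrLomega$ is an equivalence, taking compact objects commutes with this limit, and — as $\QCoh_d(\mathfrak A)\hookrightarrow\QCoh(\mathfrak A)$ preserves and detects compactness, and compacts of $\QCoh(\mathfrak A)$ are its perfect complexes — I get $\Perf_d(\mathfrak A)\xrightarrow{\ \sim\ }\lim_n\Perf_d(\mathfrak A_n)$, compatibly with the decomposition functors $\mathrm D$ of \cref{thm:qcoh_gerbe}-(3).

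The remaining step is to reassemble. By \cref{thm:qcoh_gerbe}-(3), $\mathrm D$ realizes $\Perf(\mathfrak A)$ as the subcategory of $\prod_{d\in\Z}\Perf_d(\mathfrak A)$ of families that become zero outside a finite set of weights after pullback to any affine over $X$; since $X$ is quasi-compact and pullback along a flat atlas is conservative, this is just the subcategory of \emph{finitely supported} families (and likewise over each $X_n$). Using the previous step and $\lim_n\prod_d\simeq\prod_d\lim_n$, the comparison functor sits in a square whose bottom row $\prod_d\Perf_d(\mathfrak A)\simeq\prod_d\lim_n\Perf_d(\mathfrak A_n)\simeq\lim_n\prod_d\Perf_d(\mathfrak A_n)$ is an equivalence and whose vertical legs are the two copies of $\mathrm D$. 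I then need the two subcategories cut out by $\mathrm D$ to match: here I would use that $X_{n-1}\hookrightarrow X_n$ is a nilpotent thickening and that a perfect complex restricting to zero along a nilpotent thickening vanishes, so that for a coherent family $(\cF^{(n)})_n$ the support of the $d$-components $(\cF^{(n)}_d)_d$ is contained in the finite support of $\cF^{(0)}$ for every $n$; hence $\mathrm D$ carries $\lim_n\Perf(\mathfrak A_n)$ exactly onto the finitely supported families, i.e.\ onto the image of $\Perf(\mathfrak A)$. Symmetric monoidality is automatic since the pullbacks are symmetric monoidal and $\CAlg(\Cat_\infty)\to\Cat_\infty$ creates limits.

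I expect the main obstacle to be precisely this matching of essential images across the limit: \cref{thm:GAGA_Morita} is the one genuinely nonformal input and it does the heavy lifting at the level of $\QCoh_d$, but the finite-support condition characterizing $\Perf(\mathfrak A)$ inside $\prod_d\Perf_d(\mathfrak A)$ holds ``on the nose'' only for a single thickening, and propagating it to $\lim_n$ is where the rigidity of perfect complexes under nilpotent thickenings has to be invoked. A secondary point to be careful about is the identification $\QCoh_d(\mathfrak A)^\omega=\Perf_d(\mathfrak A)$ and the base-change equivalence for $\QCoh$ along the affine maps $X_n\to X$, both of which are standard but deserve a line.
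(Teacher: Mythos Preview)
Your proposal is correct and follows essentially the same route as the paper: decompose via \cref{thm:qcoh_gerbe}, apply \cref{thm:GAGA_Morita} to each $\QCoh_d(\mathfrak A)$ using its invertibility (\cref{lem:homogeneous_component_Azumaya}), and reassemble by controlling the finite-support condition across the tower. You are in fact more explicit than the paper on several points it leaves implicit: the base-change identification $\QCoh_d(\mathfrak A)\otimes_{\QCoh(X)}\QCoh(X_n)\simeq\QCoh_d(\mathfrak A_n)$, the reason why $\delta_d(\cF_0)\simeq 0$ forces $\delta_d(\cF_n)\simeq 0$ (vanishing of perfect complexes along nilpotent thickenings), and the symmetric monoidality of the resulting equivalence.
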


\begin{proof}
	Consider the natural map
	\[ \QCoh(\mathfrak A) \longrightarrow \lim_n \QCoh( \mathfrak A_n ) . \]
	Using the decomposition provided by \cref{thm:qcoh_gerbe}, we can rewrite it as
	\[ \prod_{d \in \Z} \QCoh_d(\mathfrak A) \longrightarrow \lim_n \prod_{d \in \Z} \QCoh_d(\mathfrak A_n) . \]
	Consider the diagrams
	\[ \begin{tikzcd}
		\Perf(\mathfrak A) \arrow{r} \arrow{d} & \lim_n \Perf(\mathfrak A_n) \arrow{d} \\
		\prod_{d \in \Z} \Perf_d(\mathfrak A) \arrow{r} & \lim_n \prod_{d \in \Z} \Perf_d(\mathfrak A_n) \ .
	\end{tikzcd} \]
	As a consequence of \cref{thm:qcoh_gerbe}-(2), we see that the vertical arrows are fully faithful.
	On the other hand, \cref{lem:homogeneous_component_Azumaya} implies that $\QCoh_\chi(\mathfrak A)$ is invertible in $\PrLomega_X$.
	In particular, it is smooth and proper and hence \cref{thm:GAGA_Morita} implies that the canonical map
	\[ \QCoh_d(\mathfrak A) \longrightarrow \lim_n \QCoh_d(\mathfrak A_n) \]
	is an equivalence, the limit being computed in $\PrLomega_X$.
	Passing to compact objects and taking the infinite product over all characters of $\bbG_m$, we deduce that the bottom horizontal map in the above square is an equivalence.
	Thus, the top horizontal map is fully faithful.
	We are therefore left to prove essential surjectivity.
	Fix an element $(\cF_n)_{n \geqslant 0} \in \lim_n \Perf(\mathfrak A_n)$.
	Since $X$ is quasi-compact, $\cF_0$ has only finitely many non-zero weights by \cref{thm:qcoh_gerbe}-(3).
	Notice that if $\delta_d(\cF_0) \simeq 0$, then $\delta_d(\cF_n) \simeq 0$ for every $n \geqslant 0$.
	Thus, it follows that
	\[ \cF \coloneqq \lim_{n \geqslant 0} \cF_n \in \prod_{d \in \Z} \Perf_d(\mathfrak A) \]
	has only finitely many non-zero weights.
	Therefore, \cref{thm:qcoh_gerbe}-(3) guarantees that it belongs to $\Perf(\mathfrak A)$.
\end{proof}

\subsection{Derived Azumaya algebra vs.\ $\bbG_m$-gerbes: a dictionary} \label{subsec:dictionary}

Derived Azumaya algebras and $\bbG_m$-gerbes are connected in a rather indirect way: both objects can be used to represent classes of $\rH^2\et(X;\bbG_m)$.
It is nevertheless possible to provide a direct dictionary between the two.
Since the appearance of the first version of this paper, G.\ Nocera and M.\ Pernice made this dictionary explicit in \cite{Nocera_Pernice_Twisted_sheaves}.
Their method converts a derived Azumaya algebra into a $\bbG_m$-gerbe by looking at the stack of its ``positive trivializations'', and Proposition 2.9 in \emph{loc.\ cit.} settles the following statement, that appeared as a conjecture in the first version of this paper (motivated by \cite[Corollary 5.9]{Bergh_Schnurer_Gerbes}):

\begin{conj}
	Let $X$ be a quasi-compact and quasi-separated scheme.
	Let $(\mathfrak A,\alpha)$ and $(\mathfrak A',\alpha')$ be two $\bbG_m$-gerbes on $X$.
	Let $d, d' \in \Z$ be two integers.
	Then there exists a canonical equivalence
	\[ \QCoh_d(\mathfrak A) \otimes_{\QCoh(X)} \QCoh_{d'}(\mathfrak A') \simeq \QCoh_{(d,d')}(\mathfrak A \times_X \mathfrak A') , \]
	where $\mathfrak A \times_X \mathfrak A'$ is considered as $\bbG_m \times \bbG_m$-gerbe.
	Furthermore let $\mathfrak A \star \mathfrak A'$ be $\bbG_m$-gerbe on $X$ classifying the product $\alpha \alpha' \in \rH^2_{\mathrm{\'et}}(X;\bbG_m)$.
	Then pulling back along the canonical projection map of \cite[Construction 3.8]{Bergh_Schnurer_Gerbes}
	\[ \rho_{\alpha,\alpha'} \colon \mathfrak A \times_X \mathfrak A' \longrightarrow \mathfrak A \star \mathfrak A' \]
	yields an equivalence
	\[ \rho_{\alpha,\alpha'}^* \colon \QCoh_{1}(\mathfrak A \star \mathfrak A') \simeq \QCoh_{(1, 1)}(\mathfrak A \times_X \mathfrak A') . \]
\end{conj}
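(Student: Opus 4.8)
The plan is to deduce everything from the character-decomposition machinery of \cref{thm:qcoh_gerbe} together with the box-product statement \cref{prop:box_product_derived_stacks}, reducing all assertions to the case of trivial gerbes, where they become bookkeeping with the weight grading. First I would address the product statement. Since the question is local on $X$ (both sides are $\QCoh(X)$-linear and, by \cref{lem:homogeneous_component_Azumaya}, invertible in $\PrLomega_X$, so an equivalence can be checked smooth-locally on $X$ thanks to \cref{cor:etale_descent_Azumaya}), I may assume $X = \Spec(\bbS)$ and that both $\mathfrak A$ and $\mathfrak A'$ are trivial, i.e.\ $\mathfrak A \simeq \rB\bbG_m \times X$ and $\mathfrak A' \simeq \rB\bbG_m \times X$. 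Then $\mathfrak A \times_X \mathfrak A' \simeq \rB\bbG_m \times \rB\bbG_m \times X$ as a $\bbG_m \times \bbG_m$-gerbe, and by \cref{thm:graded_spectra} (applied twice, or to $\rB\bbG_m \times \rB\bbG_m$) together with \cref{prop:box_product_derived_stacks} we have $\QCoh(\mathfrak A \times_X \mathfrak A') \simeq \Fun(\Z \times \Z, \QCoh(X))$, under which $\QCoh_{(d,d')}(\mathfrak A \times_X \mathfrak A')$ is the full subcategory concentrated in bidegree $(d,d')$ — this is the bigraded analogue of \cref{lem:d_homogeneous_basic_facts}-(3), and I would prove it by the same $t$-exactness computation, noting that the homogeneous-component functor for the product gerbe is the composite of the two one-variable functors $\delta_d$ and $\delta_{d'}$ (using \cref{eg:act_trivial_gerbe} to identify the relevant $\mathrm{act}$ maps with multiplication maps). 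On the other side, $\QCoh_d(\mathfrak A) \simeq \QCoh(X)$ and $\QCoh_{d'}(\mathfrak A') \simeq \QCoh(X)$ via $\mathrm T_{X,d}$ and $\mathrm T_{X,d'}$, so $\QCoh_d(\mathfrak A) \otimes_{\QCoh(X)} \QCoh_{d'}(\mathfrak A') \simeq \QCoh(X)$, and one checks that the canonical box-product map intertwines these identifications with the inclusion of the bidegree-$(d,d')$ part, giving the first equivalence. The $\QCoh(X)$-linearity and naturality of all the functors involved ($\delta_d$, $\mathrm T_{X,d}$, $\boxtimes$) is what licenses the reduction to $X = \Spec(\bbS)$.

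For the second half, recall the projection $\rho_{\alpha,\alpha'}\colon \mathfrak A \times_X \mathfrak A' \to \mathfrak A \star \mathfrak A'$ of \cite[Construction 3.8]{Bergh_Schnurer_Gerbes}: it is a $\bbG_m$-gerbe (the kernel of the banding $\bbG_m \times \bbG_m \to \bbG_m$ given by multiplication is an anti-diagonal copy of $\bbG_m$), and it is compatible with the gradings in the sense that $\rho_{\alpha,\alpha'}^\ast$ carries a $d$-homogeneous sheaf on $\mathfrak A \star \mathfrak A'$ to a $(d,d)$-homogeneous sheaf on $\mathfrak A \times_X \mathfrak A'$. Again the claim is smooth-local on $X$, so I reduce to the trivial case: there $\mathfrak A \star \mathfrak A' = \rB\bbG_m \times X$, $\rho_{\alpha,\alpha'}$ becomes the multiplication map $m\times\id_X\colon \rB\bbG_m\times\rB\bbG_m\times X \to \rB\bbG_m\times X$ (as computed in \cref{eg:act_trivial_gerbe}), and $m^\ast$ sends $\cO(d)$ to $\cO(d,d)$. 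Under the Fun-categories descriptions, $m^\ast$ is restriction along the addition map $\Z\times\Z \to \Z$; restricted to the weight-$1$ line on the target, this lands in the weight-$(1,1)$ line on the source, and the induced functor $\QCoh_1(\mathfrak A\star\mathfrak A') \to \QCoh_{(1,1)}(\mathfrak A\times_X\mathfrak A')$ is, after both sides are identified with $\QCoh(X)$, the identity — hence an equivalence. (Equivalently: the diagonal $\Z \hookrightarrow \Z\times\Z$ meeting the bidegree line $\{(1,1)\}$ is a single point, so restriction is an equivalence onto that fiber.)

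The main obstacle I anticipate is not any single computation but the careful tracking of the $\bbG_m$-actions and bandings through the product and through $\rho_{\alpha,\alpha'}$ — i.e.\ making precise that the homogeneous-component functor for $\mathfrak A \times_X \mathfrak A'$, regarded as a $\bbG_m\times\bbG_m$-gerbe, really is the composite $\delta_d^{(1)}\circ\delta_{d'}^{(2)}$ of the two coordinate functors, and that $\rho_{\alpha,\alpha'}$ is precisely the map that on bandings realizes the multiplication $\bbG_m\times\bbG_m\to\bbG_m$. Once that identification is in place (for which \cref{subsec:inertial_action}, \cref{lem:computing_mu} and \cref{eg:act_trivial_gerbe} supply all the needed tools), everything reduces, via \cref{cor:etale_descent_Azumaya} and $\QCoh(X)$-linearity, to the universal case over $\Spec(\bbS)$, where \cref{thm:graded_spectra} and \cref{prop:box_product_derived_stacks} finish the argument. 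As a closing remark, this is exactly the statement established (for underived qcqs schemes) in \cite[Proposition 2.9]{Nocera_Pernice_Twisted_sheaves} via their stack of positive trivializations; the argument sketched here is the more direct route through the explicit weight decomposition.
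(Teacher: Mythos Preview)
The paper does \emph{not} supply a proof of this statement: it is stated as a conjecture, and the surrounding text explains that it was settled after the first version of the paper by Nocera and Pernice \cite{Nocera_Pernice_Twisted_sheaves}, whose method goes through the stack of positive trivializations of a derived Azumaya algebra. So there is no ``paper's own proof'' to compare against.

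That said, your outline is a reasonable and essentially self-contained alternative route, going directly through the weight decomposition rather than through the Azumaya dictionary. The one point I would flag is the order of operations in your descent argument: to check locally that a map is an equivalence, you first need a \emph{globally defined} comparison functor. Concretely, you should begin by constructing the composite
\[
\QCoh_d(\mathfrak A) \otimes_{\QCoh(X)} \QCoh_{d'}(\mathfrak A') \hookrightarrow \QCoh(\mathfrak A) \otimes_{\QCoh(X)} \QCoh(\mathfrak A') \xrightarrow{\boxtimes} \QCoh(\mathfrak A \times_X \mathfrak A')
\]
over an arbitrary $X$, verify (this can indeed be done locally) that it lands in $\QCoh_{(d,d')}(\mathfrak A \times_X \mathfrak A')$, and only \emph{then} invoke \cref{cor:etale_descent_Azumaya} and \cref{lem:homogeneous_component_Azumaya} to reduce the equivalence check to the trivial case. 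You gesture at this with the phrase ``canonical box-product map'', but it should be made explicit before the localisation step. The same comment applies to the second half: $\rho_{\alpha,\alpha'}^\ast$ is already a global functor, and you should first argue (via compatibility of $\mathrm{act}$ with the multiplication banding, which is where the work you correctly identify as the main obstacle lives) that it sends $\QCoh_1$ into $\QCoh_{(1,1)}$ before passing to the trivial gerbe. With those global constructions in place, the remainder of your argument via \cref{thm:graded_spectra}, \cref{prop:box_product_derived_stacks} and \cref{eg:act_trivial_gerbe} goes through.
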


\bibliographystyle{plain}
\bibliography{dahema}

\end{document}